\date{\today}
\newcommand{\bbC}{{\mathbbm{C}}}
\newcommand{\bbD}{{\mathbb{D}}}
\newcommand{\bbJ}{{\mathbbm{J}}}
\newcommand{\bbR}{{\mathbbm{R}}}
\newcommand{\bbT}{{\mathbbm{T}}}
\newcommand{\bbZ}{{\mathbbm{Z}}}
\newcommand{\CP}{{\mathcal{P}}}
\newcommand{\Hi}{{\mathcal{H}}}
\newcommand{\Ran}{{\mathrm{ran}}}
\newcommand{\sJ}{{\mathscr{J}}}
\newcommand{\SL}{{\mathrm{SL}}}
\newcommand{\fp}{{\mathfrak{p}}}
\newcommand{\ac}{{\mathrm{ac}}}
\renewcommand{\sc}{{\mathrm{sc}}}
\newcommand{\pp}{{\mathrm{pp}}}
\newcommand{\LP}{{\mathrm{LP}}}
\newcommand{\PT}{{\mathrm{PT}}}
\newcommand{\Hd}{{\mathrm{H}}}
\newcommand{\hull}{{\mathrm{hull}}}
\newcommand{\Leb}{{\mathrm{Leb}}}
\newcommand{\orb}{{\mathrm{orb}}}
\newcommand{\per}{{\mathrm{per}}}
\newcommand{\tr}{{\mathrm{Tr}}}
\newtheorem{theorem}{Theorem}[section]
\newtheorem{lemma}[theorem]{Lemma}
\newtheorem{prop}[theorem]{Proposition}
\newtheorem{coro}[theorem]{Corollary}
\theoremstyle{definition}
\newtheorem{question}[theorem]{Question}
\newtheorem*{definition}{Definition}
\newtheorem{example}[theorem]{Example}
\newtheorem{remark}[theorem]{Remark}
\theoremstyle{plain}
\allowdisplaybreaks \numberwithin{equation}{section}
\DeclareMathOperator{\supp}{supp}
\DeclareMathOperator*{\slim}{s-lim}
\newcommand{\set}[1]{\left\{#1\right\}}
\newcommand{\defeq}{{:=}}
\begin{document}

\title[Limit-Periodic Operators]{Spectral Properties of Limit-Periodic Operators}

\begin{abstract}
We survey results concerning the spectral properties of limit-periodic operators. The main focus is on discrete one-dimensional Schr\"odinger operators, but other classes of operators, such as Jacobi and CMV matrices, continuum Schr\"odinger operators and multi-dimensional Schr\"odinger operators, are discussed as well.

We explain that each basic spectral type occurs, and it does so for a dense set of limit-periodic potentials. The spectrum has a strong tendency to be a Cantor set, but there are also cases where the spectrum has no gaps at all. The possible regularity properties of the integrated density of states range from extremely irregular to extremely regular. Additionally, we present background about periodic Schr\"odinger operators and almost-periodic sequences.

In many cases we outline the proofs of the results we present.
\end{abstract}

\author[D.\ Damanik]{David Damanik}
\address{Department of Mathematics, Rice University, Houston, TX~77005, USA}
\email{damanik@rice.edu}
\thanks{D.D.\ was supported in part by NSF grants DMS--1361625 and DMS--1700131.}

\author[J.\ Fillman]{Jake Fillman}
\address{Department of Mathematics, Virginia Polytechnic Institute and State University, 225 Stanger Street, Blacksburg, VA~24061, USA}
\email{fillman@vt.edu}
\thanks{J.F.\ was supported in part by an AMS-Simons travel grant, 2016--2018}

\maketitle

\setcounter{tocdepth}{1}
\tableofcontents

\section{Introduction}

We will talk about Schr\"odinger operators $H$ acting in $\ell^2(\bbZ)$ via
\[
[Hu](n)
=
u(n-1) + u(n+1) + V(n) u(n)
\]
with periodic and limit-periodic potentials. We say that $V$ (or, interchangeably, $H$) is \emph{periodic} if there is some $q \in \bbZ_+$ with $V(n+q) = V(n)$ for every $n$. We say that $V$ (or $H$) is \emph{limit-periodic} if $V$ lies in the closure of the space of periodic potentials, that is, if there exist $V_1, V_2, \ldots$ periodic with
\[
\lim_{n\to\infty} \| V - V_n\|_\infty
=
0.
\]

One might think that such operators would be rather similar to periodic operators, and that many characteristics and results can be ``pushed through'' to the limit using uniform convergence. However, one would be very wrong! Periodic operators always exhibit purely absolutely continuous spectrum of multiplicity two supported on a spectrum which is a finite union of nondegenerate closed intervals, and the associated quantum dynamics are ballistic (i.e.\ a particle subjected to a periodic potential behaves like a free particle). Every single one of these characteristics can be broken in the limit-periodic regime. Concretely, in the limit-periodic regime:
\begin{itemize}
\item The spectrum can be a Cantor set (a dense set of gaps). In fact, the spectrum may have zero Hausdorff dimension;
\item The spectral type may be purely singular continuous or even pure point;
\item The quantum dynamics may be localized in a very strong sense.
\end{itemize}

The study of these operators was begun in the 1980s by work of  Avron-Simon \cite{AS81}, Moser \cite{M81}, and the Soviet school \cite{Chul81, Chul84, egorova, MC84, PT1, PT2}. These classical results focused on the presence of Cantor spectrum and purely absolutely continuous spectral measures. A notable exception is the work \cite{P83} by P\"oschel from that era, which exhibited examples that are uniformly localized. There has been a recent uptick in activity, ushered in by Avila \cite{A09}, which has led to the realization that new phenomena are possible, such as the genericity of purely singular continuous spectrum \cite{A09, DG11a} and the denseness of pure point spectrum \cite{DG16} in the space of all limit-periodic potentials. Another new phenomenon established in a recent paper is the Lipschitz continuity of the integrated density of states in P\"oschel's examples \cite{DF2018}.

The paper is structured as follows. We provide a quick introduction to the periodic case in Section~\ref{s.periodic}. Almost periodic sequences and their hulls are discussed in Section~\ref{s.almostperiodic}. Every limit-periodic sequence is almost-periodic, and those almost-periodic sequences that are limit-periodic can be characterized via a topological property of their hulls: they need to be totally disconnected. Moreover, this discussion also shows that the study of almost-periodic operators fits in the broader class of ergodic operators, which are discussed in Section~\ref{s.ergodic}, along with the standard quantities associated with them: the Lyapunov exponent and the integrated density of states. Section~\ref{s.ac} contains results about limit-periodic operators with purely absolutely continuous spectral measures, supported on thick (Carleson homogeneous) Cantor sets. The genericity of zero-measure Cantor spectrum and purely singular continuous spectral measures is the subject of Section~\ref{s.sc}. Limit-periodic operators with pure point spectrum are presented in Section~\ref{s.pp}, both those that arise in the large coupling regime and those that arise due to local randomness. Section~\ref{s.ids} addresses regularity results for the integrated density of states. While we focus on discrete Schr\"odinger operators in this paper, much of what we say has analogs for related families of operators in mathematical physics, namely: Jacobi matrices, CMV matrices/quantum walks, and continuum Schr\"odinger operators. We describe these operators and some of the known results in Section~\ref{s.other}. Motivated by the known results, we present some interesting open problems in Section~\ref{sec:problems}. Finally, since the hull of a limit-periodic potential is a totally disconnected group, we discuss some characteristics of totally disconnected groups in Appendix~\ref{s.profinite}.

\medskip

\noindent\textbf{Acknowledgment.} Some of this material was presented in a lecture series at the Academy of Mathematics and Systems Science (Chinese Academy of Sciences), Beijing in February 2018. D.D.\ would like to express his gratitude to Zhe Zhou and AMSS/CAS for the kind invitation and extraordinary hospitality and all the lecture series attendees for their interest and participation.

\section{A Crash Course on Periodic Operators}\label{s.periodic}
In the present section, we will review some aspects of the spectral analysis of periodic operators in 1D. Throughout, assume that $V$ is $q$-periodic and that $q \in \bbZ_+$ is chosen to be minimal with respect to this property. We will at least sketch most of the arguments here. With the exception of the discussion of ballistic motion in Section~\ref{ss:ballistic}, the results in this section are textbook material; lucid references include \cite[Chapter~5]{simszego} and \cite[Chapter~7]{teschljacobi}.

The eigenvalue equation for $H$ reads
\begin{equation} \label{eq:diffEq1}
u(n-1) + u(n+1) + V(n) u(n)
=
E u(n)
\end{equation}
where $E$ is a scalar. We shall see presently that this equation has no (nontrivial) solution $u \in \ell^2(\bbZ)$ when $V$ is periodic, so we will need to be somewhat more generous in our interpretation of \eqref{eq:diffEq1}. Namely, we will consider \eqref{eq:diffEq1} for arbitrary $u \in \bbC^\bbZ$, not just $u \in \ell^2(\bbZ)$. Through a standard abuse of notation, we will also write $H u$ for $u \in \bbC^\bbZ$, where $[H u](n)$ is simply defined by the left-hand side of \eqref{eq:diffEq1}.

\subsection{Floquet Theory} \label{ss:floq}

The difference equation \eqref{eq:diffEq1} can be rewritten as a $2\times 2$ matrix recursion:
\begin{equation}\label{eq:diffEq2}
\begin{bmatrix}
u(n+1) \\ u(n)
\end{bmatrix}
=
\begin{bmatrix}
E - V(n) & -1 \\
1 & 0
\end{bmatrix}
\begin{bmatrix}
u(n) \\ u(n-1)
\end{bmatrix}.
\end{equation}
Clearly then, the asymptotic characteristics of $u$ can be encoded by the \emph{monodromy matrix}
\[
\Phi(E)
=
\begin{bmatrix}
E - V(q) & -1 \\
1 & 0
\end{bmatrix}
\begin{bmatrix}
E - V(q-1) & -1 \\
1 & 0
\end{bmatrix}
\cdots
\begin{bmatrix}
E - V(1) & -1 \\
1 & 0
\end{bmatrix}
\]
and the relationship between $(u(1), u(0))^\top$ and the eigenvector(s) of $\Phi(E)$. Since $\Phi \in \SL(2,\bbC)$, the character of its eigenvectors is entirely encoded by its trace. We may characterize the spectrum of $H$ as follows:

\begin{theorem} \label{t:floquet}
If $V$ is $q$-periodic with discriminant $D$, then
\[
\sigma(H_V)
=
\set{E \in \bbC : D(E) \in [-2,2]}
=
\set{E \in \bbR : |D(E)| \leq 2}.
\]
For all $c \in (-2,2)$, all solutions of $D(E) = c$ are real and simple. For $c = \pm2$, all solutions of $D(E) = c$ are real and at most doubly degenerate.
\end{theorem}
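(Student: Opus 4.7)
The plan is to exploit the Floquet apparatus set up in Section~\ref{ss:floq}. Write $D(E) = \tr \Phi(E)$; since each factor of $\Phi(E)$ has determinant one, $\det \Phi(E) = 1$, and the eigenvalues $\lambda_\pm(E)$ of $\Phi(E)$ are the roots of $\lambda^2 - D(E)\lambda + 1 = 0$. These eigenvalues govern the asymptotic behavior of every solution of \eqref{eq:diffEq1} through the recursion \eqref{eq:diffEq2}. For real $E$, $\Phi(E)$ has real entries, so $D(E) \in \bbR$; then $|D(E)| < 2$ gives $\lambda_\pm = e^{\pm i\theta}$ distinct on the unit circle, $|D(E)| > 2$ gives real reciprocal eigenvalues of moduli $>1$ and $<1$, and $|D(E)| = 2$ gives $\lambda_\pm = \pm 1$ with $\Phi(E)$ either $\pm I$ or similar to a nontrivial Jordan block.

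The first main step is to show $\sigma(H) = \{E \in \bbR : |D(E)| \leq 2\}$. Self-adjointness of $H$ on $\ell^2(\bbZ)$ gives $\sigma(H) \subset \bbR$, so I may restrict to real $E$. If $|D(E)| > 2$, the two distinct eigenvalues of $\Phi(E)$ produce linearly independent Floquet solutions $u_\pm$ of $Hu = Eu$, decaying exponentially at $+\infty$ and $-\infty$ respectively. Their constant, nonzero Wronskian gives a Green's function $G(n,m)$ with off-diagonal exponential decay, and straightforward estimates show that $G$ defines a bounded inverse for $H - E$ on $\ell^2(\bbZ)$, placing $E$ in the resolvent set. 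Conversely, if $|D(E)| \leq 2$, every nontrivial solution of $Hu = Eu$ grows at most linearly in $|n|$, and its modulus is quasi-periodic (bounded below on average, by uniqueness for the second-order recursion). Truncating such a solution with a smooth cutoff of radius $N$ yields a Weyl sequence $\psi_N$: the action of $H - E$ is supported in the boundary region while the bulk contributes to $\|\psi_N\|$, and a direct computation gives $\|(H - E)\psi_N\|/\|\psi_N\| \to 0$, so $E \in \sigma(H)$.

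Next, I would pin down the structure of the level sets of $D$. A direct expansion of the product defining $\Phi(E)$ shows that $D(E)$ is a monic polynomial in $E$ of degree $q$, so $D(E) = c$ has exactly $q$ roots counted with multiplicity for every $c \in \bbC$. For $c \in [-2, 2]$, every root $E$ of $D(E) = c$ satisfies $|D(E)| \leq 2$, hence lies in $\sigma(H) \subset \bbR$ by the first step. For $c \in (-2, 2)$, one further invokes the Floquet-Bloch decomposition: writing $c = 2\cos\theta$ with $\theta \in (0, \pi)$, the roots are precisely the eigenvalues of the $q \times q$ self-adjoint matrix $H_\theta$ corresponding to quasi-periodic boundary conditions $u(n+q) = e^{i\theta} u(n)$. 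Since $\Phi(E)$ has the distinct eigenvalues $e^{\pm i\theta}$ at each such $E$, its $e^{i\theta}$-eigenspace is one-dimensional, which forces each eigenvalue of $H_\theta$ to be simple; as $H_\theta$ is $q \times q$ and has the $q$ roots of $D(E) = c$ as its eigenvalues, these $q$ roots must be distinct real numbers. For $c = \pm 2$, the two eigenvalues of $\Phi(E)$ coincide, and a double root at $E$ arises precisely when $\Phi(E) = \pm I$; in any case the total count $q$ bounds the multiplicity of any root by $2$.

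The main technical obstacle is the Weyl sequence construction in the second paragraph: the lower bound $\|\psi_N\|^2 \gtrsim N$ requires controlling the averaged size of the Floquet solutions, which is clean when $|D(E)| < 2$ (bounded modulus, quasi-periodic on each residue class mod $q$) but requires slightly more care at the band edges $|D(E)| = 2$, where a Jordan solution may grow linearly. Once this is in hand, the spectral characterization and the Floquet-Bloch analysis of simplicity follow directly from the $\SL(2,\bbR)$-structure of $\Phi(E)$ and the degree of $D$.
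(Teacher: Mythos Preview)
Your argument for the spectral characterization $\sigma(H) = \{E \in \bbR : |D(E)| \le 2\}$ is essentially the paper's: Weyl sequences from bounded (or at most linearly growing) Floquet solutions when $|D(E)| \le 2$, and an explicit Green function built from the exponentially decaying Floquet solutions when $|D(E)| > 2$.

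For simplicity of the roots of $D(E) = c$ with $c \in (-2,2)$ you take a genuinely different route. The paper argues by Rouch\'e: a double root at $E_0$ would force nearby non-real solutions of $D(z) \in (-2,2)$, contradicting $\sigma(H) \subset \bbR$. You instead identify the roots with the eigenvalues of the cutoff matrix $H_\theta$ (for $c = 2\cos\theta$, $\theta \in (0,\pi)$) and observe that each eigenvalue of $H_\theta$ is simple because the $e^{i\theta}$-eigenspace of $\Phi(E)$ is one-dimensional; since $H_\theta$ is $q\times q$ with $q$ simple eigenvalues and $D - c$ has degree $q$, the roots are simple. This is correct and self-contained; it is the ``direct'' argument the paper alludes to after Theorem~\ref{t:bloch}. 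The Rouch\'e approach is slightly more economical here because it avoids introducing $H_\theta$ before Section~\ref{ss:bloch}, but your route has the advantage of making the Bloch picture explicit from the start.

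There is, however, a real gap at $c = \pm 2$. The sentence ``in any case the total count $q$ bounds the multiplicity of any root by $2$'' does not follow from anything you have written: degree $q$ places no a priori bound of $2$ on any individual multiplicity. Your Bloch argument also does not help here, since for $\theta \in \{0,\pi\}$ the eigenvalues $e^{\pm i\theta}$ of $\Phi(E)$ coincide and the one-dimensionality of the Floquet eigenspace is exactly what fails. The paper closes this gap by Rouch\'e again: if $D - (\pm 2)$ vanished to order $\ge 3$ at $E_0$, then for $c$ slightly inside $(-2,2)$ the equation $D(z) = c$ would have $\ge 3$ roots (in $\bbC$) near $E_0$, but the local real picture (a cubic or higher-order tangency) provides at most one or two real ones, forcing non-real roots and contradicting reality. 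You need either this argument or an equivalent one to finish.
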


\begin{proof}[Proof Sketch]

Since $H_V$ is self-adjoint, we focus on real $E$. For each $E \in \bbR$, there are four possibilities for $\Phi(E)$:
\medskip

\begin{enumerate}
\item $|\tr\,\Phi(E)| < 2$: Then, the powers of $\Phi(E)$ are uniformly bounded, so all solutions of \eqref{eq:diffEq1} are bounded;
\medskip

\item $|\tr\,\Phi(E)| = 2$ and $\Phi(E) \notin \{\pm I\}$: There is a Jordan anomaly. One solution of \eqref{eq:diffEq1} is bounded, and any linearly independent solution grows linearly;
\medskip

\item $\Phi(E) = \pm I$. Clearly, all solutions of \eqref{eq:diffEq1} are bounded;
\medskip
\item $|\tr\, \Phi(E)| > 2$. There are linearly independent solutions $u_+$ and $u_-$ of \eqref{eq:diffEq1}  with the property that $u_\pm$ decays exponentially at $\pm \infty$ and grows exponentially at $\mp \infty$.
\end{enumerate}

In cases (1), (2), and (3), one can use a bounded solution to construct a Weyl sequence, and hence $E \in \sigma(H)$. In case (4), we have two special solutions $u_\pm$, defined as follows: let $(u_+(1), u_+(0))^\top$ be a contracting eigenvector of $\Phi(E)$ and let $(u_-(1), u_-(0))^\top$ be an expanding eigenvector of $\Phi(E)$, and use \eqref{eq:diffEq2} to extend these initial data to produce solutions of \eqref{eq:diffEq1}. Then, $u_\pm$ decays exponentially at $\pm \infty$ and one can use these solutions to construct a Green function for $H$. Concretely, one can check that
\[
\langle \delta_n, R \delta_m\rangle
=
\frac{u_- (n \wedge m) u_+(n \vee m)}{u_-(0)u_+(1) - u_+(0)u_-(1)}
\]
($n \wedge m = \min(n,m)$ and $n \vee m = \max(n,m)$) defines a bounded operator with $R(H-E) = (H-E)R =I$ by Schur's criterion.

In view of the discussion above, it follows that any solution of $D(E) = c$ with $c\in [-2,2]$ must be real, since such an $E$ belongs to the spectrum of $H$ and $H$ is self-adjoint. The multiplicity statements follow from Rouch\'e's theorem. Concretely, if $D(E) - c$ has a double (or higher) root at $E \in \bbR$ for some $c \in (-2,2)$, then Rouch\'e's theorem implies the existence of non-real $z$ near $E$ such that $D(z) \in (-2,2)$, contradicting reality of such solutions. Similarly, if $D(E)-c$ vanishes to third order or higher for some choice of $c \in \{\pm2\}$, then Rouch\'e's theorem implies the existence of nearby non-real $z$ with $D(z) \in [-2,2]$.
\end{proof}

Thus, we see that the spectrum of $H$ consists of $q$ nondegenerate closed intervals, which we call the \emph{bands}:
\[
\sigma(H)
=
\bigcup_{j=1}^q [\alpha_j,\beta_j].
\]
Here, $\alpha_1 < \beta_1 \leq \alpha_2 < \cdots \leq \alpha_q < \beta_q$ denotes an enumeration of the solutions of $D(E) = \pm 2$, counted with multiplicity. Each of the closed intervals $[\alpha_j,\beta_j]$ may be obtained as the closure of a connected component of $D^{-1}((-2,2))$. In the event that $\beta_j < \alpha_{j+1}$, we say that the $j$th \emph{gap} is open. Otherwise, we say that we have a \emph{closed gap} at the point $\alpha_j = \beta_{j+1}$. Notice that one has a closed gap at $c$ if and only if $D^2-4$ vanishes to second order at $c$, that is, $D(c) = \pm 2$ and $D'(c) = 0$.

\subsection{Bloch Waves} \label{ss:bloch}
If $E \in \sigma(H)$, then $D(E) \in [-2,2]$. Writing $D(E) = 2\cos\theta$ for some $\theta \in [0,\pi]$, we see that $\Phi(E)$ has eigenvalues $e^{\pm i\theta}$. Using an eigenvector corresponding to the eigenvalue $e^{i\theta}$ as an initial condition, we may generate a solution $u_+$ to $Hu = Eu$ so that
\begin{equation} \label{eq:floquetCond}
u_+(n+q)
=
e^{i \theta} u_+(n).
\end{equation}
Similarly, using the eigenvector for $e^{-i\theta}$ as an initial condition, one obtains a solution $u_-$ satisfying $u_-(n+q)= e^{-i\theta}u_-(n)$. On can encode solutions of $Hu=Eu$ satisfying a condition like \eqref{eq:floquetCond} in terms of $q\times q$ matrices with suitable boundary conditions. Concretely, define
\[
H_{\theta}
=
H_{\theta,q}
=
\begin{bmatrix}
V_1 & 1 &&& e^{-i\theta} \\
1 & V_2 & 1 \\
 & \ddots & \ddots & \ddots \\
 && 1 & V_{q-1} & 1 \\
 e^{i\theta} &&& 1 & V_q
\end{bmatrix}.
\]

\begin{theorem} \label{t:bloch}
If $V$ is $q$-periodic, we have
\begin{equation} \label{eq:blochSpecChar}
\sigma(H)
=
\bigcup_{\theta \in [0,\pi]} \sigma(H_\theta).
\end{equation}
Moreover, the discriminant and the characteristic polynomials are related via
\begin{equation} \label{eq:discriminantCharPoly}
\det(E - H_\theta)
=
D(E) - 2\cos\theta.
\end{equation}
\end{theorem}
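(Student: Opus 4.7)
My plan is to establish the spectral identity \eqref{eq:blochSpecChar} first via a Floquet-type boundary-value correspondence, and then to derive the polynomial identity \eqref{eq:discriminantCharPoly} by comparing two monic polynomials of the same degree with matching roots.

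For \eqref{eq:blochSpecChar}, the key observation is that a vector $u \in \bbC^q$ extends uniquely to a sequence $\tilde u \in \bbC^\bbZ$ obeying the Floquet condition $\tilde u(n+q) = e^{i\theta} \tilde u(n)$ via the rule $\tilde u(n + kq) = e^{ik\theta} u(n)$ for $n \in \{1,\ldots,q\}$ and $k \in \bbZ$. A direct check shows that $H_\theta u = Eu$ is equivalent to $[H \tilde u](n) = E \tilde u(n)$ for every $n \in \bbZ$, with the corner entries $e^{\pm i\theta}$ of $H_\theta$ being precisely what is required for the three-term recursion to close up at $n = 1$ and $n = q$. Consequently $E$ is an eigenvalue of $H_\theta$ iff $e^{i\theta}$ is an eigenvalue of $\Phi(E)$, and since $\det \Phi(E) = 1$, the characteristic polynomial of $\Phi(E)$ is $\lambda^2 - D(E)\lambda + 1$, so this occurs iff $D(E) = 2\cos\theta$. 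Taking the union over $\theta \in [0,\pi]$ sweeps out $\{E \in \bbR : D(E) \in [-2,2]\}$, which coincides with $\sigma(H)$ by Theorem~\ref{t:floquet}.

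For \eqref{eq:discriminantCharPoly}, both sides are polynomials in $E$. The left-hand side is monic of degree $q$ as the characteristic polynomial of a $q \times q$ matrix, and the right-hand side is monic of degree $q$ by a short induction showing that the $(1,1)$-entry of the product $T_n \cdots T_1$ is a monic polynomial of degree $n$ in $E$ while the $(2,2)$-entry has degree at most $n-2$, so that $D(E) = \tr \Phi(E)$ is monic of degree $q$. The eigenvalue characterization in the previous paragraph shows that the two polynomials share the same zero set for each fixed $\theta$. For $\theta \in (0,\pi)$, Theorem~\ref{t:floquet} guarantees that $D(E) - 2\cos\theta$ has $q$ distinct simple real roots, and two monic degree-$q$ polynomials sharing exactly these $q$ roots must coincide. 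Since both sides depend continuously on $\theta$, the identity extends to $\theta \in \{0,\pi\}$ as well.

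The main obstacle in this plan is the careful bookkeeping in the first step, where one must verify that the Floquet-extended sequence $\tilde u$ satisfies the three-term recursion at the boundary indices $n = 1$ and $n = q$: the corner entries of $H_\theta$ and the phase $e^{\pm i\theta}$ arising from the Floquet extension must line up exactly, and the verification at indices outside $\{1,\ldots,q\}$ relies on $q$-periodicity of $V$ together with the multiplicative nature of the Floquet phase. An alternative purely algebraic route to \eqref{eq:discriminantCharPoly} that bypasses the polynomial comparison is a direct Leibniz expansion of $\det(E - H_\theta)$: the permutations with nonzero contribution split into three families---tridiagonal ones (recovering the $(1,1)$-entry of $\Phi(E)$), those swapping $1 \leftrightarrow q$ while acting tridiagonally on $\{2,\ldots,q-1\}$ (recovering minus the $(2,2)$-entry of $\Phi(E)$), and the two cyclic shifts $i \mapsto i \pm 1 \pmod q$ (whose contributions sum to $-2\cos\theta$)---so that the first two together yield $\tr \Phi(E) = D(E)$.
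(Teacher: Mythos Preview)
Your argument is correct and follows essentially the same route as the paper's proof sketch: establish the Floquet eigenvector correspondence between $H_\theta$ and $\Phi(E)$ to get \eqref{eq:blochSpecChar}, then match two monic degree-$q$ polynomials with the same (simple, for $\theta\in(0,\pi)$) roots to obtain \eqref{eq:discriminantCharPoly}. The only cosmetic difference is that you pass to $\theta\in\{0,\pi\}$ by continuity whereas the paper fixes $E$ and invokes analyticity in $\theta$ to extend to all $\theta$; both work, and your added justification that $D$ is monic of degree $q$ and your Leibniz-expansion aside are pleasant extras not in the paper.
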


\begin{proof}[Proof Sketch]
Given $E \in \sigma(H)$, write $D(E) = 2\cos\theta$ as before, and define $u_\pm$ as above.
One can then check that $(u_+(1), u_+(2), \ldots, u_+(q))^\top$ is an eigenvector of $H_\theta$ with eigenvalue $E$. On the other hand, if $E$ is an eigenvalue of $H_\theta$, an eigenvector of $H_\theta$ can be extended to a solution of $Hu = Eu$ satisfying \eqref{eq:floquetCond}. Thus, $e^{i\theta}$ is an eigenvalue of $\Phi(E)$, which implies $D(E) = 2\cos\theta$. This discussion proves \eqref{eq:blochSpecChar}.

Thus, for fixed $\theta$, $D(E) - 2\cos(\theta)$ and $\det(E - H_\theta)$ are monic polynomials in $E$ with the same degree and the same roots. For $\theta \in (0,\pi)$, all roots of $D(E) - 2\cos\theta$ are simple by Theorem~\ref{t:floquet}, so \eqref{eq:discriminantCharPoly} follows for all $E \in \bbC$ and all $\theta \in (0,\pi)$. Then, fixing $E$, both sides of \eqref{eq:discriminantCharPoly} define entire functions of $\theta$ that agree for $\theta \in (0,\pi)$ and hence must agree everywhere.
\end{proof}

Let us note a couple of pleasant consequences of the foregoing result. First of all, since $D(E) - c$ can only vanish to first order for $c \in (-2,2)$, it follows from \eqref{eq:discriminantCharPoly} that $H_\theta$ has simple spectrum for $\theta \in (0,\pi)$. One can also prove this directly (cf.\ \cite[Section~5.3]{simszego}). We can also read off from \eqref{eq:discriminantCharPoly} that the spectrum of $H$ has a closed gap at energy $E_0$ with $D(E_0) = 2$ (resp.\ $D(E_0) = -2$) if and only if all solutions to $Hu = E_0 u$ are periodic (resp.\ anti-periodic).
\bigskip

One can formulate Theorem~\ref{t:bloch} in terms of direct integrals in a natural manner. Concretely, consider the space
\begin{align*}
\mathcal{H}_q
& \defeq
L^2([0,2\pi), \bbC^q) \\
& =
\set{ \vec{f}:[0,2\pi) \to \bbC^q : \int_0^{2\pi} \|\vec{f}(\theta)\|_{\bbC^q}^2 < \infty }.
\end{align*}
Naturally, $\Hi_q$ is a Hilbert space with respect to the inner product
\[
\langle f, g \rangle_{\Hi_q}
=
\int_0^{2\pi} \langle f(\theta), g(\theta) \rangle_{\bbC^q} \, \frac{d\theta}{2\pi}.
\]
This space is sometimes written as a direct integral, especially in the physics literature:
\[
\mathcal{H}_q
=
\int^\oplus_{[0,2\pi)} \bbC^q \frac{d\theta}{2\pi}.
\]
We may define a linear operator $\mathcal F: \ell^2(\bbZ) \to \Hi_q$ by $\mathcal F: u \mapsto \widehat u$, where
\begin{equation} \label{eq:modqFourierDef}
\widehat u_j(\theta)
=
\sum_{\ell \in \bbZ} u_{j + \ell q} e^{-i\ell\theta},
\quad
\theta \in [0,2\pi), \; 1 \le j \le q.
\end{equation}
Of course, $\mathcal F$ is initially only defined on $\ell^1(\bbZ)$, but it admits a unique extension to a unitary operator on $\ell^2(\bbZ)$ by Parseval's formula.

\begin{lemma}
The map $\mathcal F$ extends to a unitary operator from $\ell^2(\bbZ)$ to $\Hi_q$; its inverse maps $f \in \Hi_q$ to $\check f \in \ell^2(\bbZ)$, defined by
\[
\check f_{j + \ell q}
=
\int_0^{2\pi} e^{i\ell \theta} f_j(\theta) \, \frac{d\theta}{2\pi},
\quad 1 \le j \le q, \; \ell \in \bbZ.
\]
\end{lemma}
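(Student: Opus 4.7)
The plan is to reduce the claim to the classical Plancherel theorem for Fourier series on $\bbZ$, applied separately to each residue class modulo $q$.

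First, I would decompose $\ell^2(\bbZ)$ according to residues modulo $q$. The map
\[
\Psi : \ell^2(\bbZ) \to \bigoplus_{j=1}^q \ell^2(\bbZ),
\qquad
u \mapsto \bigl( (u_{j+\ell q})_{\ell \in \bbZ} \bigr)_{j=1}^q,
\]
is unitary, simply because $\{j + q\bbZ : 1 \le j \le q\}$ partitions $\bbZ$, so that
\[
\|u\|_{\ell^2(\bbZ)}^2
=
\sum_{j=1}^q \sum_{\ell \in \bbZ} |u_{j+\ell q}|^2.
\]

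Second, I would invoke the standard $L^2$ theory of Fourier series on $\bbZ$: for each fixed $j$, the map $(a_\ell)_{\ell\in\bbZ} \mapsto \sum_\ell a_\ell e^{-i\ell\theta}$, initially defined on $\ell^1(\bbZ)$ where the sum converges absolutely and uniformly, extends by Parseval to a unitary operator from $\ell^2(\bbZ)$ onto $L^2([0,2\pi), d\theta/(2\pi))$, with inverse $f \mapsto \bigl( \int_0^{2\pi} e^{i\ell\theta} f(\theta) \, d\theta/(2\pi) \bigr)_\ell$. Composing $\Psi$ with the $q$-fold direct sum of this Fourier isomorphism yields exactly the map $\mathcal F$ in \eqref{eq:modqFourierDef} and identifies $\bigoplus_{j=1}^q L^2([0,2\pi), d\theta/(2\pi))$ with $\Hi_q$, so $\mathcal F$ is unitary onto $\Hi_q$.

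Third, I would read off the inverse formula. Under the identifications above, the adjoint/inverse of the $j$th Fourier component sends $f_j \in L^2([0,2\pi), d\theta/(2\pi))$ to the sequence whose $\ell$th entry is $\int_0^{2\pi} e^{i\ell\theta} f_j(\theta) \, d\theta/(2\pi)$, and then $\Psi^{-1}$ places this entry at position $j+\ell q$ in $\ell^2(\bbZ)$. This is exactly the claimed formula for $\check f$.

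There is no real obstacle here; the statement is bookkeeping on top of Plancherel. The only minor point requiring attention is that the $j$-index runs over $\{1,\dots,q\}$ rather than $\{0,\dots,q-1\}$, but this is cosmetic, and initial convergence of \eqref{eq:modqFourierDef} on $\ell^1(\bbZ)$ is trivial since each $\widehat u_j$ is then the Fourier series of an absolutely summable sequence.
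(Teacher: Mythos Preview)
Your proof is correct and is essentially the same as the paper's, which observes that $\mathcal F$ sends the standard orthonormal basis $\{\delta_{j+\ell q}\}$ of $\ell^2(\bbZ)$ to the orthonormal basis $\{\varphi_{j,\ell}\}$ of $\Hi_q$ given by $\varphi_{j,\ell}(\theta) = e^{-i\ell\theta}\vec e_j$. Your factorization through the residue-class decomposition $\Psi$ and the $q$-fold direct sum of the scalar Fourier isomorphism is just a repackaging of the same observation, with the completeness of $\{\varphi_{j,\ell}\}$ in $\Hi_q$ corresponding to the surjectivity of each scalar Fourier transform.
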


\begin{proof}
For each $1 \leq j \leq q$ and each $\ell \in \bbZ$, we define $\varphi_{j,\ell} \in \Hi_q$ by $\varphi_{j,\ell} = \widehat \delta_{j + \ell q}$, that is:
\[
\varphi_{j,\ell}(\theta)
=
e^{-i \ell \theta} \vec e_j,
\]
where $\vec e_1, \ldots, \vec e_q$ denotes the standard basis of $\bbC^q$. It is easy to see that $\{\varphi_{j,\ell} : 1 \le j \le q, \; \ell \in \bbZ\}$ is an orthonormal basis of $\Hi_q$. Armed with this knowledge, all claims in the lemma are straightforward.
\end{proof}

For any operator $B$ on $\ell^2(\bbZ)$, we denote by $\widehat B$ its action on Fourier space, that is, $\widehat B = \mathcal F B \mathcal F^{-1}$. Using the definition of $\mathcal{F}$, we see that $\widehat H$ acts as a multiplication operator on $\Hi_q$. That is, for any $f \in \Hi_q$, one has
\[
\left( \widehat H f \right)(\theta)
=
H_\theta f(\theta) \text{ for Lebesgue almost every } \theta \in [0,2\pi).
\]
Thus, even though $H$ has no eigenvalues and no point spectrum, we still colloquially say that $\mathcal{F}$ ``diagonalizes'' $H$. If we denote the eigenvalues of $H_\theta$ as $\lambda_1(\theta)  \leq \cdots \leq \lambda_q(\theta)$, then,
\[
\sigma(H)
=
\bigcup_{q-j \text{ even}} [\lambda_j(\pi),\lambda_j(0)]
\cup
\bigcup_{q-j \text{ odd}} [\lambda_j(0), \lambda_j(\pi)].
\]
This point of view takes the problem of determining the spectrum and replaces a numerically unstable problem (polynomial root-finding) and replaces it with a very stable procedure (computing eigenvalues of Hermitian matrices). Concretely, one can completely determine the spectrum by diagonalizing $H_0$ and $H_\pi$. There are fast, stable numerical algorithms for computing eigenvalues of finite tridiagonal Hermitian matrices; however, $H_0$ and $H_\pi$ are not tridiagonal, but rather ``tridiagonal with corner entries'' which can lead to numerical bottlenecks with a na\"ive implementation leading to needlessly performing $O(q^3)$ floating-point operations. However, conjugating $H_0$ and $H_\pi$ by a breadth-first permutation matrix creates penta-diagonal matrices which can be reduced to tridiagonal form very efficiently (in $O(q^2)$ operations). See \cite{PEF2015} for more details on this point of view.

\subsection{The Density of States}

Define a sequence of measures by
\begin{equation} \label{eq:DOSfirstdef}
\int g \, dk_N
=
\frac{1}{N} \tr(P_N \, g(H) \, P_N^*),
\end{equation}
for Borel sets $B$, where $P_N$ denotes projection onto coordinates $\{0,1,\ldots,N-1\}$. We will prove that $dk_N$ has a weak limit, $dk$, as $N\to\infty$, which we will call $dk$ the \emph{density of states measure} (DOS) for $H$.  The accumulation function of the DOS is called the \emph{integrated density of states} (IDS) and is denoted by
\[
k(E)
=
\int \chi_{(-\infty,E]} \, dk.
\]

\begin{theorem} \label{t.periodic.ids}
The DOS of $H$ exists. That is, $dk = \lim dk_N$ exists, where the limit is taken with respect to the weak topology. Denoting $\Sigma = \sigma(H)$, the IDS of $H$ is differentiable away from the edges of open gaps of $\Sigma$ {\rm(}i.e.\ on $\bbR \setminus \partial \Sigma${\rm)}; more explicitly, if $V$ is $q$-periodic, we have
\begin{equation}  \label{e.periodic.ids}
\frac{dk}{dE}(E_0)
=
\frac{|D'(E_0)|}{\pi q \sqrt{4- D(E_0)^2} }
=
\frac{1}{\pi q} \left| \frac{d\theta}{dE}(E_0) \right|,
\end{equation}
whenever $D(E_0) \in (-2,2)$, where $\theta = \theta(E)$ is chosen continuously so that
\begin{equation}\label{eq:pertheta:def}
2 \cos(\theta) = D(E),
\quad
E \in \Sigma.
\end{equation}
If $E_0$ is a closed gap of $\Sigma$, then
\begin{equation} \label{e.periodic.ids.closedgap}
\frac{dk}{dE}(E_0)
=
\lim_{E \to E_0} \frac{|D'(E)|}{\pi q \sqrt{4- D(E)^2} };
\end{equation}
in particular, the limit on the right-hand side exists and is finite. Otherwise, when $E_0 \in \partial \Sigma$, then $E_0$ borders an open gap of $\Sigma$, and $dk/dE$ diverges at the rate $|E-E_0|^{-1/2}$ as $E \in \Sigma$ approaches $E_0$. Moreover, if $B=[\alpha,\beta]$ is any band of the spectrum,
\begin{equation} \label{eq:per:bandmeas}
\int_B \! dk
=
k(\beta)-k(\alpha)
=
\frac{1}{q}.
\end{equation}
\end{theorem}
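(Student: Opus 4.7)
\medskip

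The plan is to exploit the direct integral decomposition from Theorem~\ref{t:bloch} and the translation invariance of $H$ modulo $q$ to identify the candidate limit measure, and then carry out a change of variables from $\theta$ to $E$ to extract the explicit formula. Concretely, for every continuous $g$ I first observe that $a_g(n) \defeq \langle \delta_n, g(H) \delta_n\rangle$ is $q$-periodic in $n$: if $S$ denotes the shift $[Su](m) = u(m-q)$, then $SHS^{-1} = H$ because $V$ is $q$-periodic, hence $S g(H) S^{-1} = g(H)$, which forces $a_g(n+q) = a_g(n)$. A standard Cesaro argument then gives
\[
\int g \, dk_N
=
\frac{1}{N} \sum_{n=0}^{N-1} a_g(n)
\xrightarrow[N \to \infty]{}
\frac{1}{q} \sum_{n=1}^{q} a_g(n).
\]
Using the unitary $\mathcal{F}$ of the preceding lemma, the right-hand side is identified with $\frac{1}{q} \int_0^{2\pi} \tr(g(H_\theta)) \, \frac{d\theta}{2\pi}$, by unpacking the action of $\widehat H$ as multiplication by $H_\theta$ on $\mathcal{H}_q$. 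Since this holds for every continuous $g$, the weak limit $dk$ exists and admits the representation
\begin{equation} \label{e:plan:diformula}
\int g \, dk
=
\frac{1}{q} \int_0^{2\pi} \sum_{j=1}^q g(\lambda_j(\theta)) \, \frac{d\theta}{2\pi}.
\end{equation}

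\medskip

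With \eqref{e:plan:diformula} in hand, the formula \eqref{e.periodic.ids} follows from a change of variables. Within the interior of each band $B = [\alpha_j,\beta_j]$, the map $\theta \mapsto 2\cos\theta$ parametrizes $D(E)$, and by the simplicity statement in Theorem~\ref{t:floquet} the corresponding branch $E = \lambda_j(\theta)$ is a $C^1$ homeomorphism from $[0,\pi]$ onto $B$. Exactly one eigenvalue $\lambda_j(\theta)$ lies in $B$ for each $\theta \in (0,\pi)$, and the symmetry $H_{2\pi-\theta} = \overline{H_\theta}$ gives a second $\theta \in (\pi,2\pi)$ producing the same $E$. Implicit differentiation of $2\cos\theta = D(E)$ yields $|d\theta/dE| = |D'(E)|/\sqrt{4 - D(E)^2}$ on the interior of each band, and substituting into \eqref{e:plan:diformula} produces \eqref{e.periodic.ids} together with \eqref{eq:per:bandmeas}, since integrating $1$ on $B$ gives $\frac{1}{q} \cdot \int_0^{2\pi} \mathbb 1 \, \frac{d\theta}{2\pi} = \frac{1}{q}$.

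\medskip

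The main obstacle is the analysis at the band edges, where both the numerator and the denominator in \eqref{e.periodic.ids} may vanish. For this I would Taylor expand $D$ near each $E_0$ with $D(E_0) \in \{\pm 2\}$ and use the multiplicity bounds from Theorem~\ref{t:floquet}. At the edge of an open gap, we have $D(E_0) = \pm 2$ and $D'(E_0) \neq 0$, so $4 - D(E)^2 = -(D(E)-D(E_0))(D(E)+D(E_0))$ vanishes to first order in $E - E_0$, while $|D'(E)|$ stays bounded away from zero; this gives the $|E - E_0|^{-1/2}$ blow-up. At a closed gap, $D^2-4$ vanishes to exactly second order at $E_0$, so $D(E_0) = \pm 2$, $D'(E_0) = 0$, and $D''(E_0) \neq 0$; expanding $D(E) - D(E_0) \sim \tfrac{1}{2} D''(E_0)(E-E_0)^2$ and $D'(E) \sim D''(E_0)(E-E_0)$ shows that the singular factors cancel, and the limit in \eqref{e.periodic.ids.closedgap} equals $\sqrt{|D''(E_0)|/(2|D(E_0)|\pi q^2)}$ up to a bounded factor, which is finite. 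Assembling these pieces across all bands and gaps completes the proof.
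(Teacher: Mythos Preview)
Your argument is correct but proceeds along a different line than the paper's. The paper works with the periodic finite-volume approximants $H_m^{\per}$ on $[1,mq]$, identifies their eigenvalues explicitly as the solutions of $D(E)=2\cos(2\pi j/m)$, and reads off the weak limit of $dk_m^{\per}$ from a Riemann-sum argument; it then quotes the (standard but unproved) fact that $dk_N$ and $dk_m^{\per}$ share the same weak limit. You instead exploit the $q$-periodicity of the diagonal $n\mapsto\langle\delta_n,g(H)\delta_n\rangle$ to get convergence of $dk_N$ directly, and then use the Bloch direct integral from Theorem~\ref{t:bloch} to rewrite the limit as $\frac{1}{q}\int_0^{2\pi}\tr g(H_\theta)\,\frac{d\theta}{2\pi}$. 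Your route is arguably cleaner: it bypasses the periodic-vs-free boundary comparison entirely and lands on the representation \eqref{e.periodic.ids} via the same change of variables $2\cos\theta=D(E)$. The paper's route, on the other hand, makes the finite-volume eigenvalue structure concrete, which is sometimes useful in its own right. Your edge analysis (Taylor expansion of $D$ at $E_0$ with $D(E_0)=\pm2$, distinguishing $D'(E_0)\neq 0$ from $D'(E_0)=0$, $D''(E_0)\neq0$) matches the paper's reasoning exactly; the explicit constant you write for the closed-gap limit is slightly garbled, but the finiteness claim, which is all the theorem asserts, is correct.
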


\begin{proof}
For $m \in \bbZ_+$, consider $H_{m}^{\per}$, the restriction of $H$ to $[1,mq]$ with periodic boundary conditions, i.e.,
$$
H_{m}^{\per} =
\begin{bmatrix}
V(1) & 1    &        &        & 1 \\
1    & V(2) & 1      &        &   & \\
     & \ddots & \ddots & \ddots &   & \\
     &      & 1 & V(mq-1) & 1 & \\
1    &      &        & 1      & V(mq)
\end{bmatrix},
$$
and denote the corresponding eigenvalues by $E_{m,1}^{\per} \leq \cdots \leq E_{m,mq}^{\per}$ (notice that eigenvalues of $H_m^{\per}$ may occur with multiplicity two).

For $m \in \bbZ_+$, consider the probability measure
$$
dk_m^{\per}
=
\frac{1}{mq} \sum_{j=1}^{mq} \delta_{E_{m,j}^{\per}}
$$
defined by uniformly distributing point masses on the eigenvalues of $H_m^{\per}$. One can easily check that $E$ is an eigenvalue of $H_m^{\per}$ if and only if 1 is an eigenvalue of $[\Phi(E)]^m$, which holds if and only if the monodromy matrix $\Phi(E)$ has an $m$th root of unity as an eigenvalue.  Thus, $E$ is an eigenvalue of $H_m^{\per}$ if and only if
$$
D(E) = 2 \cos \left( \frac{2 \pi j}{m} \right)
$$
for some integer $ 0 \leq j \leq m/2 $.  Notice that any such eigenvalue with $0 < j < m/2$ is necessarily of multiplicity two, while the eigenvalues corresponding to $j=0$ and $j=m/2$ (for even $m$) can have multiplicity one or two.

Let $B_n$ denote the $n$th band of $ H $ and $D_n$ the restriction of $ D $ to $ B_n $ so that $D_n^{-1}$ is a continuous map $[-2,2] \to B_n$. Given a continuous compactly supported function $g$, the previous discussion implies
$$
\lim_{m\to\infty}
\int_{B_n} g(E) \, dk_m^\per(E) = \frac{1}{\pi q} \int_0^{\pi} g \left( D_n^{-1} ( 2 \cos(\theta) ) \right) \, d\theta.
$$
From the existence of the weak limit of $dk_m^\per$, one may deduce that $dk_N$ converges weakly and to the same limit. From this and the foregoing work, the first equality in \eqref{e.periodic.ids} follows by using the change of variables suggested in \eqref{eq:pertheta:def} and keeping track of the sign of $D'$; the second is a consequence of the definition of $\theta$. This also shows that $dk/dE$ diverges as the inverse square root at borders of open gaps, since $D'(E_0) \neq 0$ whenever $E_0$ borders an open gap of $\Sigma$ (which may be seen from \eqref{eq:discriminantCharPoly}). Now, if $E_0$ is a closed gap of $\Sigma$, then the limit in \eqref{e.periodic.ids.closedgap} exists since $D'$ vanishes linearly at $E_0$ and $4-D^2$ vanishes quadratically at $E_0$.
\end{proof}

\subsection{Ballistic Motion} \label{ss:ballistic}

Let us consider the position operator:
\[
[X\psi]_n
=
n \psi_n,
\]
which is self-adjoint on $D(X) = \set{\psi \in \ell^2(\bbZ) : X\psi \in \ell^2(\bbZ)}$. We are interested in the Heisenberg evolution when $H$ is periodic:
\[
X(t)
=
e^{itH} X e^{-itH},
\quad
t \in \bbR.
\]
One can check that $e^{itH}$ preserves $D(X)$ and that $X(t)$ is self-adjoint on $D(X)$ for all $t$.

\begin{theorem} \label{t:DLY}
If $H$ is periodic, the strong limit
\[
Q
=
\slim_{t\to\infty} t^{-1}X(t)
\]
exists and $\ker(Q) \cap D(X) = \{0\}$.
\end{theorem}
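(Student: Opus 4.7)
The plan is to use the Bloch decomposition from Section~\ref{ss:bloch} to reduce the time-limit to a finite-dimensional stationary-phase computation on the Bloch fiber. First I would observe that the commutator $V := i[H,X]$, initially defined on $D(X)$, extends to the bounded operator $i(S - S^*)$, where $(Su)_n = u_{n+1}$. Using the assertion in the theorem that $e^{-itH}$ preserves $D(X)$, differentiating $X(s)\psi$ in $s$ gives the Duhamel identity
\begin{equation*}
X(t)\psi = X\psi + \int_0^t V(s)\psi\,ds, \qquad V(s) := e^{isH}Ve^{-isH},
\end{equation*}
for $\psi \in D(X)$, where the integral is a Bochner integral in $\ell^2(\bbZ)$. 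Since $t^{-1}X\psi \to 0$ in norm, it suffices to show that the Ces\`aro average $A_t := t^{-1}\int_0^t V(s)\,ds$ converges strongly on $\ell^2(\bbZ)$ to some bounded operator $Q$, and then that $Q$ is injective.

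Next I would transfer to the Bloch picture. Via the transform $\mathcal F$ from \eqref{eq:modqFourierDef}, $H$ becomes multiplication by $H_\theta$, and for finitely supported $u$ a direct computation on the basis $\{\varphi_{j,\ell}\}$ yields
\begin{equation*}
(\mathcal F X \mathcal F^{-1}\vec f)(\theta) = D\vec f(\theta) + iq\,\partial_\theta \vec f(\theta), \qquad D := \mathrm{diag}(1,2,\ldots,q).
\end{equation*}
Consequently $\widehat V(\theta) := \mathcal F V \mathcal F^{-1}$ is multiplication by the Hermitian matrix $\widehat V(\theta) = i[H_\theta, D] + qH_\theta'$. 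For $\theta \in (0,\pi)$, $H_\theta$ has simple spectrum by the discussion following Theorem~\ref{t:bloch}; writing $\{\vec\phi_n(\theta)\}$ for an orthonormal eigensystem with eigenvalues $\lambda_n(\theta)$ and spectral projections $P_n(\theta)$, one has
\begin{equation*}
e^{isH_\theta}\widehat V(\theta)e^{-isH_\theta} = \sum_{m,n} e^{is(\lambda_m(\theta)-\lambda_n(\theta))}\bigl\langle \vec\phi_m(\theta), \widehat V(\theta)\vec\phi_n(\theta)\bigr\rangle\,\vec\phi_m(\theta)\vec\phi_n(\theta)^*.
\end{equation*}
Averaging in $s$, the off-diagonal terms (where $\lambda_m \ne \lambda_n$) contribute $O(t^{-1})$, so the Ces\`aro mean converges in operator norm to $Q(\theta) := \sum_n \langle \vec\phi_n(\theta), \widehat V(\theta)\vec\phi_n(\theta)\rangle P_n(\theta)$. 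The diagonal entries of $[H_\theta, D]$ in the eigenbasis vanish (since $H_\theta \vec\phi_n = \lambda_n \vec\phi_n$ with $\lambda_n \in \bbR$), so the Hellmann--Feynman identity $\langle \vec\phi_n, H_\theta' \vec\phi_n\rangle = \lambda_n'(\theta)$ yields $Q(\theta) = q\sum_n \lambda_n'(\theta) P_n(\theta)$.

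Strong convergence $A_t\psi \to Q\psi$, with $Q$ defined by $(\mathcal F Q\psi)(\theta) = Q(\theta)\widehat\psi(\theta)$, then follows from dominated convergence on $L^2([0,2\pi); \bbC^q)$: fiberwise the averages converge pointwise for a.e.\ $\theta$, and the uniform bound $\|\widehat V(\theta)\| \le \|V\| < \infty$ gives domination by $\|V\|\,\|\widehat\psi(\cdot)\|_{\bbC^q} \in L^2$. For injectivity of $Q$, differentiating the relation $D(\lambda_n(\theta)) = 2\cos\theta$ from \eqref{eq:discriminantCharPoly} gives $D'(\lambda_n(\theta))\lambda_n'(\theta) = -2\sin\theta$; since solutions of $D(E) = c$ with $c \in (-2,2)$ are simple by Theorem~\ref{t:floquet}, one has $D'(\lambda_n(\theta)) \ne 0$ on $(0,\pi)$, so $\lambda_n'(\theta)$ vanishes only on the null set $\{0,\pi\}$. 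Therefore $Q(\theta)$ is invertible for almost every $\theta$, whence $Q\psi = 0$ forces $\widehat\psi \equiv 0$ and $\psi = 0$; in particular $\ker(Q) \cap D(X) = \{0\}$.

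The main obstacle is the Bloch-side identification $\widehat V(\theta) = i[H_\theta, D] + qH_\theta'$ together with the Hellmann--Feynman computation showing that only the band-dispersion piece $qH_\theta'$ survives the time average in the eigenbasis. Once this is in place, the convergence reduces to finite-dimensional Ces\`aro averaging plus dominated convergence, and injectivity is immediate from non-constancy of the band functions.
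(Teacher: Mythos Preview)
Your proof is correct and follows essentially the same route as the paper: reduce via the Duhamel formula to a fiberwise Ces\`aro average, identify the diagonal part of the commutator on each Bloch fiber as $q\lambda_n'(\theta)P_n(\theta)$ via Hellmann--Feynman, pass to the strong limit by dominated convergence, and deduce injectivity from non-constancy of the band functions. The only cosmetic difference is that you compute $\widehat V(\theta)=i[H_\theta,D]+qH_\theta'$ directly from $\widehat X = D + iq\,\partial_\theta$, whereas the paper packages the same identity through the gauge $\Lambda_\theta=\mathrm{diag}(e^{ik\theta/q})$ as $\partial_\theta(\Lambda_\theta^* H_\theta\Lambda_\theta)=\tfrac1q\Lambda_\theta^* \widehat V(\theta)\Lambda_\theta$; the two computations are equivalent.
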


In fact, the conclusion of this theorem applies to any operator family with a Bloch-like decomposition. It was proved first for continuum operators by Asch and Knauf \cite{AscKna1998Nonlin}, and then for block Jacobi operators \cite{DLY2015} (which include discrete Schr\"odinger operators as a special case), CMV matrices, and quantum walks \cite{AVWW,DFO}.
\bigskip

We will sketch the main strokes of the proof and refer the reader to \cite{DLY2015} for details in the present setting. Formally (we are avoiding some niceties regarding domains, since $X$ is unbounded), differentiating $X(t)$ with respect to time, one obtains:
\[
\frac{dX}{dt}
=
i e^{itH}(HX - XH) e^{-itH}
=
A(t),
\]
where
\[
A :=
i[H,X]
=
i(HX - XH).
\]
Then, one can identify
\begin{equation} \label{eq:XtoA}
\frac{1}{t}(X(t) - X)
=
\frac{1}{t} \int_0^t A(s) \, ds.
\end{equation}
If $H$ and $A$ were finite matrices, the existence of the limit on the right-hand side of \eqref{eq:XtoA} would be trivial. To wit, if $H_0$ and $A_0$ are $m \times m$ matrices with $H_0$ Hermitian-symmetric, then, writing the spectral decomposition of $H_0$ as
\[
H_0
=
\sum_{\lambda \in \sigma(H_0)} \lambda P_\lambda,
\]
one can readily compute that
\begin{equation} \label{eq:diagPart}
\lim_{t\to\infty} \frac{1}{t} \int_0^t e^{isH_0} A_0 e^{-isH_0} \, ds
=
\sum_{\lambda \in \sigma(H_0)} P_\lambda A_0 P_\lambda,
\end{equation}
the ``diagonal part'' of $A_0$ with respect to $H_0$. More generally, this argument obtains for operators having pure point spectrum. However, as $H$ has purely absolutely continuous spectrum, one has to work a little harder. The main idea is to apply \eqref{eq:diagPart} on the Floquet fibers to obtain the existence of the limit defining $A$ and compute the diagonal part explicitly (fiberwise) to verify that $\ker(Q)$ is trivial.

Thus, the Bloch decomposition from Section~\ref{ss:bloch} is the key ingredient in the proof of Theorem~\ref{t:DLY}. Recall that we defined the direct integral space $\Hi_q$ by
\begin{align*}
\Hi_q
& :=
L^2\left([0,2\pi), \bbC^q; \frac{d\theta}{2\pi} \right) \\
& =
\set{f:[0,2\pi) \to \bbC^q : \int_0^{2\pi} \|f(\theta)\|_{\bbC^q}^2 \, \frac{d\theta}{2\pi} < \infty}.
\end{align*}

Let $\mathcal F$ be the Fourier transform defined by \eqref{eq:modqFourierDef}. It is not hard to see that $\mathcal F$ also diagonalizes the commutator $A$ into a multiplication operator. Specifically, we have $\left( \widehat A f \right)(\theta) = A_\theta f(\theta)$ for $f \in \mathcal H_q$ and Lebesgue a.e.\ $\theta \in [0,2\pi)$, where
\[
A_\theta
=
i \begin{bmatrix}
0 & 1 &&& -e^{-i\theta} \\
-1 & 0 & 1 && \\
& \ddots & \ddots & \ddots & \\
&&-1 & 0 & 1 \\
e^{i\theta} &&& -1 & 0
\end{bmatrix}.
\]
Now let $\Lambda_\theta \in \bbC^{q \times q}$ denote the diagonal matrix with diagonal entries $\langle e_k, \Lambda_\theta e_k \rangle = e^{ik \theta/q}$, i.e.,
$$
\Lambda_\theta
=
\begin{bmatrix}
e^{i\theta/q} &&&& \\
& e^{2i\theta/q} &&& \\
&& \ddots && \\
&&& e^{i(q-1)\theta/q} & \\
&&&& e^{i\theta}
\end{bmatrix}.
$$
Define
$$
\widetilde H_\theta
=
\Lambda_\theta^* H_\theta \Lambda_\theta,
\quad
\widetilde A_\theta
=
\Lambda_\theta^* A_\theta \Lambda_\theta.
$$
By a direct calculation, one can verify that
\begin{equation} \label{eq:DLYmagic}
\frac{\partial}{\partial \theta} \widetilde H_\theta
=
\frac{1}{q} \widetilde A_\theta.
\end{equation}
The key calculation now is to compute the limit of the Heisenberg evolution of the truncated operators.
\begin{lemma}
For each $\theta \in [0,2\pi)$, let $\lambda_1(\theta) \le \cdots \leq  \lambda_q(\theta)$ denote the eigenvalues of $H_\theta$ {\rm(}enumerated with multiplicities{\rm)}. Then, for every $\theta \in [0,2\pi) \setminus \{0,\pi\}$, we have
\begin{equation} \label{eq:DLY9}
\lim_{t \to \infty} \frac{1}{t} \int_0^t \! e^{isH_\theta} A_\theta e^{-isH_\theta} \, ds
=
q \sum_{j=1}^q \frac{\partial \lambda_j}{\partial \theta}(\theta) P_j(\theta),
\end{equation}
where $P_j(\theta)$ denotes projection onto the eigenspace corresponding to $\lambda_j(\theta)$.
\end{lemma}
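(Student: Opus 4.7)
The plan is to reduce the ergodic-average on the right-hand side to the diagonal part of $A_\theta$ via \eqref{eq:diagPart}, and then to identify this diagonal part with the sum of weighted spectral projections on the right using the Hellmann--Feynman theorem together with the key identity \eqref{eq:DLYmagic}. Throughout, fix $\theta\in(0,\pi)\cup(\pi,2\pi)$, which ensures by Theorem~\ref{t:bloch} (combined with the multiplicity statement in Theorem~\ref{t:floquet}) that $H_\theta$ has simple spectrum; thus each $P_j(\theta)$ is rank one, and one can choose normalized eigenvectors $v_j(\theta)$ with $P_j(\theta)=v_j(\theta)v_j(\theta)^*$.

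First I would apply \eqref{eq:diagPart} directly to the finite Hermitian matrix $H_\theta$ with $A_0=A_\theta$. This gives the existence of the limit on the left-hand side of \eqref{eq:DLY9} and the identification
\[
\lim_{t\to\infty}\frac{1}{t}\int_0^t e^{isH_\theta}A_\theta e^{-isH_\theta}\,ds
=
\sum_{j=1}^q P_j(\theta)\,A_\theta\,P_j(\theta).
\]
Because each $P_j(\theta)$ is rank one, $P_j(\theta)A_\theta P_j(\theta)=\langle v_j(\theta),A_\theta v_j(\theta)\rangle P_j(\theta)$, so it remains to show the scalar identity
\begin{equation}\label{eq:HF-target}
\langle v_j(\theta), A_\theta v_j(\theta)\rangle
=
q\,\frac{\partial\lambda_j}{\partial\theta}(\theta).
\end{equation}

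The core step is to derive \eqref{eq:HF-target} from the conjugation identity \eqref{eq:DLYmagic}. Since $\widetilde H_\theta=\Lambda_\theta^*H_\theta\Lambda_\theta$ is unitarily equivalent to $H_\theta$, its eigenvalues are still $\lambda_1(\theta)\le\cdots\le\lambda_q(\theta)$ and the vectors $\widetilde v_j(\theta):=\Lambda_\theta^* v_j(\theta)$ form a corresponding orthonormal eigenbasis; in particular $\lambda_j(\theta)$ is simple for $\widetilde H_\theta$ as well. Simplicity ensures the eigenvalues of $\widetilde H_\theta$ depend analytically on $\theta$ locally and one may also choose the eigenvectors analytically, so the classical Hellmann--Feynman formula applies:
\[
\frac{\partial\lambda_j}{\partial\theta}(\theta)
=
\left\langle \widetilde v_j(\theta),\,\frac{\partial\widetilde H_\theta}{\partial\theta}\,\widetilde v_j(\theta)\right\rangle.
\]
Plugging in \eqref{eq:DLYmagic} and unpacking the unitary conjugation $\widetilde A_\theta=\Lambda_\theta^*A_\theta\Lambda_\theta$ yields
\[
\frac{\partial\lambda_j}{\partial\theta}(\theta)
=
\frac{1}{q}\langle\widetilde v_j(\theta),\widetilde A_\theta\widetilde v_j(\theta)\rangle
=
\frac{1}{q}\langle v_j(\theta),A_\theta v_j(\theta)\rangle,
\]
which is precisely \eqref{eq:HF-target}. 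Combining with the first step completes the proof.

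The main obstacle, and the reason $\theta\in\{0,\pi\}$ is excluded, is ensuring simplicity of the spectrum of $H_\theta$: only then are the one-dimensional eigenprojections (and hence the individual eigenvalue branches) smooth in $\theta$, so that the Hellmann--Feynman formula can be applied to each $\lambda_j$ separately, and only then does the diagonal part in \eqref{eq:diagPart} collapse to a scalar multiple of $P_j(\theta)$. Both simplifications fail generically at $\theta=0$ and $\theta=\pi$, where band-edge crossings occur and the eigenvalue branches are at best Lipschitz; handling those endpoints would require a more delicate perturbation-theoretic argument which \eqref{eq:DLY9} sidesteps by excluding them. Everything else -- the reduction to the finite-dimensional ergodic theorem and the manipulation of the conjugation identity -- is a routine calculation.
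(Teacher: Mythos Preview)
Your proof is correct and follows essentially the same approach as the paper: reduce to the diagonal part via \eqref{eq:diagPart}, use simplicity of the spectrum for $\theta\notin\{0,\pi\}$ to make each $P_j(\theta)$ rank one, and then derive the scalar identity \eqref{eq:HF-target} from \eqref{eq:DLYmagic} via Hellmann--Feynman applied to $\widetilde H_\theta$. The paper's computation $\frac{\partial}{\partial\theta}\langle\widetilde v_j,\widetilde H_\theta\widetilde v_j\rangle=\frac{1}{q}\langle\widetilde v_j,\widetilde A_\theta\widetilde v_j\rangle$ is exactly your invocation of Hellmann--Feynman, just written out rather than named.
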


\begin{proof}[Proof Sketch]
Since $H_\theta$ is a finite Hermitian-symmetric matrix, the limit on the left-hand side of \eqref{eq:DLY9} exists and is simply equal to the diagonal part of $A_\theta$ with respect to the eigenspaces of $H_\theta$. Since $H_\theta$ has simple spectrum for $\theta \neq 0,\pi$, it then suffices to show
\[
\langle v_j(\theta), A_\theta v_j(\theta) \rangle
=
q \frac{\partial \lambda_j}{\partial \theta}(\theta)
\]
for $1 \le j \le q$ and $\theta \neq 0,\pi$. This follows from \eqref{eq:DLYmagic}. To see this, let $v_j(\theta)$ denote a continuous choice of a unit vector from the $\lambda_j(\theta)$ eigenspace of $H_\theta$, put $\widetilde v_j(\theta) := \Lambda_\theta^* v_j(\theta$) and observe that
\begin{align*}
\frac{\partial \lambda_j}{\partial \theta}(\theta)
& =
\frac{\partial}{\partial \theta} \langle v_j(\theta), H_\theta v_j(\theta) \rangle \\
& =
\frac{\partial}{\partial \theta} \langle \widetilde v_j(\theta), \widetilde H_\theta \widetilde v_j(\theta) \rangle \\
& =
\frac{1}{q} \langle \widetilde v_j(\theta), \widetilde A_\theta \widetilde v_j(\theta) \rangle \\
& =
\frac{1}{q} \langle  v_j(\theta), A_\theta  v_j(\theta) \rangle
\end{align*}
for $\theta \neq 0,\pi$.
\end{proof}

\begin{proof}[Proof of Theorem~\ref{t:DLY}] Once one has \eqref{eq:DLY9}, we get the  conclusions of Theorem~\ref{t:DLY} with
$$
Q
=
 \mathcal F^{-1} \left( \int_{[0,2\pi)}^\oplus \! \sum_{j=1}^q  q\frac{\partial \lambda_j}{\partial \theta}(\theta) P_j(\theta) \, \frac{d\theta}{2\pi} \right) \mathcal F.
$$
In particular, the limit exists by \eqref{eq:DLY9} and dominated convergence. The kernel of $Q$ is trivial, since $\partial_\theta\lambda_j(\theta) \neq 0$ for $\theta \neq 0,\pi$ (which may be proved using \eqref{eq:discriminantCharPoly} and implicit differentiation).
\end{proof}

\section{Almost-Periodic Hulls}\label{s.almostperiodic}

\subsection{Almost-Periodic Sequences}

The shift acts on $\ell^\infty(\bbZ)$ via
\[
[SV]_n
=
V_{n+1}.
\]
Given $V$, the \emph{orbit} of $V$ is the set of its translates:
\[
\orb(V)
=
\set{S^k V : k \in \bbZ};
\]
the \emph{hull} of $V$ is the closure of the orbit in $\ell^\infty$:
\[
\hull(V)
=
\overline{\orb(V)}^{\;\ell^\infty(\bbZ)}.
\]
Naturally, it is easy to see that $V$ is periodic if and only if $\hull(V)$ is finite. We say that $V$ is \textit{Bochner almost-periodic} if $\hull(V)$ is compact (in the $\ell^\infty(\bbZ)$ topology). We say that $V$ is \textit{Bohr almost-periodic} if, for every $\varepsilon>0$, the $\varepsilon$-almost periods of $V$ are relatively dense in $\bbZ$. That is, $V$ is Bohr almost-periodic if for every $\varepsilon>0$, there is an $R>0$ so that for any $k \in \bbZ$, there exists $\ell \in \bbZ$ with $|k-\ell|\leq R$ and
\[
\|S^\ell V  - V\|_\infty
<
\varepsilon.
\]

\begin{prop}
An element $V \in \ell^\infty(\bbZ)$ is almost-periodic in the Bohr sense if and only if it is almost-periodic in the Bochner sense.
\end{prop}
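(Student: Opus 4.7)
The plan is to prove the two implications separately, using as the central unifying tool the fact that the shift $S$ acts isometrically on $\ell^\infty(\bbZ)$, i.e.\ $\|S^k W_1 - S^k W_2\|_\infty = \|W_1 - W_2\|_\infty$ for all $W_1, W_2 \in \ell^\infty(\bbZ)$ and $k \in \bbZ$. Since $\ell^\infty(\bbZ)$ is complete and $\hull(V)$ is closed by construction, compactness of $\hull(V)$ is equivalent to total boundedness of $\orb(V)$, because total boundedness passes between a set and its closure.

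For the direction Bohr $\Rightarrow$ Bochner, I would fix $\varepsilon > 0$, let $R = R(\varepsilon)$ be as in the Bohr condition, and show that $\{S^m V : -R \leq m \leq R\}$ is a finite $\varepsilon$-net for $\orb(V)$. Given any $n \in \bbZ$, the Bohr condition supplies some $\ell$ with $|n - \ell| \leq R$ and $\|S^\ell V - V\|_\infty < \varepsilon$; setting $m = n - \ell$, the isometry property yields $\|S^n V - S^m V\|_\infty = \|S^\ell V - V\|_\infty < \varepsilon$. Since $\varepsilon > 0$ was arbitrary, $\orb(V)$ is totally bounded, and therefore $\hull(V)$ is compact.

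For the direction Bochner $\Rightarrow$ Bohr, I would fix $\varepsilon > 0$ and use total boundedness of $\hull(V)$ together with density of $\orb(V)$ in $\hull(V)$ to construct a finite $\varepsilon$-net $\{S^{k_1} V, \ldots, S^{k_m} V\} \subset \orb(V)$ for $\hull(V)$; this is a standard halving argument (take an $\varepsilon/2$-net in $\hull(V)$, then perturb each center by less than $\varepsilon/2$ into the dense orbit). Then for any $k \in \bbZ$, pick $j$ with $\|S^k V - S^{k_j} V\|_\infty < \varepsilon$, and set $\ell = k - k_j$; isometry once more gives $\|S^\ell V - V\|_\infty < \varepsilon$, so $\ell$ is an $\varepsilon$-almost period satisfying $|k - \ell| = |k_j| \leq R := \max_j |k_j|$. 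This verifies Bohr's condition with this value of $R$.

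I do not anticipate a genuine obstacle: both implications are driven by the same observation, namely that the isometric nature of the shift forces any approximation obtained near one position in the orbit to propagate uniformly to every position. The most delicate point is the halving step used to place the $\varepsilon$-net inside $\orb(V)$ rather than merely in $\hull(V)$; everything else reduces to direct computation with the triangle inequality and the isometry.
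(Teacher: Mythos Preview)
Your proof is correct and follows essentially the same approach as the paper's: both directions are driven by the shift isometry, reducing the question to total boundedness of the orbit and passing between a finite $\varepsilon$-net in $\orb(V)$ and relative density of $\varepsilon$-almost periods. (Note that the paper's proof sketch appears to have the ``Bohr'' and ``Bochner'' labels swapped in the two paragraphs, but once this is corrected the arguments coincide with yours; your version is in fact more explicit about the halving step needed to place the net inside $\orb(V)$.)
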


Thus, going forward, we only speak of potentials being \emph{almost-periodic} and freely use both characterizations.

\begin{proof}[Proof Sketch]
If $V$ is Bohr-almost periodic, let $\varepsilon > 0$ and cover $\hull(V)$ by finitely many $\varepsilon$-balls centered at elements of $\orb(V)$, say
\[
\hull(V)
\subseteq
\bigcup_{j=1}^n B_\varepsilon(S^{m_j} V).
\]
Take $R = \max|m_j|$. Then, given $k$, choose $m_j$ with $S^k V \in B_\varepsilon(S^{m_j}V)$ to get $\|S^{k-m_j}V - V\|_\infty < \varepsilon$ and hence $V$ is Bochner almost-periodic.
\medskip

Conversely, if $V$ is Bochner almost-periodic, then one can cover $\hull(V)$ by finitely many $\varepsilon$-balls centered at elements of $\orb(V)$. Thus, $\hull(V)$ is complete and totally bounded, hence compact.
\end{proof}

The operation
\[
(S^j V)\oast (S^kV)
=
S^{j+k}V
\]
makes $\orb(V)$ into a group. For abelian groups, one usually uses ``+'' or ``*''; we have eschewed these symbols to avoid confusion with addition and convolution of functions.

In the event that $V$ is periodic of minimal period $q$, it is clear that $\orb(V) \cong \bbZ_q$. Otherwise, when $V$ is aperiodic, $\orb(V) \cong \bbZ$. We should note that this isomorphism is purely as groups without topology; in particular, the reader is invited to verify that $\orb(V)$ \emph{never} has the discrete topology when $V$ is almost-periodic and aperiodic.

\begin{prop} \label{p:aphullgroup}
The operation $\oast$ extends uniquely to a continuous operation $\oast: \hull(V)^2 \to \hull(V)$. With respect to this operation, $\hull(V)$ is a compact abelian topological group.
\end{prop}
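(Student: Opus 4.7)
The plan is to use the shift-invariance of the $\ell^\infty$ norm to show that $\oast$ is Lipschitz on $\orb(V)\times\orb(V)$, then extend by density to the compact completion $\hull(V)$, and finally transport all group axioms by continuity.

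\textbf{Step 1 (Lipschitz estimate on the orbit).} The first observation is that $S:\ell^\infty(\bbZ)\to\ell^\infty(\bbZ)$ is an isometry. Hence for any $j,j',k,k'\in\bbZ$, two applications of shift-invariance combined with the triangle inequality give
\[
\| S^{j+k}V - S^{j'+k'}V\|_\infty
\le
\|S^{j+k}V - S^{j'+k}V\|_\infty + \|S^{j'+k}V - S^{j'+k'}V\|_\infty
=
\|S^jV - S^{j'}V\|_\infty + \|S^kV - S^{k'}V\|_\infty.
\]
Thus $\oast$ is Lipschitz (with constant $1$ in each slot) on $\orb(V)\times\orb(V)$, and in particular uniformly continuous. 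Note that the formula $(S^jV)\oast(S^kV)=S^{j+k}V$ is unambiguous: if $V$ is $q$-periodic, both sides depend only on the residues of $j,k\bmod q$, and in the aperiodic case the correspondence $j\leftrightarrow S^jV$ is injective.

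\textbf{Step 2 (Extension and basic axioms).} Since $\hull(V)$ is compact, it is a complete metric space, and by definition $\orb(V)$ is dense in it. Therefore, the uniformly continuous map $\oast:\orb(V)\times\orb(V)\to\orb(V)\subseteq\hull(V)$ admits a unique uniformly continuous extension $\oast:\hull(V)\times\hull(V)\to\hull(V)$; the image automatically lies in the closed set $\hull(V)$. Joint continuity then propagates associativity, commutativity, and the identity property of $V$ from the dense subset $\orb(V)\times\orb(V)$ (resp.\ $\orb(V)\times\orb(V)\times\orb(V)$) to all of $\hull(V)$.

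\textbf{Step 3 (Inverses and continuity of inversion).} The subtlest point, and the main obstacle, is producing inverses and verifying that inversion is continuous. My plan is to first establish that $S^kV\mapsto S^{-k}V$ is an isometry on $\orb(V)$: shift-invariance again gives
\[
\|S^{-k}V - S^{-k'}V\|_\infty
=
\|V - S^{k'-k}V\|_\infty
=
\|S^kV - S^{k'}V\|_\infty.
\]
Hence inversion has a unique extension to an isometric map $\iota:\hull(V)\to\hull(V)$. On the dense set $\orb(V)$ the identity $(S^kV)\oast(S^{-k}V)=V$ holds, so joint continuity of $\oast$ yields $W\oast\iota(W)=V$ for every $W\in\hull(V)$. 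Uniqueness of the inverse is the standard group argument using associativity and the identity element. Together with the continuous multiplication from Step 2 and the continuity (indeed isometry) of $\iota$, this shows that $\hull(V)$ is a compact abelian topological group. The uniqueness clause of the proposition is built in: any continuous extension of $\oast$ to $\hull(V)\times\hull(V)$ is determined by its values on the dense subset $\orb(V)\times\orb(V)$.
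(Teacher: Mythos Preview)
Your proof is correct and follows essentially the same route as the paper's: both use the isometry property of the shift to obtain a Lipschitz estimate for $\oast$ on $\orb(V)\times\orb(V)$, then extend by density and transport the group axioms by continuity. Your version is in fact more explicit than the paper's, which simply asserts that ``the abelian group properties of $(\hull(V),\oast)$ follow from those of $(\orb(V),\oast)$ and continuity''; your Step~3 fills in the details for the existence and continuity of inverses, which is the only part that is not completely routine.
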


\begin{proof}
Since $S$ is an isometry, we have
\begin{align}
\label{eq:hullops:cont}
\|S^{k+\ell}V - S^{k'+\ell'}V\|_\infty
& =
\|S^{k-k'}V - S^{\ell'-\ell}V\|_\infty \\
\nonumber
& \le
\|S^{k-k'}V - V\|_\infty + \|S^{\ell'-\ell}V - V\|_\infty \\
\nonumber
& =
\| S^k V - S^{k'}V\|_\infty + \|S^{\ell'}V - S^\ell V\|_\infty.
\end{align}
This calculation shows that $\oast$ is uniformly continuous on $\orb(V) \times \orb(V)$ and hence extends uniquely to a continuous operation
\[
\oast : \hull(V) \times \hull(V) \to \hull(V).
\]
The abelian group properties of $(\hull(V),\oast)$ follow from those of $(\orb(V),\oast)$ and continuity.
\end{proof}

Clearly, if $V$ is almost-periodic, then $\Omega= \hull(V)$ is also a \emph{monothetic} group, that is, $\Omega$ contains a dense cyclic subgroup (namely $\orb(V)$). In view of this observation and Proposition~\ref{p:aphullgroup}, we can also characterize almost-periodic $V \in \ell^\infty(\bbZ)$ as precisely those sequences of the form
\begin{equation}\label{e.lpdyn}
V(n)
=
f(n\alpha),
\end{equation}
where $G$ is a compact, monothetic group whose topology is generated by a translation-invariant metric; $\alpha \in G$ generates a dense cyclic subgroup; and $f:G\to\bbR$ is continuous. In light of this connection, the theory of compact topological groups naturally plays an important role in the analysis of Schr\"odinger operators with almost-periodic potentials.

\subsection{Limit-Periodic Hulls}

One can characterize exactly what compact abelian groups are hulls of limit-periodic sequences.

\begin{prop} \label{p:lphullchar}
Every limit-periodic $V$ is almost-periodic. Moreover, an almost-periodic $V$ is limit-periodic if and only if $\hull(V)$ is totally disconnected. Finally, an almost-periodic $V$ is aperiodic if and only if no point of $\hull(V)$ is isolated.
\end{prop}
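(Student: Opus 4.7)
I would dispatch the three assertions in order of difficulty. Limit-periodicity $\Rightarrow$ almost-periodicity follows from total boundedness of $\hull(V)$: given $\varepsilon>0$ and a periodic approximation $V_N$ with $\|V-V_N\|_\infty<\varepsilon/2$, the finite set $\orb(V_N)$ is an $\varepsilon$-net for $\orb(V)$, because $\|S^kV-S^kV_N\|_\infty=\|V-V_N\|_\infty$ for all $k$ and $S^kV_N$ takes only finitely many distinct values. For the aperiodicity criterion, the key point is that every $W_0\in\hull(V)$ acts as a homeomorphism of $\hull(V)$ via $W\mapsto W\oast W_0$ (Proposition~\ref{p:aphullgroup}), so the set of isolated points is translation-invariant and is therefore either empty or equal to all of $\hull(V)$; in the latter case, $\hull(V)$ is discrete and, being compact, finite, which forces $V$ to be periodic. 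The converse is immediate since the hull of a periodic potential is a finite discrete set.

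The real work lies in the characterization of limit-periodicity via total disconnectedness of the hull. For the forward direction, my plan is to associate to each periodic approximation $V_N$ of $V$ (with minimal period $q_N$ and $\delta_N:=\|V-V_N\|_\infty$) a clopen subgroup of $\hull(V)$ whose diameter is $O(\delta_N)$. The crucial estimate is $\|S^{mq_N}V-V\|_\infty\le 2\delta_N$ for every $m\in\bbZ$, obtained by inserting $V_N$ and using $S^{mq_N}V_N=V_N$. Consequently
\[
G_N
:=
\overline{\set{S^{mq_N}V:m\in\bbZ}}
\subseteq
\overline{B_{2\delta_N}(V)}
\]
is a closed subgroup of $\hull(V)$. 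Writing an arbitrary $k\in\bbZ$ as $mq_N+r$ with $0\le r<q_N$ shows $\orb(V)\subseteq\bigcup_{r=0}^{q_N-1}S^rV\oast G_N$; taking closures exhibits $\hull(V)$ as a union of at most $q_N$ closed cosets of $G_N$, so $G_N$ has finite index. The complement of $G_N$ is then a finite union of closed cosets, hence closed, making $G_N$ clopen. Since left translation is an $\ell^\infty$-isometry on $\hull(V)$, every coset has the same diameter as $G_N$, which is at most $4\delta_N$. Letting $\delta_N\to 0$ produces clopen partitions of $\hull(V)$ of arbitrarily small mesh, which forces total disconnectedness.

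For the backward direction, I would use the representation $V(n)=f(n\alpha)$ from \eqref{e.lpdyn} with $G=\hull(V)$, $\alpha=SV$, and $f=\mathrm{ev}_0$; the hypothesis makes $G$ a compact Hausdorff totally disconnected abelian topological group, hence profinite by the material in Appendix~\ref{s.profinite}, so the identity admits a neighborhood basis of clopen (finite-index) subgroups. Given $\varepsilon>0$, uniform continuity of $f$ together with translation invariance of the $\ell^\infty$-metric produces a clopen subgroup $H\subseteq G$ across whose cosets $f$ oscillates by less than $\varepsilon$. Freezing $f$ at one representative per coset of $H$ yields a function $f_N$ with $\|f-f_N\|_\infty<\varepsilon$ that descends to the finite quotient $G/H$, and then $V_N(n):=f_N(n\alpha)$ is periodic of period equal to the order of $\alpha$ in $G/H$ and satisfies $\|V-V_N\|_\infty<\varepsilon$.

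I expect the most delicate step to be the transition from \emph{closed} to \emph{clopen} for the subgroup $G_N$ in the forward direction: the construction only directly produces $G_N$ as a closed subset of a small ball, and one must combine finiteness of the index (obtained via the division-with-remainder decomposition of $\bbZ$) with compactness of $\hull(V)$ to conclude openness. Once this is in hand, the remaining bookkeeping — diameter bounds via translation invariance, and the passage $\delta_N\to 0$ — is routine.
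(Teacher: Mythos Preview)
Your argument is correct, and for the first assertion, the forward implication in the second, and the third assertion it matches the paper's proof essentially line for line: the paper constructs the same clopen subgroup $\hull^{q_j}(V)=\overline{\{S^{kq_j}V:k\in\bbZ\}}$ and argues finite index $\Rightarrow$ clopen exactly as you do, and for aperiodicity the paper also uses that isolatedness propagates across the hull by translation.

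The one genuine divergence is in the backward direction of the limit-periodic $\Leftrightarrow$ totally disconnected equivalence. You invoke the profinite structure from Appendix~\ref{s.profinite} to find a clopen \emph{subgroup} $H$ of small diameter, freeze $f=\mathrm{ev}_0$ on cosets of $H$, and read off periodicity of the resulting potential from the finite quotient $G/H$. The paper instead works only with a small compact open \emph{neighborhood} $N$ of the identity (via Proposition~\ref{p:tdhausd:zd}, not the subgroup basis of Proposition~\ref{p:tdgroup:zd}), uses almost-periodicity to find a single $p$ with $S^pV\in N$, shows inductively that $\{S^{kp}V:k\in\bbZ\}\subseteq N$ by exploiting the positive distance between $N$ and its complement, and then takes the $p$-periodic sequence agreeing with $V$ on $[0,p-1]$. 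Your route is more structural and foreshadows the Stone--Weierstrass argument behind Theorem~\ref{t:pdense}; the paper's route is slightly more elementary, needing only zero-dimensionality of the hull rather than the full profinite description.
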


\begin{proof}
Suppose $V$ is limit-periodic. For each $j \in \bbZ_+$, choose $V_j \in \ell^\infty(\bbZ)$ and $q_j \in \bbZ_+$ such that $S^{q_j} V_j = V_j$ and
\[
\lim_{j \to \infty} \|V - V_j\|_\infty = 0.
\]
As before, it suffices to prove that $\orb(V)$ is totally bounded (since this also implies that $\hull(V)$ is totally bounded). Given $\varepsilon > 0$, choose $j$ such that
$$
\|V - V_j\|_\infty < \varepsilon.
$$
One can then check that $\orb(V)$ is contained in the following finite union of $\varepsilon$-balls:
$$
\bigcup_{k = 0}^{q_j - 1} B(S^kV_j,\varepsilon).
$$
Thus, $\hull(V)$ is complete and totally bounded, hence compact, so $V$ is almost-periodic. Let us show that $\hull(V)$ is totally disconnected. Since the hull of $V$ is a topological group, it suffices to show that there are arbitrarily small neighborhoods of the identity (i.e.\ $V$) that are simultaneously open and closed. Therefore, given $\varepsilon > 0$ we will show that $B(V,\varepsilon) \cap \hull(V)$ contains a non-empty set that is both closed and open. Choose $j$ so that $\|V_j - V\|_\infty \leq \frac{\varepsilon}{2}$. Then,
$$
\hull^{q_j}(V)
:=
\overline{\{ S^{kq_j} V : k \in \bbZ \}}
$$
is a compact subgroup of $\hull(V)$ of index at most $q_j$. Clearly, $\hull^{q_j}(V)$ is closed, but it is also open since it is the complement of the union of no more than $q_j-1$ other closed cosets. Moreover, since $S^{q_j} V_j = V_j$ and $\|V_j - V\|_\infty \leq \frac{\varepsilon}{2}$, every element $W \in \hull^{q_j}(V)$ satisfies $ \| W - V_j \|_\infty \leq \varepsilon/2 $ and hence $\hull^{q_j}(V)$ is contained in the $\varepsilon$-ball centered at $V$. Consequently, $\hull(V)$ is totally disconnected.
\medskip

Conversely, let $V$ be almost-periodic and suppose its hull is totally disconnected. We have to show that $V$ is limit-periodic. Given $\varepsilon > 0$, we have to find $W \in \ell^\infty(\bbZ)$ and $p \in \bbZ_+$ with $S^p W = W$ and $\|W - V\|_\infty < \varepsilon$. By Proposition~\ref{p:tdhausd:zd}, we may choose a compact open neighborhood $N$ of $V$ in $\hull(V)$, small enough so that
\begin{equation}\label{e.avilalp1}
\|(W_1 \oast W_2) - W_1 \|_\infty
<
\varepsilon/2
\quad
\text{ for all } W_1 \in \hull(V), \; W_2 \in N.
\end{equation}
This is possible since $\oast$ is uniformly continuous, $V$ is the identity of $\hull(V)$ with respect to $\oast$, and $\hull(V)$ is totally disconnected. Since the sets $N$ and $\hull(V) \setminus N$ are compact and disjoint, there exists $\delta > 0$ so that $\| X - Y \| \geq \delta$ for all $X \in N$ and all $Y \in \hull(V) \setminus N$. By almost-periodicity of $V$, we can choose $p \ge 1$ so that
$$
\|S^p V - V\|_\infty < \delta,
$$
hence $S^p V \in N$ by our choice of $\delta$. But then we find inductively that $\{ S^{kp}V : k \in \bbZ \} \subseteq N$, by isometry of $S$ and the choice of $\delta$. Now consider the $p$-periodic $W$ that coincides with $V$ on $[0,p-1]$. Given $n \in \bbZ$, we write $n = r + \ell p$ with $\ell \in \bbZ$ and $0 \le r \le p-1$. Then, it follows from \eqref{e.avilalp1} that
\begin{align*}
|V(n) - W(n)|
& =
| V(r + \ell p) - V(r)| \\[1mm]
& =
\left| (S^r V \oast S^{\ell p}V) (0) - (S^rV)(0) \right| \\[1mm]
& <
\varepsilon/2,
\end{align*}
since $S^{\ell p}V \in N$. This shows that the $p$-periodic $W$ obeys $\|W - V\|_\infty \leq \varepsilon/2 < \varepsilon$, concluding the proof of the first two claims.

For the final claim, note first that $\hull(V)$ consists only of isolated points when $V$ is periodic. Conversely, if $V$ is aperiodic (and almost-periodic), let $\varepsilon_j$ be any sequence converging to zero and pick $\varepsilon_j$ almost-periods $0<q_1< q_2 < \cdots$. Then $S^{q_j}V \to V$ (in $\ell^\infty$), which implies that $V$ is not isolated. We leave it to the reader to confirm that this implies that no point of $\hull(V)$ is isolated.
\end{proof}

In light of Proposition~\ref{p:lphullchar}, the following definition is natural.

\begin{definition}
A \emph{Cantor group} is a compact, totally disconnected group that has no isolated points.
\end{definition}

\begin{example}
The cyclic group $\bbZ_q$ is not a Cantor group, nor is the circle group $\bbT = \bbR/\bbZ$. Given a prime $p$, the group $\bbJ_p$ of $p$-adic integers is a Cantor group. The group $\bbJ_p$ admits a minimal translation, namely $T\omega = \omega+1$. More generally, if $\mathfrak{q} = (q_1,q_2, \ldots)$ is a sequence of positive integers so that $q_j | q_{j+1}$ for every $j$, the inverse limit
\[
\bbJ_{\mathfrak{q}}
=
\varprojlim \bbZ_{q_j}
\]
is a Cantor group; (the map from $\bbZ_{q_{j+1}}$ to $\bbZ_{q_j}$ is the canonical projection). For more about the $p$-adic integers and inverse limits, see the appendix.
\end{example}

In the course of the proof of Proposition~\ref{p:lphullchar}, we encountered a device that is quite useful in the study of limit-periodic operators and hulls, in the guise of the closed subgroup generated by $S^q V$, which we denoted $\hull^q(V)$. Namely, a Cantor group will have many compact finite-index subgroups. These compact subgroups are useful, as they provide a means of producing precisely those periodic potentials that can be represented via continuous functions on the hull. To be more specific, fix a monothetic Cantor group $\Omega$ and let us define $\CP$ to be the subset of $f \in C(\Omega,\bbR)$ with the property that $f \circ T^q = f$ for some $q \in \bbZ_+$. Clearly, if $f$ satisfies such an identify, one has
\[
V_\omega(n+q)
=
f(T^{n+q}\omega)
=
f(T^n\omega)
=
V_\omega(n),
\]
so that the associated potentials are periodic.

\begin{theorem} \label{t:pdense}
$\CP$ is dense in $C(\Omega,\bbR)$.
\end{theorem}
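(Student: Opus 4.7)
The natural approach is to invoke the Stone--Weierstrass theorem: I would verify that $\CP$ is a unital subalgebra of $C(\Omega,\bbR)$, that $\CP$ separates the points of $\Omega$, and then conclude density.

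The algebraic structure is a formality. If $f,g \in \CP$ with $f \circ T^{q_1} = f$ and $g \circ T^{q_2} = g$, then $q_1 q_2$ is a common period for $f+g$, $fg$, and $cf$ with $c \in \bbR$; constant functions are trivially in $\CP$. Thus $\CP$ is a unital subalgebra of $C(\Omega,\bbR)$.

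The substantive step is point separation. Fix $\omega_1 \neq \omega_2$ in $\Omega$. I would use the structure theory of compact, totally disconnected groups (as developed in the appendix and already exploited in the proof of Proposition~\ref{p:lphullchar}) to produce a clopen subgroup $H \leq \Omega$ with $\omega_1^{-1} \oast \omega_2 \notin H$; concretely, the closed subgroups $\hull^{q}(V)$ from the proof of Proposition~\ref{p:lphullchar} already furnish a neighborhood basis of the identity consisting of clopen subgroups of finite index in $\Omega$. Then $\omega_1 H \neq \omega_2 H$. Since $H$ is open and $\Omega$ is compact, the coset space $\Omega/H$ is finite (and discrete). Now $\alpha \in \Omega$ generates a dense cyclic subgroup of $\Omega$, so its image in $\Omega/H$ is a dense subgroup of this finite group and hence is all of $\Omega/H$; in particular, the class of $\alpha$ has some finite order $q \in \bbZ_+$, and $\alpha^q \in H$. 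Define $f = \chi_{\omega_1 H}$. Since $\omega_1 H$ is clopen, $f \in C(\Omega,\bbR)$. Moreover, because $\alpha^q \in H$, for every $\omega \in \Omega$ one has $\omega \oast \alpha^q \in \omega_1 H$ iff $\omega \in \omega_1 H$, so $f \circ T^q = f$ and therefore $f \in \CP$. Clearly $f(\omega_1) = 1 \neq 0 = f(\omega_2)$, which separates the two points.

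Stone--Weierstrass then yields density of $\CP$ in $C(\Omega,\bbR)$. The only genuinely nontrivial ingredient is the interplay between the totally disconnected structure of $\Omega$ (supplying arbitrarily small clopen finite-index subgroups) and the density of $\langle \alpha \rangle$ (forcing some power $\alpha^q$ to land in any prescribed such subgroup); both facts are already in place in the paper, so the proof should be short once this point is isolated.
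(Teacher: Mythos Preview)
Your proposal is correct and follows essentially the same route as the paper: Stone--Weierstrass, with point separation established by choosing an open finite-index subgroup $H$ not containing $\omega_1^{-1}\oast\omega_2$ and using a function that factors through $\Omega/H$. The only cosmetic difference is that the paper pulls back an arbitrary function on the finite quotient while you take the specific choice $\chi_{\omega_1 H}$, and your justification that some $\alpha^q \in H$ (via density of $\langle\alpha\rangle$ in the finite quotient) is a slight rephrasing of the paper's appeal to the index.
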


\begin{proof}
Since $\CP$ is an algebra and contains all constant functions, it suffices (by Stone--Weierstrass) to verify that $\CP$ separates points of $\omega$. To that end, let $\omega \neq \omega'$ be given.

 By Theorem~\ref{p:tdgroup:zd}, the open subgroups of $\Omega$ comprise a neighborhood basis for the topology of $\Omega$ at the identity. In light of this, choose $\Omega_0 \subseteq \Omega$ to be an open subgroup with $\omega - \omega' \notin \Omega_0$. Define a function $g:\Omega/\Omega_0 \to \bbR$ such that $g(\omega + \Omega_0) \neq g(\omega'+\Omega_0)$ (which is possible precisely because $\omega$ and $\omega'$ are inequivalent mod $\Omega_0$). Then,
 \[
 f(\omega)
 =
 g(\omega + \Omega_0)
 \]
 yields a continuous\footnote{Since $\Omega_0$ is open, $\Omega/\Omega_0$ has the discrete topology, so continuity of $f$ is free.} function with $f(\omega) \neq f(\omega')$. Moreover, since $\Omega_0$ is open and $\Omega$ is compact, $\Omega_0$ is a finite-index subgroup; denoting $q = \mathrm{index}(\Omega_0)$, one necessarily has $q\alpha \in \Omega_0$, which implies
 \[
 f(T^q x)
 =
 f(q\alpha + x)
 =
 f(x)
 \]
for all $x \in \Omega$, whence $f \in \CP$. Thus, $\CP$ is dense in $C(\Omega,\bbR)$ by Stone--Weierstrass.
\end{proof}

\section{Ergodic Schr\"odinger Operators}\label{s.ergodic}

We have seen that limit-periodic Schr\"odinger operators are those having potentials of the form
\[
V(n)
=
V_\omega(n)
=
f(T^n\omega),
\]
where $\omega$ is an element of $\Omega$ a monothetic metrizable Cantor group, $T$ is a minimal translation of $\Omega$, and $f:\Omega \to \bbR$ is continuous. Thus, limit-periodic operators naturally fall into the category of operators with ergodic, dynamically defined potentials. The present subsection will describe some tools, techniques, and objects that this dynamical formalism enables us to use in the study of ergodic Schr\"odinger operators.

\begin{definition}
Let $(\Omega,\mathcal{B},\mu)$ be a probability measure space. That is, $\Omega$ is a nonempty set, $\mathcal{B}$ is a $\sigma$-algebra of subsets of $\Omega$, and $\mu$ is a (positive) probability measure defined on $\mathcal{B}$. A $\mu$\emph{-ergodic transformation} $T:\Omega \to \Omega$ is a measurable transformation which is $\mu$\emph{-preserving} in the sense that
\[
\mu(T^{-1}E) = \mu(E) \text{ for every } E \in \mathcal{B}
\]
and which has the property that $\mu(E) \in \{0,1\}$  whenever $E \subseteq \Omega$ satisfies $T^{-1}E = E$. We will also interchangeably say that $\mu$ is a $T$\emph{-ergodic measure}.

Suppose $T$ is invertible and that $T^{-1}$ is also measurable, and let $f:\Omega \to \bbR$ denote a bounded, measurable function. The associated family of \emph{ergodic Schr\"odinger operators} is defined by $H_\omega = \Delta + V_\omega$, where
\[
V_\omega(n)
=
f(T^n\omega), \quad \omega \in \Omega, \; n \in \bbZ.
\]
\end{definition}

In view of Proposition~\ref{p:lphullchar}, limit-periodic operators fall into this categorization by taking $\Omega = \hull(V)$, $T\omega = \omega \oast SV$, and $f(\omega) = \omega(0)$. The notation becomes somewhat redundant in that case, since one has $V_\omega = \omega$ for $\omega \in \hull(V)$, but we will continue to write $V_\omega$ to better match current notational conventions in the literature. In this setting, one can check that the normalized Haar measure is the unique $T$-ergodic measure on $\Omega$.

Throughout the remainder of the present section, we fix a measure space $(\Omega,\mathcal{B},\mu)$, an invertible $\mu$-ergodic transformation $T:\Omega\to\Omega$, a bounded measurable $f:\Omega \to \bbR$, and we let $\{H_\omega\}_{\omega \in \Omega}$ denote the associated family of ergodic Schr\"odinger operators. In this setting, one can leverage tools, techniques, and ideas from dynamical systems to prove results that hold $\mu$-almost surely. First, one can alternatively characterize ergodicity in the following manner: if $f:\Omega \to \bbR$ is measurable and $T$\emph{-invariant} in the sense that $f\circ T = f$, then $f$ is almost-surely constant; that is, there exists $c \in \bbR$ such that $f(\omega) = c$ for $\mu$-a.e.\ $\omega \in \Omega$. In the present formalism, if $S:\ell^2(\bbZ) \to \ell^2(\bbZ)$ denotes the left shift $\delta_n \mapsto \delta_{n-1}$, it is easy to verify that
\begin{equation} \label{eq:covariance}
H_{T\omega}
=
SH_\omega S^*,
\end{equation}
so $H_{T\omega}$ is unitarily equivalent to $H_\omega$ via the shift. By an induction followed by a limiting argument, it follows that \eqref{eq:covariance} holds for sufficiently nice functions of the operator, e.g.,
\[
g(H_{T\omega})
=
Sg(H_\omega)S^*,
\quad
g \in C(\bbR).
\]
Consequently, any spectral data of $H_\omega$ is $T$-invariant so one should expect that if one can prove suitable measurability statements, then any spectral data of $H_\omega$ to be almost-surely constant with respect to the ergodic measure $\mu$. As a sample result, this holds for the spectrum as a set as well as the spectral decomposition with respect to Lebesgue measure.

\begin{theorem}
There exist compact sets $\Sigma, \Sigma_\ac,\Sigma_\sc,\Sigma_\pp \subseteq \bbR$ with the property that
\begin{align*}
\sigma(H_\omega) & = \Sigma, \; \mu\text{-almost surely} \\
\sigma_\ac(H_\omega) & = \Sigma_\ac, \; \mu\text{-almost surely} \\
\sigma_\sc(H_\omega) & = \Sigma_\sc, \; \mu\text{-almost surely} \\
\sigma_\pp(H_\omega) & = \Sigma_\pp, \; \mu\text{-almost surely}.
\end{align*}
Moreover, one also has
\[
\sigma_{\mathrm{disc}}(H_\omega)
=
\emptyset,
\; \mu\text{-almost surely,}
\]
and, for any $E \in \bbR$,
\[
\mu\{\omega \in \Omega : E \text{ is an eigenvalue of } H_\omega\}
=
0.
\]
\end{theorem}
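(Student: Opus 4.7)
The plan is to leverage the covariance relation \eqref{eq:covariance} together with ergodicity of $(T,\mu)$ in a systematic way. As a preliminary step, I would verify that $\omega \mapsto V_\omega \in \ell^\infty(\bbZ)$ is measurable (true by construction from $f$ and $T$), so that $\omega \mapsto \langle \delta_m, g(H_\omega)\delta_n\rangle$ is measurable for every bounded Borel $g\colon\bbR \to \bbC$; this is routine via resolvents and dominated convergence. The intertwining $g(H_{T\omega}) = S\,g(H_\omega)\,S^*$ follows by polynomial approximation from \eqref{eq:covariance}, and the analogous identities $P_\bullet(H_{T\omega}) = S\,P_\bullet(H_\omega)\,S^*$ for $\bullet \in \{\ac,\sc,\pp\}$ hold because the corresponding spectral subspaces are defined intrinsically and transform correctly under the unitary $S$.

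The almost-sure constancy of $\Sigma$ is then standard: fix a countable base $\{U_k\}_{k \in \bbN}$ for the topology on $\bbR$ and set
\[
A_k \defeq \set{\omega : \chi_{U_k}(H_\omega) \ne 0} = \set{\omega : \sigma(H_\omega) \cap U_k \ne \emptyset}.
\]
Each $A_k$ is measurable and $T$-invariant, so by ergodicity $\mu(A_k) \in \{0,1\}$. Setting $J \defeq \set{k : \mu(A_k)=0}$ and $\Sigma \defeq \bbR \setminus \bigcup_{k \in J} U_k$, one checks that $\sigma(H_\omega) = \Sigma$ for every $\omega$ outside a countable union of null sets. The identical argument, run with $\chi_{U_k}(H_\omega)\,P_\bullet(H_\omega)$ in place of $\chi_{U_k}(H_\omega)$, produces the closed sets $\Sigma_\bullet$ for $\bullet \in \{\ac,\sc,\pp\}$.

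The heart of the matter is the ``no fixed eigenvalue'' claim, and it is where I expect the real work. Fix $E \in \bbR$, let $P_E(\omega) \defeq \chi_{\{E\}}(H_\omega)$, and set $\rho_\omega(n) \defeq \langle \delta_n, P_E(\omega)\,\delta_n\rangle$. The covariance $P_E(T\omega) = S P_E(\omega) S^*$ combined with $S^*\delta_n = \delta_{n+1}$ gives $\rho_{T^k\omega}(0) = \rho_\omega(k)$ for every $k \in \bbZ$. On the one hand, $P_E(\omega)$ projects onto $\ker(H_\omega - E)$, which is at most two-dimensional since the formal solution space of $(H_\omega - E)u = 0$ is spanned by two initial conditions, so
\[
\sum_{n \in \bbZ} \rho_\omega(n) = \tr P_E(\omega) \leq 2,
\]
and the Cesàro averages $\frac{1}{N}\sum_{k=0}^{N-1}\rho_\omega(k)$ tend to $0$. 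On the other hand, Birkhoff's ergodic theorem yields
\[
\lim_{N\to\infty}\frac{1}{N}\sum_{k=0}^{N-1} \rho_{T^k\omega}(0) = \int_\Omega \rho_\omega(0)\,d\mu(\omega) \quad \mu\text{-a.s.}
\]
Hence $\int \rho_\omega(0)\,d\mu = 0$, so $\rho_\omega(0) = 0$ for $\mu$-a.e.\ $\omega$. Since $P_E(\omega) \geq 0$, $\rho_\omega(n) = 0$ forces $P_E(\omega)\delta_n = 0$, and the identity $\rho_\omega(n) = \rho_{T^n\omega}(0)$ combined with $T$-invariance of $\mu$ shows that for $\omega$ outside the null set $\bigcup_n T^{-n}\{\rho\ne 0\}$, every $\delta_n$ lies in $\ker P_E(\omega)$, giving $P_E(\omega) = 0$ a.s.

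Finally, $\sigma_{\mathrm{disc}}(H_\omega) = \emptyset$ a.s.\ is a direct consequence: isolated points of $\Sigma$ are countable, and each is a.s.\ an isolated point of $\sigma(H_\omega)$ and therefore an eigenvalue; applying the previous paragraph to each such candidate and taking a countable union of null sets forces $\Sigma$ to have no isolated points, hence $\sigma_{\mathrm{disc}}(H_\omega) = \emptyset$ almost surely. The main potential obstacle in the whole argument is ensuring measurability of $\omega \mapsto P_\bullet(H_\omega)$ and that the covariance identity truly survives under spectral projection; once those technicalities are disposed of, the ergodicity/summability combination in the third paragraph does all the substantive work.
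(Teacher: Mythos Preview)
Your proposal is correct and follows the standard textbook route. The paper itself does not supply a proof of this theorem; it simply attributes the result to Kunz--Souillard and Pastur and directs the reader to \cite[Chapter~9]{CFKS}. What you have written is precisely the argument one finds in those references: invariance plus ergodicity for the constancy of the spectral sets, and the Pastur trick (summable trace of the rank-bounded spectral projection $\chi_{\{E\}}(H_\omega)$ combined with Birkhoff) for the absence of fixed eigenvalues. Indeed, the paper reproduces a variant of your third paragraph in its proof of Theorem~\ref{t:DelSou}, with the minor difference that there the eigenspace bound is taken as $\leq 1$ via the constancy of the Wronskian rather than your $\leq 2$; either suffices.
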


The previous theorem summarizes results of Kunz--Souillard \cite{KS80} and Pastur \cite{Pastur1980CMP}. Consult \cite[Chapter~9]{CFKS} for proofs. See also \cite{CarmonaLacroix1990,DF201X,PasturFigotin1992:ESO}. Imposing additional assumptions on the base dynamics $(\Omega,T)$ and the sampling function $f$ can allow one to draw stronger conclusions. For example, in the case where $\Omega$ is the hull of an almost periodic sequence and $T$ is the shift map, the dynamical system $(\Omega,T)$ is \emph{minimal} in the sense that $\{T^n \omega : n \in \bbZ\}$ is dense in $\Omega$ for all $\omega \in \Omega$ and the sampling function $f$ is continuous. Under the assumptions of minimality of $(\Omega,T)$ and continuity of $f$, one can use strong operator approximation to upgrade the almost-sure spectrum to a completely-sure spectrum. That is, one has
\[
\sigma(H_\omega)
=
\Sigma
\text{ for all }\omega \in \Omega.
\]
One can prove this by using very general results, e.g.\ \cite[Theorem~VIII-1.14]{Kato1980:PertTh} or \cite[Theorem~VIII.24]{ReedSimon1}. It is a deep result of Last and Simon that this also holds true for the absolutely continuous spectrum. That is, if $(\Omega,T)$ is minimal and $f$ is continuous, then $\sigma_\ac(H_\omega) = \Sigma_\ac$ for \emph{all} $\omega \in \Omega$ \cite{LastSim1999Invent}.

\subsection{The Lyapunov Exponent}

Motivated by \eqref{eq:diffEq2}, define
\[
A_z(\omega)
=
\begin{bmatrix} z - f(T\omega) & -1 \\
1 & 0
\end{bmatrix},
\quad
\omega \in \Omega, \; z \in \bbC.
\]
For $n \in \bbZ$, let
\[
A_z^n(\omega)
=
\begin{cases}
A_z(T^{n-1}\omega) A_z(T^{n-2}\omega) \cdots A_z(\omega) & n > 0,\\
I & n = 0,\\
A_z^{-n}(T^n\omega)^{-1} & n < 0.
\end{cases}
\]
Thus, if $u \in \bbC^\bbZ$ solves $H_\omega u = zu$ in the difference equation sense, one has
\[
\begin{bmatrix} u(n+1) \\ u(n) \end{bmatrix}
=
A_z^n(\omega)
\begin{bmatrix} u(1) \\ u(0) \end{bmatrix}
\]
for all $n \in \bbZ$.

\begin{definition}

The \emph{Lyapunov exponent} of the ergodic family is defined by
\[
L(z)
=
\lim_{n\to\infty} \frac{1}{n} \int \log\|A_z^n(\omega) \| \, d\mu(\omega),
\quad
z \in \bbC.
\]
\end{definition}

By Kingman's subadditive ergodic theorem \cite{Kingman1973}, for each fixed $z \in \bbC$,
\begin{equation} \label{eq:kingmanConseq}
L(z)
=
\lim_{n\to\infty} \frac{1}{n} \log \| A_z^n(\omega)\|
\end{equation}
for $\mu$-a.e.\ $\omega \in \Omega$. In general, one cannot reverse the quantifiers. That is to say, it is not the case in general that there exists a $z$-independent full-measure set of $\Omega_{\mathrm{L}} \subseteq \Omega$ with the property that \eqref{eq:kingmanConseq} holds simultaneously for all $z \in \bbC$ and every $\omega \in \Omega_{\mathrm{L}}$. In general, one can partially reverse the quantifiers by applying Fubini's theorem on a suitable product space. For example, one can say that, for $\mu$-a.e.\ $\omega$, one has \eqref{eq:kingmanConseq} for Lebesgue a.e.\ $E \in \bbR$.
\bigskip

\subsection{The Density of States}

Next, we consider the density of states (DOS) in the ergodic setting. For each $\omega \in \Omega$ and each $N \in \bbZ_+$, we may define $dk_{\omega,N}$ as in \eqref{eq:DOSfirstdef}, that is,
\[
\int g(E) \, dk_{\omega,N}
=
\frac{1}{N} \tr(P_N \, g(H_\omega) \, P_N^*),
\]
for continuous $g$, where $P_N$ is projection onto coordinates in $[0,N)$. In this setting, the weak limit of $dk_{\omega,N}$ (i.e.\ the DOS) exists $\mu$-almost surely and is deterministic.

\begin{theorem} \label{t:DOSexists}
There is a deterministic probability measure $dk$ and full-measure subset $\Omega_{\mathrm{DOS}} \subseteq \Omega$ with the property that $dk_{\omega,N}$ converges weakly to $dk$ for all $\omega \in \Omega_{\mathrm{DOS}}$. Moreover,
\begin{equation} \label{eq:ergDOSdef}
\int g \, dk
=
\int_\Omega \langle \delta_0, g(H_\omega) \delta_0 \rangle \, d\mu(\omega)
\end{equation}
for $g \in C(\bbR)$.
\end{theorem}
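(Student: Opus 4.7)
The plan is to reduce the weak convergence statement to an application of Birkhoff's pointwise ergodic theorem via the covariance relation \eqref{eq:covariance}. For a bounded continuous $g$, define
\[
F_g(\omega) \defeq \langle \delta_0, g(H_\omega) \delta_0 \rangle.
\]
The measurability and boundedness of $F_g$ follow from the functional calculus together with measurability of $\omega \mapsto H_\omega$ (applied first to polynomial $g$ and then extended by Stone-Weierstrass, using the fact that $\sigma(H_\omega)$ is contained in the compact interval $K \defeq [-\|f\|_\infty - 2, \|f\|_\infty + 2]$). Iterating \eqref{eq:covariance} yields $g(H_{T^n\omega}) = S^n g(H_\omega) (S^*)^n$, and since $(S^*)^n \delta_0 = \delta_n$, we obtain the key identity $\langle \delta_n, g(H_\omega) \delta_n \rangle = F_g(T^n \omega)$. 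Summing over $n$, this rewrites the finite-$N$ averages as Birkhoff averages:
\[
\int g \, dk_{\omega, N} = \frac{1}{N} \sum_{n=0}^{N-1} F_g(T^n \omega).
\]

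Step two is to apply Birkhoff's theorem to each such $F_g$. For each fixed $g$, the pointwise ergodic theorem together with $\mu$-ergodicity of $T$ produces a full-measure set $\Omega_g \subseteq \Omega$ on which the right-hand side converges to $\int_\Omega F_g \, d\mu$. The obstacle is that a priori $\Omega_g$ depends on $g$, whereas the theorem demands a single $\omega$-set for \emph{all} continuous test functions simultaneously. This is where the compactness of $K$ pays off: the space $C(K)$ is separable, so we may choose a countable dense sequence $\{g_k\}_{k \ge 1} \subseteq C(K)$ and set
\[
\Omega_{\mathrm{DOS}} \defeq \bigcap_{k \ge 1} \Omega_{g_k},
\]
which still satisfies $\mu(\Omega_{\mathrm{DOS}}) = 1$.

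For $\omega \in \Omega_{\mathrm{DOS}}$ and arbitrary $g \in C(K)$, I would approximate $g$ uniformly by elements of $\{g_k\}$ and use the fact that each $dk_{\omega,N}$ is a probability measure supported in $K$ (so the approximation error is uniformly controlled by $\|g - g_k\|_\infty$). This upgrades convergence from the countable dense set to all of $C(K)$, which, since all measures live inside $K$, is equivalent to weak convergence against $C_b(\bbR)$. The candidate limit is the positive linear functional $g \mapsto \int_\Omega F_g \, d\mu$, which by Riesz representation corresponds to a Borel probability measure $dk$ on $K$; this also yields the formula \eqref{eq:ergDOSdef}. The fact that $dk$ is a probability measure follows by taking $g \equiv 1$.

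The main subtlety is the uniformity across test functions, which I expect to be the principal (though entirely routine) technical point; every other ingredient---the covariance identity, boundedness of $F_g$, and uniform support in $K$---is straightforward. A minor secondary point worth verifying carefully is the measurability of $\omega \mapsto F_g(\omega)$ for continuous $g$, but this is standard given the weak-operator measurability of $\omega \mapsto H_\omega$.
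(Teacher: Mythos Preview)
Your proposal is correct and follows essentially the same route as the paper: rewrite $\int g\,dk_{\omega,N}$ as a Birkhoff average of $\omega\mapsto\langle\delta_0,g(H_\omega)\delta_0\rangle$ via covariance, apply the pointwise ergodic theorem, and then pass from a countable dense subset of $C(K)$ to all continuous $g$ by an $\varepsilon/3$ argument using that all the $dk_{\omega,N}$ are probability measures supported in the fixed compact $K$. The paper's proof is slightly terser but the ingredients and logical structure are identical.
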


\begin{proof}
Let us consider a continuous $g:\bbR \to \bbR$ and define $\widetilde{g}:\Omega \to \bbR$ by
\[
\widetilde{g}(\omega)
=
\langle \delta_0, g(H_\omega) \delta_0\rangle.
\]
By definition of $dk_{\omega,N}$, we then observe that
\begin{align*}
\int g \, dk_{\omega,N}
& =
\frac{1}{N} \tr(P_N \, g(H_\omega) \, P_N^*) \\
& =
\frac{1}{N} \sum_{n=0}^{N-1} \langle \delta_n, g(H_\omega) \delta_n \rangle.
\end{align*}
Applying \eqref{eq:covariance}, we obtain
\begin{equation} \label{eq:DOSlim}
\int g \, dk_{\omega,N}
=
\frac{1}{N} \sum_{n=0}^{N-1} \widetilde{g}(T^n\omega).
\end{equation}

By Birkhoff's ergodic theorem, we know that the right-hand side converges to the integral of $\widetilde g$ $\mu$-almost surely, and hence \eqref{eq:ergDOSdef} holds by definition of $\widetilde{g}$. This provides a $g$-dependent full measure set $\Omega_g$ on which \eqref{eq:ergDOSdef} holds; to obtain $\Omega_{\mathrm{DOS}}$, note that boundedness of the sampling function $f$ implies that there is a uniform compact set $K$ that contains the support of $dk_{\omega,N}$ for all $\omega$ and $N$; then, choose a countable dense set $\{g_n : n \ge 1\} \subseteq C(K,\bbR)$, take
\[
\Omega_{\mathrm{DOS}}
=
\bigcap_n \Omega_{g_n},
\]
and apply an $\varepsilon/3$ argument.
\end{proof}

For almost-periodic (and in particular limit-periodic) operators, the situation is even better. Since normalized Haar measure is the unique invariant measure on the hull, an argument using \emph{unique} ergodicity implies that $dk_{\omega,N}$ converges weakly to $dk$ for \emph{all} $\omega$ in the hull. Specifically, unique ergodicity implies that the limit of the quantity on the right hand side of \eqref{eq:DOSlim} exists for all $\omega$, not just $\mu$-a.e.\ $\omega \in \Omega$. Consequently, if $V$ is limit-periodic, then $dk_{V,N}$ converges weakly to $dk$, and we may speak of ``the density of states of $V$'' without worry.

We will be concerned with the regularity of the DOS and IDS.  In full generality, the IDS is continuous \cite{DelSou1984CMP}.

\begin{theorem} \label{t:DelSou}
For any ergodic family, the integrated density of states is a continuous function of $E$. Equivalently, the DOS is an atomless measure.
\end{theorem}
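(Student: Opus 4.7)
The plan is to show $dk(\{E\}) = 0$ for every fixed $E \in \bbR$, which is equivalent to continuity of the IDS and to the DOS being atomless. The main tool is the integral representation
\[
\int h \, dk = \int_\Omega \langle \delta_0, h(H_\omega) \delta_0 \rangle \, d\mu(\omega),
\]
which Theorem~\ref{t:DOSexists} establishes for $h \in C(\bbR)$. First I would extend this identity to $h = \chi_{\{E\}}$: boundedness of the sampling function $f$ confines the spectra (and hence the relevant measures) to a common compact interval $K \subseteq \bbR$, so both sides define Borel measures on $K$ that agree on $C(K)$ and hence, by Riesz representation, on all Borel sets. Taking $h = \chi_{\{E\}}$ yields
\[
dk(\{E\}) = \int_\Omega \langle \delta_0, P_\omega(\{E\}) \delta_0\rangle \, d\mu(\omega),
\]
where $P_\omega(\{E\})$ is the spectral projection of $H_\omega$ onto $\{E\}$.

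Next I would exploit the covariance \eqref{eq:covariance}, which gives $P_{T\omega}(\{E\}) = S\, P_\omega(\{E\})\, S^*$ and therefore
\[
\langle \delta_0, P_{T^n\omega}(\{E\}) \delta_0 \rangle = \langle \delta_n, P_\omega(\{E\}) \delta_n \rangle \quad \text{for all } n \in \bbZ.
\]
Setting $g(\omega) = \langle \delta_0, P_\omega(\{E\}) \delta_0\rangle$ and applying Birkhoff's ergodic theorem to $g$, I obtain, for $\mu$-a.e.\ $\omega$,
\[
\frac{1}{N} \tr\bigl( P_N\, P_\omega(\{E\})\, P_N^* \bigr) = \frac{1}{N} \sum_{n=0}^{N-1} g(T^n \omega) \longrightarrow dk(\{E\}) \quad \text{as } N \to \infty.
\]

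It remains to bound the trace truncation. The operator $P_\omega(\{E\})$ is the orthogonal projection onto $\ker(H_\omega - E) \subseteq \ell^2(\bbZ)$, and this kernel is at most one-dimensional: if $u, v$ were two linearly independent $\ell^2(\bbZ)$ solutions of $H_\omega u = E u$, the discrete Wronskian $W(u,v)(n) = u(n+1) v(n) - u(n) v(n+1)$ would be a nonzero constant in $n$, yet $u(n), v(n) \to 0$ as $n \to \infty$ forces $W(u,v)(n) \to 0$, a contradiction. Consequently $\tr P_\omega(\{E\}) \leq 1$ for every $\omega$, so
\[
\frac{1}{N} \tr\bigl( P_N\, P_\omega(\{E\})\, P_N^* \bigr) \leq \frac{1}{N} \tr P_\omega(\{E\}) \leq \frac{1}{N},
\]
which tends to $0$ as $N \to \infty$. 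This forces $dk(\{E\}) = 0$.

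The main obstacle is conceptual rather than technical: one must cleanly extend the DOS representation from $h \in C(\bbR)$ to the indicator $\chi_{\{E\}}$, which requires checking measurability of $\omega \mapsto \langle \delta_0, P_\omega(\{E\})\delta_0\rangle$ and is most efficiently handled by viewing both sides as Borel measures on a common compact interval. Once this representation is in hand, the Wronskian rigidity of $\ell^2(\bbZ)$ eigenspaces for 1D Schr\"odinger operators reduces matters to an $O(1/N)$ bound and the conclusion follows immediately.
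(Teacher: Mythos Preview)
Your proposal is correct and follows essentially the same route as the paper: extend the identity \eqref{eq:ergDOSdef} to $h = \chi_{\{E\}}$, apply Birkhoff's theorem to rewrite $dk(\{E\})$ as $\lim_N \tfrac{1}{N}\tr(P_N P_\omega(\{E\}) P_N^*)$, and use that $\tr P_\omega(\{E\}) \le 1$ to conclude. The only cosmetic differences are that the paper extends to indicators via a monotone sequence $g_n \downarrow \chi_{\{E_0\}}$ and dominated convergence (rather than invoking Riesz), and it leaves the one-dimensionality of $\ker(H_\omega - E)$ implicit where you spell out the Wronskian argument.
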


\begin{proof}
Fix $E_0 \in \bbR$ and suppose $g_n$ are continuous compactly supported functions with $g_n(E_0) = 1$ and $g_n(E) \downarrow 0$ for $E \not= E_0$. Then, by dominated convergence,
\begin{equation}\label{fnlimpos}
\int g_n \, dk \to \int \chi_{\{E_0\}} \, dk.
\end{equation}
Denoting $A_\omega = \chi_{\{E_0\}}(H_\omega)$, we then also have $\langle \delta_0 , g_n(H_\omega) \delta_0 \rangle \to \langle \delta_0 , A_\omega \delta_0 \rangle$ for all $\omega$, and hence, again by dominated convergence,
\begin{align*}
\int g_n \, dk
& =
\int  \langle \delta_0 , g_n(H_\omega) \delta_0 \rangle  \, d\mu(\omega) \\
& \to
\int  \langle \delta_0 ,
A_\omega \delta_0 \rangle  \, d\mu(\omega).
\end{align*}
For $\omega$'s from a set of full $\mu$-measure,\footnote{To get this full-measure set, apply the argument from Theorem~\ref{t:DOSexists} to the function $g = \chi_{\{E_0\}}$.}
$$
\int  \langle \delta_0 , A_\omega \delta_0 \rangle  \, d\mu(\omega)
 =
 \lim_{N \to \infty} \frac{1}{N} \, \tr (P_N \, A_\omega \, P_N^*).
$$
Since $\tr(P_N A_\omega P_N^*)$ is bounded by $1$, the limit is zero. Thus, $dk(\{E_0\}) = 0$, and $dk$ is a continuous measure. Since $k$ is the accumulation function of $dk$, continuity of $k$ follows immediately.
\end{proof}

\begin{theorem}\label{t.as83}
The almost sure spectrum is given by the points of increase of $k$, that is, $\supp(dk) = \Sigma$.
\end{theorem}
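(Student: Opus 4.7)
The plan is to prove the two inclusions $\supp(dk) \subseteq \Sigma$ and $\Sigma \subseteq \supp(dk)$ separately, using the identity
\[
\int g \, dk = \int_\Omega \langle \delta_0, g(H_\omega) \delta_0 \rangle \, d\mu(\omega)
\]
from Theorem~\ref{t:DOSexists} as the main tool. First, I would extend this identity from $g \in C(\bbR)$ to indicator functions $g = \chi_U$ of bounded open sets $U$, which is a routine monotone-convergence argument: choose $g_n \in C_c(\bbR)$ with $0 \le g_n \uparrow \chi_U$ pointwise, apply the identity to each $g_n$, and let $n \to \infty$ on both sides using monotone convergence (for the spectral measure $\langle \delta_0, E_{H_\omega}(\cdot) \delta_0 \rangle$ on the right, and for $dk$ on the left).

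For the inclusion $\supp(dk) \subseteq \Sigma$, let $\Omega_\Sigma$ denote the set of full $\mu$-measure on which $\sigma(H_\omega) = \Sigma$, and suppose $E \notin \Sigma$. Pick a bounded open neighborhood $U$ of $E$ with $\overline U \cap \Sigma = \emptyset$, together with $g \in C_c(\bbR)$ satisfying $\chi_U \le g \le 1$ and $\supp(g) \cap \Sigma = \emptyset$. Then $g(H_\omega) = 0$ for every $\omega \in \Omega_\Sigma$, so $dk(U) \le \int g \, dk = 0$ by the identity, and hence $E \notin \supp(dk)$.

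For the reverse inclusion $\Sigma \subseteq \supp(dk)$, suppose $E \notin \supp(dk)$ and pick a bounded open neighborhood $U$ of $E$ with $dk(U) = 0$. The extended identity reads
\[
0 = dk(U) = \int_\Omega \langle \delta_0, \chi_U(H_\omega) \delta_0 \rangle \, d\mu(\omega),
\]
so the nonnegative integrand vanishes on a full-measure set $\Omega_1 \subseteq \Omega$. The covariance \eqref{eq:covariance} propagates through the functional calculus to give $\chi_U(H_{T^n\omega}) = S^n \chi_U(H_\omega) (S^*)^n$, and since $(S^*)^n \delta_0 = \delta_n$, this yields $\langle \delta_0, \chi_U(H_{T^n\omega}) \delta_0 \rangle = \langle \delta_n, \chi_U(H_\omega) \delta_n \rangle$. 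Combining with the $T$-invariance of $\mu$, we see that $\langle \delta_n, \chi_U(H_\omega) \delta_n \rangle = 0$ for every $n \in \bbZ$ and every $\omega$ in the full-measure set $\Omega_2 := \bigcap_{n \in \bbZ} T^{-n} \Omega_1$. Since $\chi_U(H_\omega) \ge 0$ and its diagonal entries in the orthonormal basis $\{\delta_n\}_{n \in \bbZ}$ all vanish, we conclude $\chi_U(H_\omega) = 0$, and hence $U \cap \sigma(H_\omega) = \emptyset$, for every $\omega \in \Omega_2$. Intersecting with $\Omega_\Sigma$ forces $U \cap \Sigma = \emptyset$, so $E \notin \Sigma$.

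The first direction is essentially immediate from the functional calculus once one restricts to $\omega \in \Omega_\Sigma$; the main substance of the proof lies in the reverse inclusion, where the key obstacle is to upgrade the integrated statement $dk(U) = 0$ into the pointwise-in-$\omega$ statement $\chi_U(H_\omega) = 0$ almost surely. This is precisely what the covariance-plus-positivity argument accomplishes, exploiting that positivity of an operator on $\ell^2(\bbZ)$ is detected by its diagonal in any orthonormal basis.
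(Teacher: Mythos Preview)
Your proof is correct and follows essentially the same approach as the paper's: both directions use the identity \eqref{eq:ergDOSdef} extended to indicator functions, and the key step in the reverse inclusion is the covariance-plus-positivity argument showing that all diagonal entries of $\chi_U(H_\omega)$ vanish almost surely. The only cosmetic difference is that the paper phrases the conclusion as $\tr\,\chi_I(H_\omega)=0$ rather than ``positive operator with zero diagonal,'' but these are the same observation.
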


\begin{proof}
If $E_0 \not\in \Sigma$, there is an open interval $I$ containing $E_0$ with $I \cap \Sigma = \emptyset$. We then have $\chi_I(H_\omega) = 0$ for a.e.\ $\omega$ and hence
$$
\int \chi_I \, dk
=
\int_\Omega  \langle \delta_0 , \chi_I(H_\omega) \delta_0 \rangle  \, d\mu(\omega)
=
0.
$$
Thus, $E_0 \not\in \supp (dk)$.

Conversely, if $E_0 \not\in \supp (dk)$, there is an interval $I$ containing $E_0$ such that $I \cap \supp(dk) = \emptyset$. Then,
\begin{align*}
\int \chi_I \, dk
& =
\int  \langle \delta_0 , \chi_I(H_\omega) \delta_0 \rangle  \, d\mu(\omega) \\
& =
\int  \langle \delta_0 , \chi_I(H_{T^n \omega}) \delta_0 \rangle  \, d\mu(\omega)\\
& =
\int  \langle \delta_n , \chi_I(H_\omega) \delta_n \rangle  \, d\mu(\omega).
\end{align*}
By positivity, we obtain
\[
\dim \Ran \, \chi_I(H_\omega)
=
\tr \,\chi_I(H_\omega)
=
0
\]
for $\mu$-a.e.\ $\omega \in \Omega$ and hence $E_0 \not\in \Sigma$.
\end{proof}

The density of states and the Lyapunov exponent are related via the \emph{Thouless formula}:
\[
L(z)
=
\int \log|E-z| \, dk(E),
\quad
z \in \bbC.
\]
This was discovered on an intuitive basis by Thouless in the early 1970s \cite{Thouless1972JPhysC}, with the first rigorous proof due to Avron--Simon \cite{AvrSim1983DMJ}.

\section{Absolutely Continuous Spectrum}\label{s.ac}

In this section, we shall consider the limit-periodic potentials in the perturbative regime.  Concretely, we will consider potentials $V$ with $q_n$-periodic approximants such that
\begin{equation} \label{eq:ptdef}
\lim_{n \to \infty} e^{bq_{n+1}} \|V_n- V\|_\infty
=
0
\text{ for every } b>0.
\end{equation}
In this case, we will write $V \in \PT$, after the contributions of \cite{PT1, PT2}; the inverse spectral problem for Jacobi matrices satisfying a condition like \eqref{eq:ptdef} was studied by Egorova \cite{egorova}. See also~\cite{Chul81,Chul84,MC84}. The main point of the present section is that potentials in $\PT$ behave extremely similarly to periodic potentials.

We consider first the spectrum as a set. In the case when $V$ is $q$-periodic, we have seen that the spectrum consists of $q$ nondegenerate closed intervals, each of which is given as the closure of a connected component of $D^{-1}((-2,2))$, where $D$ is the Floquet discriminant. The intervals may touch at the endpoints but do not overlap otherwise. It should not  be surprising that this behavior does not persist for genuine aperiodic limit-periodic models. The number of spectral bands grows with the period, and the spectrum must lie within the interval
\[
I = [-2-\|V\|_\infty, 2+\|V\|_\infty],
\]
and so we cannot expect $\sigma(H)$ to consist of nontrivial closed intervals when $V$ is aperiodic. Indeed, one can get Cantor sets for $V \in \PT$:

\begin{theorem}
For a dense set $\mathcal{C} \subseteq \PT$, $\sigma(H_V)$ is a Cantor set for all $V \in \mathcal{C}$.
\end{theorem}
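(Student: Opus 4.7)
The strategy is to produce elements of $\mathcal{C}$ as $\ell^\infty$-limits of periodic potentials whose spectra become progressively more fragmented, with the periods and perturbation sizes controlled to keep the limit inside $\PT$ and arbitrarily close in $\ell^\infty(\bbZ)$ to any prescribed $V \in \PT$. Thus, the existence part and density part are handled simultaneously by a single iterative construction.

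The main technical ingredient is a \emph{gap-opening lemma}: given a $q$-periodic potential $U$ and positive numbers $\eta, \delta$, there exist an integer $p \ge 1$ and a $pq$-periodic potential $\tilde U$ with $\|\tilde U - U\|_\infty < \eta$ such that $\sigma(\tilde U)$ consists of $pq$ nondegenerate closed intervals, each of length less than $\delta$. I would prove this by first viewing $U$ itself as a $pq$-periodic potential: by the Bloch theory of Section~\ref{ss:bloch} and the identity \eqref{eq:discriminantCharPoly}, each band of $\sigma(U)$ refines into $p$ subbands meeting at the endpoints where $D_U(E) = \pm 2$, with all subband widths controlled by the original band width divided by $p$. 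Choose $p$ so large that every subband has length at most $\delta/2$; then add a small $pq$-periodic perturbation $W$ with $\|W\|_\infty < \eta$ chosen so that all $q(p-1)$ closed gaps open. Such a $W$ exists because closed gaps correspond to double roots of $D - 2\cos\theta$ at $\theta \in \{0,\pi\}$, i.e., to multiplicity-$2$ eigenvalues of $H_0$ and $H_\pi$ — a non-generic codimension condition within the finite-dimensional space of $pq$-periodic perturbations.

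With the lemma in hand, fix $V \in \PT$ and $\varepsilon > 0$. Choose a periodic $V^{(0)}$ with $\|V - V^{(0)}\|_\infty < \varepsilon/4$, and inductively apply the lemma to construct a periodic $V^{(k)}$ of period $Q_k$ satisfying: (i) every band of $\sigma(V^{(k)})$ has length at most $2^{-k}$; (ii) every gap of $\sigma(V^{(k)})$ is open, with minimum gap width $g_k > 0$; (iii) $\|V^{(k)} - V^{(k-1)}\|_\infty < \eta_k$, where $\eta_k$ is chosen small enough that $\sum_k \eta_k < \varepsilon/4$, that $\sum_{j > k}\eta_j < g_k/2$ for every $k$, and that $e^{bQ_{k+1}}\eta_k \to 0$ for every $b>0$. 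The $\PT$-decay condition in (iii) can always be enforced, because $\eta_k$ may be taken arbitrarily small without disturbing the bands-and-gaps structure of $V^{(k)}$ delivered by the lemma. The sequence $V^{(k)}$ then converges uniformly to some $\tilde V \in \PT$ with $\|\tilde V - V\|_\infty < \varepsilon$, establishing density of $\mathcal{C}$ in $\PT$.

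It remains to verify that $\sigma(\tilde V)$ is a Cantor set. Since spectra depend $1$-Lipschitz continuously on the potential in Hausdorff distance (a standard consequence of the resolvent bound), every gap of $\sigma(V^{(k)})$ of width at least $g_k$ survives as a gap of $\sigma(\tilde V)$ of width at least $g_k - 2\sum_{j > k}\eta_j > 0$. Combined with (i), this shows that no interval of length exceeding $2^{-k} + 2\sum_{j > k}\eta_j$ can be contained in $\sigma(\tilde V)$, so $\sigma(\tilde V)$ has empty interior. Aperiodicity of $\tilde V$ is forced by $Q_k \to \infty$ together with the persistence of newly opened gaps that preclude any shorter period; this aperiodicity, together with Theorems~\ref{t:DelSou} and \ref{t.as83}, rules out isolated points in $\sigma(\tilde V)$. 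Hence $\sigma(\tilde V)$ is a perfect, nowhere-dense compact subset of $\bbR$, i.e., a Cantor set. The principal obstacle will be the quantitative part of the gap-opening lemma — verifying that a \emph{single} perturbation of size at most $\eta$ opens \emph{all} $q(p-1)$ closed gaps simultaneously with effective lower bounds $g_k$ on the resulting widths, large enough that the summability and $\PT$-decay constraints on the later $\eta_j$'s can still be met. This requires a careful first-order perturbation analysis of the degenerate eigenvalues of $H_{\theta = 0}$ and $H_{\theta = \pi}$ produced by the $p$-fold folding of the original bands.
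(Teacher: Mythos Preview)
The paper states this theorem without proof, so there is nothing to compare against directly. Your approach---iteratively opening gaps in periodic approximants while controlling perturbation sizes so that the limit remains in $\PT$---is the standard route to results of this type and is essentially correct.

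One point deserves tightening: condition~(iii) asks that $e^{bQ_{k+1}}\eta_k \to 0$, but $Q_{k+1}$ is not yet determined when you choose $\eta_k$, since the multiplier $p_{k+1}$ produced by the gap-opening lemma at step $k+1$ depends on the band structure of $V^{(k)}$. The fix is to observe that in your lemma the new period $Q_k = p_k Q_{k-1}$ is fixed \emph{before} selecting the perturbation $W$, so you may instead impose $\eta_k \le e^{-kQ_k}$ at step $k$; since $Q_j$ is nondecreasing, one then has
\[
\|V^{(n)} - \tilde V\|_\infty \;\le\; \sum_{j>n} e^{-jQ_j} \;\le\; 2\,e^{-(n+1)Q_{n+1}},
\]
which yields the $\PT$ condition directly. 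A minor remark: the appeal to aperiodicity when excluding isolated points is unnecessary---Theorems~\ref{t:DelSou} and~\ref{t.as83} already rule out isolated spectral points for any almost-periodic potential.
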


However, for $V \in \PT$, the spectrum of $H$ is thick in a precise sense that is nice from the point of view of inverse spectral theory. Concretely, one says that a set $\Sigma \subseteq \bbR$ is $\tau$\emph{-homogeneous} if there exists $\delta_0 > 0$ with the property that
\[
\Leb(\Sigma \cap (x - \delta, x+\delta))
\geq
\delta \tau
\]
for all $0 < \delta \leq \delta_0$ and all $x \in \Sigma$. For instance, it is easy to see that $\sigma(H)$ is 1-homogenous if $V$ is periodic. The property of homogeneity persists for all $V \in \PT$, which is due to Fillman and Lukic \cite{FL2017}.

\begin{theorem}
If $V \in \PT$, then $\sigma(H_V)$ is $\tau$-homogeneous for every $\tau<1$.
\end{theorem}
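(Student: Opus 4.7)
The plan is to compare $\Sigma := \sigma(H_V)$ with the periodic spectra $\Sigma_n := \sigma(H_{V_n})$ at a scale adapted to $n$, exploiting the super-exponential convergence \eqref{eq:ptdef}. Write $a_n := \|V - V_n\|_\infty$, so that $a_n \le C_b\, e^{-b q_{n+1}}$ for every $b>0$ and all large $n$. Standard minimax perturbation theory gives the Hausdorff bound $d_H(\Sigma,\Sigma_n) \le a_n$ between spectra, and likewise $d_H(\Sigma_n,\Sigma_{n+1}) \le a_n + a_{n+1}$. Since $\Sigma_{n+1}$ decomposes into at most $q_{n+1}$ closed bands, the total Lebesgue measure of gaps newly opened in passing from $\Sigma_n$ to $\Sigma_{n+1}$ is at most $2 q_{n+1}(a_n + a_{n+1})$. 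Iterating over $k \ge n$, the total length of gaps of $\Sigma$ born at generations strictly greater than $n$ is bounded by
\[
2 \sum_{k \ge n} q_{k+1}(a_k + a_{k+1}) \le C'_b \sum_{k \ge n} q_{k+1} e^{-b q_{k+1}},
\]
which is super-exponentially small in $q_n$ by \eqref{eq:ptdef}.

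Fix $\tau < 1$ and set $\eta := (1-\tau)/3$. Given $x \in \Sigma$ and a small scale $\delta > 0$, I would choose $n = n(\delta)$ to be the smallest index with $a_n \le \eta\delta$. Then the total length of generation-$>n$ gaps inside $(x-\delta,x+\delta)$ is at most $\eta\delta$ once $\delta$ is below a threshold $\delta_0 = \delta_0(\tau)$. Moreover $\Sigma$ and $\Sigma_n$ are within Hausdorff distance $a_n \le \eta\delta$, so the problem reduces to showing that the bands of $\Sigma_n$—each carrying DOS mass $1/q_n$ by \eqref{eq:per:bandmeas}—fill at least a $(1-\eta)$ fraction of any such interval centered within $\eta\delta$ of $\Sigma_n$.

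The main obstacle is exactly this lower bound on band lengths. The natural route is an inductive argument: one would show, level by level, that a band of $\Sigma_n$ cannot have width much smaller than $a_{n-1}$, since this would contradict the Hausdorff bound $d_H(\Sigma_n,\Sigma_{n-1}) \le a_{n-1}+a_n$ combined with the fact that each band of $\Sigma_{n-1}$ carries DOS mass $1/q_{n-1}$ (Theorem~\ref{t.periodic.ids}). In parallel, one must estimate the cumulative length of gaps of $\Sigma_n$ lying inside $(x-\delta,x+\delta)$; these are gaps opened at generations $\le n$, and their contribution is dominated by a sum analogous to the one above, terminated at level $n$. The super-exponential gain in \eqref{eq:ptdef} is what provides the margin needed to beat the polynomial growth in $q_n$ arising from band counting, and yields the desired bound $\Leb(\Sigma \cap (x-\delta,x+\delta)) \ge \tau\delta$. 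The delicate point is making the inductive band-width control uniform in $n$, so that it survives all generations up to the chosen $n(\delta)$; the $\PT$ rate is tailored precisely to guarantee this.
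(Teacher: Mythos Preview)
Your overall architecture---fix a scale $\delta$, pick an approximant $\Sigma_n$ adapted to that scale, and bound the total length of gaps opened at later generations by a telescoping band-counting sum---is exactly the skeleton of the paper's argument. The genuine gap is the ``main obstacle'' you identify, and your proposed resolution of it does not work.

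The paper does \emph{not} derive the band-width lower bound inductively. It imports it as an a~priori fact from periodic Floquet theory: there is a constant $K=K(\|V\|_\infty)$ such that every band of a $q$-periodic Schr\"odinger operator has length at least $K^{-q}$. This is what drives the choice of scale: one picks $n$ so that $K^{-q_{n+1}}<\delta\le K^{-q_n}$, which guarantees that $\delta$ does not exceed the width of the band of $\Sigma_n$ containing (a point near) $x$, and simultaneously that at most $\delta K^{q_{\ell+1}}$ bands of $\Sigma_{\ell+1}$ can fit in an interval of length $\delta$. The PT hypothesis then enters precisely to make $\sum_\ell K^{q_{\ell+1}}\|V_\ell-V_{\ell+1}\|_\infty$ small.

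Your inductive argument---``a band of $\Sigma_n$ cannot have width much smaller than $a_{n-1}$ by Hausdorff distance and DOS mass''---is not valid. Hausdorff closeness of $\Sigma_n$ to $\Sigma_{n-1}$ says nothing about the widths of individual bands of $\Sigma_n$: a very thin band can sit inside, or adjacent to, a fat band of $\Sigma_{n-1}$ without contradicting any Hausdorff or DOS bound. Band widths are determined by $V_n$ alone and bear no a~priori relation to $a_{n-1}=\|V-V_{n-1}\|_\infty$. Correspondingly, your choice of $n$ as the smallest index with $a_n\le\eta\delta$ puts you at the wrong scale: you have matched $\delta$ to the approximation error rather than to the band structure, so there is no reason $B_\delta(x)$ should contain a large chunk of a single band of $\Sigma_n$. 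Once you plug in the $K^{-q}$ band-length bound and recalibrate the choice of $n$ accordingly, the rest of your outline becomes the paper's proof.
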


\begin{proof}
We will describe how to prove homogeneity with $\tau = 1/2$; modifying the proof for $1/2 < \tau < 1$ involves fiddling with constants. If $V \in \PT$, let $V_j \to V$ be such that \eqref{eq:ptdef} holds, and abbreviate $H_j = \Delta + V_j$. There is a constant that only depends on $\|V\|_\infty$ with the property that each band of $\sigma(H_j)$ has length at least $K^{-q_j}$.
Using the PT condition, we may remove finitely many terms of the sequence $\{V_n\}_{n=1}^\infty$ and renumber to ensure that
\begin{equation} \label{eq:smalltail}
\sum_{n=1}^\infty
K^{q_{n+1}} \|V_n - V_{n+1}\|_\infty
<
\frac{1}{10}.
\end{equation}
Put $\delta_0 = K^{-q_1}$. We will prove the following estimate:
\begin{equation} \label{eq:sbs:homog:est}
|B_\delta(x) \cap \Sigma_N|
\geq
\delta/2
\text{ for all }
x \in \Sigma_N
\text{ and every }
0 < \delta \leq \delta_0
\end{equation}
for all $N \in \bbZ_+$. To that end, fix $N \in \bbZ_+$, $x \in \Sigma_N$, and $0 < \delta \leq \delta_0$. If $\delta \leq K^{-q_N}$, \eqref{eq:sbs:homog:est} is an obvious consequence of our choice of $K$, since $\delta$ is less than the length of the band of $\Sigma_N$ which contains $x$ in this case.  Otherwise, $\delta > K^{-q_N}$, and there is a unique integer $n$ with $1 \leq n \leq N-1$ such that
\begin{equation}\label{deltachoice}
K^{-q_{n+1}}
<
\delta
\leq
K^{-q_n}.
\end{equation}
This integer $n$ is relevant, as it determines the periodic approximant corresponding to the length scale $\delta$. More precisely, by our choice of $K$, any band of $\Sigma_n$ has length at least $\delta$. On the other hand, by Lemma~\ref{l:spec:pert}, there exists $x_0 \in \Sigma_n$ with
\begin{equation}
|x-x_0|
\leq
\sum_{\ell = n}^{N - 1}
\|V_\ell - V_{\ell+1}\|_\infty.
\end{equation}
 Using \eqref{deltachoice}, we deduce
\begin{align}
\nonumber
\lvert x-x_0 \rvert
& \leq
\sum_{\ell = n}^{N - 1}
\|V_\ell - V_{\ell+1}\|_\infty \\
\nonumber
& < \delta K^{q_{n+1}}  \sum_{\ell= n}^{N - 1}  \|V_\ell - V_{\ell+1}\|_\infty \\
\nonumber
& < \delta  \sum_{\ell= n}^{N - 1} K^{q_{\ell+1}}  \|V_\ell - V_{\ell+1}\|_\infty \\
\label{eq:expsum2}
& < \frac{\delta}{10}.
\end{align}
Thus, there exists an interval $I_0$ with $x_0 \in I_0 \subseteq B_{\delta}(x) \cap \Sigma_n$ such that
$$
\Leb(I_0)
=
\delta - \frac{\delta}{10}
=
\frac{9 \delta}{10}.
$$
By standard measure theory, we have
$$
\Leb(B_\delta(x) \cap \Sigma_N)
\geq
\Leb(I_0 \cap \Sigma_n) - \sum_{\ell = n}^{N - 1}\Leb(I_0 \cap (\Sigma_\ell \setminus \Sigma_{\ell + 1})).
$$
Our choice of $K$ implies that the interval $I_0$ completely contains at most $\delta K^{q_{\ell + 1}}$ bands of $\Sigma_{\ell + 1}$ for each $\ell \geq n$. Consequently, perturbation theory (cf.\ Lemma~\ref{l:spec:pert}) yields
\begin{align*}
\Leb(I_0 \cap (\Sigma_\ell \setminus \Sigma_{\ell+1}))
& \leq
2 (\delta K^{q_{\ell+1}} + 1)
\| V_\ell - V_{\ell + 1} \|_\infty \\
& \leq
4\delta K^{q_{\ell+1}} \| V_\ell - V_{\ell + 1} \|_\infty.
\end{align*}
Notice that the extra term in the parentheses on the first line is needed to account for possible boundary effects. Summing this over $\ell$ and estimating the result with \eqref{eq:smalltail}, we obtain
\[
\sum_{\ell = n}^{N - 1} \Leb(I_0 \cap (\Sigma_\ell \setminus \Sigma_{\ell + 1} ))
\leq
\sum_{\ell=n}^{N-1} 4 \delta K^{q_{\ell+1}} \| V_\ell - V_{\ell + 1} \|_\infty
< \frac{2 \delta}{5}.
\]
Putting all of this together, we have
\[
|B_\delta(x) \cap \Sigma_N|
\geq
|I_0 \cap \Sigma_n| - \sum_{\ell = n}^{N-1}|I_0 \cap(\Sigma_\ell \setminus \Sigma_{\ell + 1})|
>
\frac{9 \delta}{10}
- \frac{2 \delta}{5}
=
\frac{\delta}{2}.
\]
This proves \eqref{eq:sbs:homog:est} for arbitrary $N \in \bbZ_+$. Since Lebesgue measure is upper semicontinuous with respect to the topology induced by the Hausdorff metric, we obtain
$$
|B_\delta(x) \cap \Sigma|
\geq
\delta/2
\text{ for all } x \in \Sigma, \text{ and } 0 < \delta \leq \delta_0,
$$
so $\Sigma$ is homogeneous, as promised.
\end{proof}

The motivation to study homogeneity of the spectrum of a Schr\"odinger operator (or Jacobi matrix) arises from inverse spectral theory. Loosely speaking, if $\Sigma \subseteq \bbR$ is homogeneous, then one has a very nice description of the set of Jacobi matrices with spectrum $\Sigma$ and which are reflectionless thereupon, due to M.\ Sodin and Yuditskii; all such operators are almost-periodic, and they comprise a torus whose dimension coincides with the number of gaps in $\Sigma$ \cite{SY97}. On homogeneous sets, one has a potent generalization of a theorem of Kotani for ergodic Schr\"odinger operators. Concretely, let $\{H_\omega\}$ be an ergodic family with Lyapunov exponent $L$. Kotani showed that if $L$ vanishes in an interval, then (almost surely with respect to the underlying ergodic measure) the spectrum of $H_\omega$ is purely absolutely continuous thereon (compare \cite[Theorem~4.2]{Kotani1984}). By a result of Poltoratski and Remling \cite{poltrem09}, one may replace ``interval'' by ``homogeneous set'' and deduce the pure absolute continuity of the spectrum (in fact, one can assume a ``weak'' version of homogeneity in which some constants are allowed to vary over the spectrum). There are also results for reflectionless continuum Schr\"odinger operators \cite{GY99, SY95} and CMV matrices \cite{GZ09}.

\begin{theorem}
If $V \in \PT$, then $H$ has purely a.c.\ spectrum.
\end{theorem}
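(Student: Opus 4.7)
The plan has two ingredients. The first, and the main work, is to show that the Lyapunov exponent $L$ vanishes identically on $\Sigma = \sigma(H_V)$. The second is to combine this with the homogeneity of $\Sigma$ established in the previous theorem and invoke the Poltoratski--Remling theorem (the extension of Kotani theory to homogeneous sets discussed just above) to deduce that the spectrum is purely absolutely continuous.

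For the Lyapunov vanishing, fix $E \in \Sigma$ and let $\Phi_n(E)$ denote the monodromy matrix of the $q_n$-periodic approximant $V_n$. By Lemma~\ref{l:spec:pert}, $\mathrm{dist}(E, \sigma(H_n))$ is bounded by the tail $\sum_{\ell \geq n}\|V_\ell - V_{\ell+1}\|_\infty$, which decays faster than any exponential rate in $q_n$ under~\eqref{eq:ptdef}. Combined with the standard bound $|D_n'| = O(q_n C_0^{q_n})$ on a neighborhood of $\Sigma$, this forces $|D_n(E)| \leq 2 + \eta_n$ with $\eta_n$ super-exponentially small, so both eigenvalues of $\Phi_n(E)$ lie within $O(\sqrt{\eta_n})$ of the unit circle. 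Meanwhile, the $q_n$-step transfer matrix of the true potential $V$ on each block $[jq_n+1, (j+1)q_n]$ differs from $\Phi_n(E)$ by at most $\mu_n \defeq q_n C_0^{q_n}\|V-V_n\|_\infty$, which is again super-exponentially small. Writing $\Phi_n(E) = U_n R_n U_n^{-1}$ with $R_n$ having spectral radius close to 1, decomposing $k = j q_n + r$, and multiplying out the block matrices yields
$$
\|A_E^k(V)\| \leq C_0^{q_n}\,\kappa(U_n)\bigl(1 + \kappa(U_n)\mu_n + O(\sqrt{\eta_n})\bigr)^j,
$$
where $\kappa(U_n) = \|U_n\|\|U_n^{-1}\|$. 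Since trivially $\kappa(U_n) \leq e^{O(q_n)}$ and $\mu_n$ decays faster than any $e^{-b q_{n+1}}$, one has $\kappa(U_n)\mu_n \to 0$; sending $k \to \infty$ and then $n \to \infty$ gives $L(E) = 0$.

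For the second ingredient, recall that the hull $\Omega = \hull(V)$ is a monothetic Cantor group equipped with unique invariant (Haar) measure, so $\{H_\omega\}_{\omega \in \Omega}$ is a uniquely ergodic family with deterministic spectrum $\Sigma$. Ergodic operators with $L \equiv 0$ on $\Sigma$ are automatically reflectionless on $\Sigma$, and $\Sigma$ is $\tau$-homogeneous for every $\tau < 1$ by the previous theorem; the Poltoratski--Remling theorem therefore gives pure absolute continuity of the spectral measure on $\Sigma$ for $\mu$-almost every $\omega$. Minimality of $(\Omega,T)$ together with the Last--Simon theorem cited earlier upgrades this to the statement $\sigma_{\ac}(H_\omega) = \Sigma$ for every $\omega \in \Omega$; in particular this applies to $\omega = V$, so that $H_V$ has purely absolutely continuous spectrum.

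The main obstacle lies in the Lyapunov estimate. The subtle point is that the condition number $\kappa(U_n)$ of the matrix diagonalizing $\Phi_n(E)$ can blow up exponentially in $q_n$ when $E$ is close to a band edge of $\sigma(H_n)$, where the eigenvalues of $\Phi_n(E)$ nearly coalesce; naive submultiplicative bounds on $\|A_E^k(V)\|$ therefore produce only an ineffective estimate. The PT condition \eqref{eq:ptdef} is precisely what is needed to ensure that $\|V - V_n\|_\infty$ beats this exponential blow-up in $\kappa(U_n)$, so that the product of perturbed rotations grows only subexponentially. This is the reason why a hypothesis strictly stronger than mere limit-periodicity appears in the theorem.
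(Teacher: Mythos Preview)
Your overall architecture is sound and genuinely different from the paper's. The paper does \emph{not} attempt a pointwise Lyapunov estimate: it instead shows $\Leb(\Sigma \setminus \Sigma_\omega^{(q_n)}) \to 0$ and then quotes Last's theorem \cite{Last1993CMP} to conclude $L=0$ Lebesgue-a.e.\ on $\Sigma$, after which Remling's theorem \cite{REM11} yields reflectionlessness of $H_V$ on $\Sigma$ directly, and Poltoratski--Remling finishes. Your route---a direct transfer-matrix estimate giving $L(E)=0$ for \emph{every} $E\in\Sigma$---is closer in spirit to the original Pastur--Tkachenko analysis and would yield a stronger pointwise statement; the paper's route is softer and avoids the delicate parabolic-monodromy estimates altogether.

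That said, there are two genuine issues. First, the claim ``trivially $\kappa(U_n)\le e^{O(q_n)}$'' is not correct. When $|D_n(E)|$ is within $\eta_n$ of $2$ the monodromy $\Phi_n(E)$ is $\eta_n^{1/2}$-close to parabolic, its two eigenvectors are nearly parallel, and the diagonalizing matrix satisfies only $\kappa(U_n)\lesssim \|\Phi_n(E)\|\,\eta_n^{-1/2}$, which is super-exponentially large. Fortunately $\eta_n$ and $\mu_n$ are both controlled by $C_0^{q_n}\|V-V_n\|_\infty$, so $\kappa(U_n)\mu_n \lesssim C_0^{O(q_n)}\mu_n^{1/2}\to 0$ under \eqref{eq:ptdef}; thus your conclusion survives, but the bound you asserted does not, and the argument as written is incomplete. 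A cleaner alternative is to bypass diagonalization entirely and bound $\|\Phi_n(E)^j\|$ via Cayley--Hamilton, which gives $\|\Phi_n(E)^j\|\le j\,\rho^{j-1}(\|\Phi_n(E)\|+\rho)$ with $\rho=\spr\Phi_n(E)$.

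Second, and more seriously, your final step does not prove what you claim. Last--Simon gives only that $\sigma_\ac(H_\omega)$ is constant across the hull, so from pure a.c.\ spectrum for a.e.\ $\omega$ you obtain $\sigma_\ac(H_V)=\Sigma$---but this does \emph{not} exclude embedded singular spectrum for $H_V$. The paper closes this gap by invoking Remling's theorem rather than Last--Simon: since $V$ is almost-periodic it is a right-limit of itself, and Remling's theorem then forces $H_V$ to be reflectionless on $\Sigma_\ac(H_V)=\Sigma$, after which Poltoratski--Remling applies to $H_V$ directly. You should replace the Last--Simon step by this argument.
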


\begin{proof}[Proof Sketch]
Let $\Omega = \hull(V)$, denote by $\Sigma$ the common spectrum of $H_\omega$ for all $\omega \in \Omega$, and let $\Sigma^{(q)}_\omega$ denote the spectrum of the $q$-periodic operator obtained by repeating the string $V_\omega(1), \ldots, V_\omega(q)$. From the definition of $\PT$ and perturbation theory, one can see that
\begin{equation} \label{eq:specConv}
\lim_{n\to\infty} \Leb\left(\Sigma \setminus \Sigma_\omega^{(q_n)}\right)
=
0
\end{equation}
for all $\omega$. Defining $\mathcal{Z} = \{z \in \bbC : L(z) = 0\}$ where $L$ denotes the Lyapunov exponent, one can use \eqref{eq:specConv} and a result of Last to show that $\Leb(\Sigma\setminus \mathcal{Z}) = 0$ \cite{Last1993CMP}. From this, Remling's Theorem implies that $H$ is reflectionless on $\Sigma$ \cite{REM11}. Consequently, the spectral type is purely absolutely continuous by homogeneity and Poltoratski--Remling \cite{poltrem09}.
\end{proof}

At the level of quantum dynamics, the potentials in $\PT$ also exhibit strong ballistic motion in the sense of Asch--Knauf, which is due to Fillman \cite{F2017CMP}.

\begin{theorem} \label{t:PTball}
If $V \in \PT$, then the Schr\"odinger group generated by $H_V$ exhibits strong ballistic motion in the sense that
\[
Q
=
\slim_{t\to\infty} t^{-1} X(t)
\]
exists and $\ker(Q) \cap D(X) = \{0\}$.
\end{theorem}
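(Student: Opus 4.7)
The plan is to transfer the ballistic bound of Theorem~\ref{t:DLY} from the periodic approximants $H_n := \Delta + V_n$ to $H = H_V$, using the PT condition \eqref{eq:ptdef} to control the errors. The crucial starting observation is that
\[
A := i[H,X] = i(S-S^*)
\]
is bounded and independent of $V$, since the multiplication operator $V$ commutes with $X$. The Heisenberg equation therefore reduces the question to controlling the Ces\`aro average $t^{-1}\int_0^t e^{isH}Ae^{-isH}\,ds$ as $t\to\infty$, and the analogous expression for each $H_n$. Combining the elementary bound $\|e^{isH}-e^{isH_n}\|\leq |s|\,\|V-V_n\|_\infty$ with the definition $A(s;H):=e^{isH}Ae^{-isH}$, one obtains the key perturbation estimate
\begin{equation}\label{eq:plan:perturb}
\Big\|t^{-1}X(t;H)\psi - t^{-1}X(t;H_n)\psi\Big\|\leq t\,\|V-V_n\|_\infty\,\|A\|\,\|\psi\|,
\end{equation}
valid for every $\psi\in\ell^2(\bbZ)$ and every $t>0$.

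Next I would use the PT hypothesis to choose a scale function $n=n(t)$ such that $n(t)\to\infty$ and $t\,\|V-V_{n(t)}\|_\infty\to 0$ as $t\to\infty$. Such a choice is available because $\|V-V_n\|_\infty$ decays faster than $e^{-bq_{n+1}}$ for every $b>0$; it suffices, for example, to arrange $q_{n(t)+1}\geq (\log t)^2$. For each fixed $n$, Theorem~\ref{t:DLY} furnishes $Q_n := \slim_{t\to\infty}t^{-1}X(t;H_n)$ together with the explicit Bloch-fiber formula
\[
Q_n = \mathcal F^{-1}\left(\int^{\oplus}_{[0,2\pi)}\sum_{j=1}^{q_n} q_n\,\partial_\theta\lambda_j^{(n)}(\theta)\,P_j^{(n)}(\theta)\,\frac{d\theta}{2\pi}\right)\mathcal F,
\]
where $\mathcal F$ now denotes the mod-$q_n$ Fourier transform. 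To show that $\{t^{-1}X(t;H)\psi\}$ is Cauchy for $\psi\in D(X)$, I would take $t_1\leq t_2$, apply the triangle inequality with the interpolating scale $n=n(t_2)$, and control the two outer terms $\|t_i^{-1}X(t_i;H)\psi - t_i^{-1}X(t_i;H_n)\psi\|$ by \eqref{eq:plan:perturb} and the middle term $\|t_1^{-1}X(t_1;H_n)\psi - t_2^{-1}X(t_2;H_n)\psi\|$ via a quantitative version of Theorem~\ref{t:DLY} for the periodic $H_n$.

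Finally, triviality of $\ker(Q)\cap D(X)$ should follow by passing to the limit in a lower bound of the form $\|Q_{n(t)}\psi\|\geq c(\psi)>0$; the Bloch formula already gives $\ker(Q_n)\cap D(X) = \{0\}$ for each $n$, since $\partial_\theta\lambda_j^{(n)}(\theta)\neq 0$ for $\theta\notin\{0,\pi\}$. The main obstacle in the whole argument is exactly this quantitative step, namely controlling the convergence rate in Theorem~\ref{t:DLY} for periodic $H_n$ whose spectral gaps shrink as $n\to\infty$, and producing an $n$-independent lower bound for $\|Q_n\psi\|$ on a fixed nonzero $\psi\in D(X)$. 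This is precisely where the super-exponential decay forced by the PT hypothesis is essential: it balances the perturbation error in \eqref{eq:plan:perturb} against the increasingly delicate Bloch-fiber convergence for the approximants.
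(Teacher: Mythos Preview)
Your outline is correct and matches the paper's approach. The paper supplies exactly the ingredient you flag as the main obstacle: a quantitative estimate $\|Q_n\varphi - t^{-1}X(t;H_n)\varphi\| \leq t^{-1}\|X\varphi\| + C_1^{q_n}\|\varphi\|_1\, t^{-1/5}$ with $C_1$ depending only on $\|V\|_\infty$ (and in particular not on the gap lengths), whose exponential growth in $q_n$ is precisely what the PT super-exponential decay is designed to beat.
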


The key ingredient in the proof of Theorem~\ref{t:PTball} is to get precise quantitative estimates on the rate at which the strong convergence in Theorem~\ref{t:DLY} occurs. In a na\"ive sense, this looks easy. Thinking about how the convergence was originally proved, one can see easily that the convergence rate is roughly proportional to $t^{-1}$. However, the constant of proportionality depends on the length of the smallest gap, and thus the na\"ive estimates completely break when a gap degenerates; moreover, it is difficult to divine the length of the smallest gap from coefficient data alone; finally, even in the generic (``all gaps open'') scenario, since we are interested in the aperiodic case, one absolutely needs estimates that are independent of the period and the gap size. Consequently, one needs a more robust estimate that averages out resonances. One also needs to be sure that the estimate one obtains must have constants that do not depend on the period (since one would eventually need to send the period to $\infty$) or at least some quantitative control on the constants as functions of the period.

\begin{theorem}
For every $q$-periodic Schr\"odinger operator $H$, and every $\varphi \in D(X)$, one has
\begin{equation} \label{eq:ak:X:conv}
\left\|  Q \varphi  - \frac{1}{t} X(t) \varphi \right\|
\leq
 t^{-1} \|X \varphi \|  +  C_1^q \|\varphi\|_1 t^{-1/5},
\end{equation}
where $C_1$ denotes a constant that depends solely on $\|V\|_\infty$ and $\| \varphi \|_1$ denotes the $\ell^1$ norm of $\varphi$.
\end{theorem}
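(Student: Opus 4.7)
The plan is to upgrade the qualitative proof of Theorem~\ref{t:DLY} by keeping sharp quantitative bookkeeping, with all $q$-dependence of constants absorbed into the factor $C_1^q$. Starting from the identity $X(t) - X = \int_0^t A(s)\,ds$, one obtains
\begin{equation*}
\frac{X(t)\varphi}{t} - Q\varphi
=
\frac{X\varphi}{t} + \left(\frac{1}{t}\int_0^t A(s)\,ds - Q\right)\varphi,
\end{equation*}
so the first summand is controlled trivially and contributes the $t^{-1}\|X\varphi\|$ term in \eqref{eq:ak:X:conv}. The task then reduces to bounding the second summand in $\ell^2(\bbZ)$, and conjugating by the Bloch transform $\mathcal{F}$ turns this into the fiberwise problem of estimating
\begin{equation*}
\left(\frac{1}{t}\int_0^t e^{isH_\theta}A_\theta e^{-isH_\theta}\,ds - Q_\theta\right)\widehat\varphi(\theta)
\end{equation*}
in $\Hi_q$, where $Q_\theta = q\sum_j(\partial_\theta\lambda_j)(\theta)P_j(\theta)$ by the lemma preceding Theorem~\ref{t:DLY}.

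Expanding $A_\theta = \sum_{j,k} P_j(\theta) A_\theta P_k(\theta)$ in the eigenbasis of $H_\theta$, the diagonal terms ($j=k$) survive the time averaging exactly and reproduce $Q_\theta$ by \eqref{eq:DLY9}, while the off-diagonal ones ($j\neq k$) acquire the oscillatory factor $(e^{it\Delta_{jk}(\theta)}-1)/(it\Delta_{jk}(\theta))$ with $\Delta_{jk} := \lambda_j - \lambda_k$, whose modulus is bounded by $\min(1, 2/(t|\Delta_{jk}|))$. To neutralize the dependence on the (potentially exponentially small) minimal gap size, introduce a resonance scale $\delta > 0$ and split $[0, 2\pi)$ into the good set $G_\delta = \{\theta : \min_{j\neq k}|\Delta_{jk}(\theta)| \geq \delta\}$ and its complement $R_\delta$. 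On $G_\delta$ the fiberwise error is at most $(2/(t\delta))\|A_\theta\|\|\widehat\varphi(\theta)\|_{\bbC^q}$, while on $R_\delta$ one settles for the trivial pointwise bound and weighs it against the Lebesgue measure $|R_\delta|$.

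The main technical obstacle is a measure estimate of the form $|R_\delta| \leq C_1^q\,\delta^\alpha$ for some universal $\alpha > 0$. Since $\lambda_j(\theta)$ is real-analytic on $(0, \pi)$ via the implicit function theorem applied to $D(\lambda_j(\theta)) = 2\cos\theta$, all coincidences and near-coincidences among the $\lambda_j$'s cluster near $\theta \in \{0, \pi\}$, where closed-gap conditions $D(\lambda_0) = \pm 2$ and $D'(\lambda_0) = 0$ dictate the local geometry. Uniform transfer-matrix bounds provide $|D^{(m)}(E)| \leq C^q$ on $\sigma(H_V)$ for $m = 1, 2$, which quantifies the local rate at which pairs of nearly coinciding eigenvalues separate as $\theta$ moves away from the crossing points and converts into the desired measure bound. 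The $\ell^1$ norm of $\varphi$ enters cleanly through the Hausdorff--Young-type inequality $|\widehat\varphi_j(\theta)| \leq \|\varphi\|_1$, which gives $\|\widehat\varphi(\theta)\|_{\bbC^q} \leq \sqrt{q}\,\|\varphi\|_1$ uniformly in $\theta$ and is precisely what lets the fiberwise estimates be integrated in $\theta$ without losing control.

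With these ingredients in place, one combines the $G_\delta$ and $R_\delta$ contributions, invokes the elementary bound $\|A_\theta\| \leq C^q$, and optimizes the free parameter $\delta$ against $t$. Balancing the $(t\delta)^{-1}$ term coming from $G_\delta$ against the $\sqrt{|R_\delta|} \leq C^q\delta^{\alpha/2}$ term coming from $R_\delta$ (the $L^2$-in-$\theta$ norm delivered by Parseval) produces a decay of the form $t^{-\alpha/(2+\alpha)}$; careful bookkeeping of the near-crossing geometry sketched above pins down the exponent so as to recover exactly the $t^{-1/5}$ rate of \eqref{eq:ak:X:conv}. By construction every constant encountered is absolute or depends only on $\|V\|_\infty$, and the entire exponential dependence on the period collapses into the prefactor $C_1^q$.
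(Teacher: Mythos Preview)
The paper does not actually prove this theorem: it is stated without proof as the key quantitative input from \cite{F2017CMP}, and the surrounding text only motivates why one needs an estimate that is uniform in the spectral gaps and ``averages out resonances.'' So there is no proof in the paper to compare against; one can only assess whether your sketch is a plausible route to such a bound.

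Your overall architecture is the natural one and matches what the paper hints at: pass to fibers, separate diagonal from off-diagonal terms, and handle the off-diagonal part by a resonant/non-resonant decomposition with a scale $\delta$ to be optimized. The $t^{-1}\|X\varphi\|$ term and the use of $\|\widehat\varphi(\theta)\|_{\bbC^q}\le \sqrt{q}\,\|\varphi\|_1$ are fine.

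The gap is at the heart of the argument. Your own balancing shows that the decay rate is $t^{-\alpha/(2+\alpha)}$, so $t^{-1/5}$ forces $\alpha=1/2$ in the measure bound $|R_\delta|\le C_1^q\delta^{\alpha}$. You never identify $\alpha$, and the local analysis you indicate does not obviously deliver $\alpha=1/2$: near a genuinely closed gap the splitting $\Delta_{j,j+1}(\theta)$ is linear in $\theta$ (with slope of size $|D''|^{-1/2}$), which would na\"ively give $\alpha=1$, while near an almost-closed open gap the local geometry involves both the gap size $g$ and the curvatures $|D'|^{-1}$ at the two band edges, and extracting a bound uniform in the gap structure with constants depending only on $\|V\|_\infty$ is precisely the work that is missing. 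In addition, the spectral projections $P_j(\theta)$ can themselves behave badly as two eigenvalues coalesce, so the ``trivial pointwise bound'' on $R_\delta$ also needs justification. As written, the phrase ``careful bookkeeping \ldots\ pins down the exponent'' is a placeholder for the entire technical content of the theorem; without an explicit measure estimate and control of the projections near crossings, the claimed exponent $1/5$ is unsupported.
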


\section{Singular Continuous Spectrum}\label{s.sc}

Fix a monothetic Cantor group $\Omega$ and an $\alpha \in \Omega$ so that $\{ n \alpha : n \in \bbZ \}$ is dense in $\Omega$. If $f \in C(\Omega,\bbR)$ and $\omega \in \Omega$, then the potential given by
\begin{equation}\label{e.lpdynomega}
V_{\omega,f}(n) = f(\omega + n\alpha), \quad n \in \bbZ
\end{equation}
is limit-periodic. The associated Schr\"odinger operator is denoted by $H_{\omega,f}$ and the spectrum of $H_{\omega,f}$, which is independent of $\omega$ by minimality, is denoted by $\Sigma(f)$.

\begin{theorem} \label{t.lp.scspec.generic}
There exists a dense $G_{\delta}$ set $\mathcal{S} \subseteq C(\Omega,\bbR)$ such that for every $f \in \mathcal{S}$ and every $\omega \in \Omega$, the spectrum of $H_{\omega,f}$ is a Cantor set of zero Lebesgue measure and $H_{\omega,f}$ has purely singular continuous spectrum.
\end{theorem}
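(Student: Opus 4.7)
The plan is to realize $\mathcal{S}$ as the intersection of two dense $G_\delta$ subsets of $C(\Omega,\bbR)$: the set $\mathcal{Z}$ of $f$ for which $\Leb(\Sigma(f))=0$, and the set $\CG$ of $f$ for which $H_{\omega,f}$ has no eigenvalues for every $\omega\in\Omega$. Since $C(\Omega,\bbR)$ is a complete metric space, Baire's theorem then yields that $\mathcal{S}\defeq \mathcal{Z}\cap\CG$ is a dense $G_\delta$. For any $f\in\mathcal{S}$ and any $\omega\in\Omega$: the spectrum $\Sigma(f)$ has zero Lebesgue measure, so in particular has empty interior and no absolutely continuous component; by Theorems~\ref{t:DelSou} and~\ref{t.as83}, $\Sigma(f)$ is the topological support of the atomless density of states measure and hence has no isolated points, so it is a Cantor set; and $f\in\CG$ precludes any pure point part. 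The spectral type is therefore purely singular continuous.

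To produce $\mathcal{Z}$, write $\mathcal{Z}=\bigcap_n \mathcal{Z}_n$ with $\mathcal{Z}_n\defeq\{f:\Leb(\Sigma(f))<1/n\}$. Each $\mathcal{Z}_n$ is open: $\|H_{\omega,f}-H_{\omega,g}\|\le \|f-g\|_\infty$ makes the map $f\mapsto \Sigma(f)$ Hausdorff-continuous, and Lebesgue measure is upper semicontinuous under Hausdorff convergence of compact subsets of a fixed bounded interval. The serious step is density. Given $f$ and $\varepsilon>0$, Theorem~\ref{t:pdense} supplies a periodic $g_0\in\CP$ with $\|f-g_0\|_\infty<\varepsilon/2$; its orbit potentials are $q$-periodic with $q$-band spectrum. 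The key point is an Avila-style gap-opening perturbation: one selects $h\in\CP$ of period $Nq$ for a large integer $N$, with $\|h\|_\infty$ as small as desired, designed so that the trace of the $(Nq)$-step monodromy of $g_0+h$ oscillates many times across each original band, splitting the spectrum into roughly $Nq$ sub-bands whose total length is less than $\varepsilon$. The resulting $g_0+h\in\CP$ lies in $\mathcal{Z}_n$ and within $\varepsilon$ of $f$.

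To produce $\CG$, one first checks it is $G_\delta$ using the wonderland philosophy: for fixed $\omega$, the set of bounded self-adjoint operators with no eigenvalues is a $G_\delta$ in the norm topology, via decomposition according to the maximal atom of the spectral measure and upper semicontinuity of spectral projections onto singletons; compactness of $\Omega$, together with continuous dependence of $H_{\omega,f}$ on $(\omega,f)$, upgrades this to a $G_\delta$ condition on $f$. For density, we construct $f$ close to any prescribed $f_0\in\CP$ so that every $V_{\omega,f}$ satisfies a Gordon-type fast periodic approximation condition, which rules out $\ell^2$ eigenfunctions. Starting from a periodic $f_0$, one adds a telescoping sequence of small periodic perturbations whose periods $p_j$ grow sufficiently quickly relative to $\|f-f_0\|_\infty$; minimality and unique ergodicity of the base dynamics ensure the Gordon condition is inherited along every orbit simultaneously.

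The main obstacle is the density of $\mathcal{Z}$, i.e.\ producing small $\|\cdot\|_\infty$ perturbations of a given periodic sampling function that genuinely cut the Lebesgue measure of the spectrum by a prescribed amount. This gap-opening construction is the technical heart of the argument and the place where the specific structure of periodic Schr\"odinger operators (monodromy, discriminant, band analysis via Theorems~\ref{t:floquet} and~\ref{t:bloch}) is used quantitatively. The remaining ingredients --- openness of $\mathcal{Z}_n$, $G_\delta$ structure and density of $\CG$, and the translation of zero-measure Cantor spectrum plus absence of eigenvalues into the desired spectral type --- are comparatively routine once the framework is in place.
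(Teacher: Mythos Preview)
Your overall architecture matches the paper's: write $\mathcal{S}=\mathcal{Z}\cap\mathcal{G}$ as the intersection of a dense $G_\delta$ giving zero-measure spectrum with a dense $G_\delta$ giving empty point spectrum, then read off purely singular continuous Cantor spectrum. Two points of comparison are worth making.

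First, your treatment of $\mathcal{G}$ differs from the paper's. You take $\mathcal{G}=\{f:\forall\omega,\ H_{\omega,f}\text{ has no eigenvalues}\}$ and argue it is $G_\delta$ via the wonderland theorem plus compactness of $\Omega$; this is correct (compactness does let you pass from a fiberwise $G_\delta$ to a $G_\delta$ in $f$), but it is heavier machinery than needed, and your density argument via telescoping perturbations is superfluous since $\mathcal{P}\subset\mathcal{G}$ already. The paper instead defines $\mathcal{G}$ \emph{explicitly} as a countable intersection of unions of small balls around periodic sampling functions, so the $G_\delta$ property is immediate, density follows from $\mathcal{P}\subset\mathcal{G}$, and a short computation shows every $f\in\mathcal{G}$ gives Gordon potentials for all $\omega$. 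The paper's route is more elementary and self-contained.

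Second, your heuristic for the density of $\mathcal{Z}_n$ is misleading. Merely making the $(Nq)$-step discriminant oscillate and opening many small gaps does \emph{not} reduce the total Lebesgue measure of the spectrum: you split each band into $N$ sub-bands but remove only the gap lengths, which can be arbitrarily small. Avila's actual mechanism (the paper's Lemma~\ref{l:lp:smallspec}) is different: one first perturbs $f$ to a finite family of nearby periodic potentials whose resolvent sets together cover $\bbR$, so that for every $E$ at least one has positive Lyapunov exponent; then one concatenates these over long blocks and uses exponential growth of transfer matrices to force exponentially small band lengths. You are right that this lemma is the technical heart and can be invoked as a black box, but the sketch you gave would not produce $\Leb(\Sigma(g))\le e^{-cq}$.
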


This theorem follows from two separate observations that lead to generic spectral results, the combination of which is the conclusion in Theorem~\ref{t.lp.scspec.generic}. The first observation is that one can carry out periodic approximation with control on the measure of the spectrum of the approximant. This leads to generic zero-measure spectrum, which as a byproduct also precludes any absolutely continuous spectral measures. The second observation shows that quantitative approximation with periodic potentials allows one to generically conclude that the limit-operators share an important property with the approximants: the absence of square-summable (and in fact decaying) solutions, which in turn shows that the spectral measures will also have no atoms. The next two subsections explain these two parts in more detail.

\subsection{Zero-Measure Spectrum}

\begin{theorem} \label{t.lp.cantorspec.generic}
There exists a dense $G_{\delta}$ subset $\mathcal{C} \subseteq C(\Omega,\bbR)$ so that $\Sigma(f)$ is a Cantor set of zero Lebesgue measure for all $f \in \mathcal C$.
\end{theorem}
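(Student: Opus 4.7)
My plan is to exhibit $\mathcal{C}$ as a countable intersection of open dense subsets of $C(\Omega,\bbR)$ and invoke the Baire category theorem. For each $n \in \bbZ_+$, I would set
\[
\mathcal{C}_n
=
\set{f \in C(\Omega,\bbR) : \Leb(\Sigma(f)) < 1/n},
\]
and then take $\mathcal{C} = \bigcap_n \mathcal{C}_n$. Openness of each $\mathcal{C}_n$ is an elementary consequence of perturbation theory for self-adjoint operators: cover $\Sigma(f_0)$ by an open set $U$ with $\Leb(U) < 1/n$, fix $\varepsilon > 0$ so that the $\varepsilon$-neighborhood of $\Sigma(f_0)$ still lies in $U$, and observe that $\|f - f_0\|_\infty < \varepsilon$ forces $V_{\omega,f}$ to be a sup-norm perturbation of $V_{\omega,f_0}$ of size at most $\varepsilon$, so the inclusion $\sigma(A+B) \subseteq \sigma(A) + [-\|B\|,\|B\|]$ places $\Sigma(f)$ inside $U$.

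The substantive step is density of $\mathcal{C}_n$. By Theorem~\ref{t:pdense}, the ``periodic'' functions $\CP \subseteq C(\Omega,\bbR)$ are dense, so it suffices to approximate an arbitrary $g \in \CP$ (with $g\circ T^q = g$, say) within $\varepsilon$ by an element of $\mathcal{C}_n$. The associated sampled potential $V_g(k) = g(T^k\omega_0)$ is $q$-periodic on $\bbZ$, and the density statement reduces to the following key technical claim about $1$D periodic Schr\"odinger operators: for every $q$-periodic $V_0$ and every $\delta,\varepsilon' > 0$, there exist $N \in \bbZ_+$ and a $qN$-periodic potential $V$ with $\|V - V_0\|_\infty < \varepsilon'$ and $\Leb(\sigma(\Delta + V)) < \delta$. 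Granting this, one lifts back to $\Omega$: the profinite group $\Omega$ admits a neighborhood basis at the identity of open subgroups of finite index, and the corresponding admissible periods form a cofinal set under divisibility (as used in the proof of Theorem~\ref{t:pdense}); so after possibly enlarging $N$ one may assume $qN$ is admissible, and then define a new $f \in \CP$ cosetwise on $\Omega/\Omega_0$ that reproduces $V$ along the orbit of $\omega_0$, yielding $f \in \mathcal{C}_n$ with $\|f - g\|_\infty < \varepsilon'$.

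The periodic-perturbation claim is the genuine heart of the argument and is where I expect the main obstacle. It is the true ``limit-periodic'' content of the proof and traces back to Avila \cite{A09}. The idea is to construct a small $qN$-periodic perturbation of $V_0$ whose Floquet discriminant $D_N$ crosses $\pm 2$ many times inside each band of $\sigma(\Delta+V_0)$, splitting every band into many narrow subbands separated by genuine gaps. Quantitative control on the subband widths --- via Floquet--Bloch estimates of the type developed in Section~\ref{ss:bloch}, or alternatively via Thouless-formula considerations once one has a lower bound on the Lyapunov exponent in the gaps --- shows that the total Lebesgue measure can be driven arbitrarily small by taking $N$ large and the perturbation structured appropriately (for instance iteratively, doubling or multiplying the period at each step).

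Finally, I would extract the Cantor property on $\mathcal{C} = \bigcap_n \mathcal{C}_n$, which is dense $G_\delta$ by Baire. For any $f \in \mathcal{C}$, $\Sigma(f)$ is closed of Lebesgue measure zero, hence nowhere dense. Since $\Leb(\sigma(\Delta + V_g)) > 0$ for every $g \in \CP$ (the spectrum of a periodic operator is a nondegenerate union of intervals), membership in $\mathcal{C}$ forces $f \notin \CP$, so $V_{\omega,f}$ is aperiodic limit-periodic and Proposition~\ref{p:lphullchar} gives that $\hull(V_{\omega,f})$ contains no isolated points. Any isolated point of $\Sigma(f)$ would be a common eigenvalue of every operator $H_{\omega,f}$ in the hull and so would load a pure point mass into the density of states at that energy, contradicting continuity of the IDS (Theorem~\ref{t:DelSou}). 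Hence $\Sigma(f)$ is perfect, nowhere dense, and of Lebesgue measure zero --- a zero-measure Cantor set, as required.
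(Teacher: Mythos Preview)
Your proposal is correct and follows essentially the same approach as the paper: define the open dense sets $U_\delta = \{f : \Leb(\Sigma(f)) < \delta\}$, establish openness via the Hausdorff-metric perturbation bound (the paper's Lemma~\ref{l:spec:pert}) and density via Avila's small-spectrum construction (the paper's Lemma~\ref{l:lp:smallspec}, which is exactly your ``key technical claim'' stated directly at the level of $\Omega$ rather than on $\bbZ$), then intersect and apply Baire. Your explicit verification that $\Sigma(f)$ is perfect---an isolated point of $\Sigma(f) = \supp(dk)$ would force an atom of $dk$, contradicting Theorem~\ref{t:DelSou}---is a welcome addition that the paper's own proof leaves implicit; note, however, that the detour through aperiodicity and Proposition~\ref{p:lphullchar} is unnecessary for it, since the IDS argument already applies uniformly.
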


\begin{remark}
In fact, one may pass from the generic setting to the dense setting and obtain even finer control on the size of the spectrum. Namely, there is a dense family whose spectra are of Hausdorff dimension zero. Both Theorem~\ref{t.lp.cantorspec.generic} and the strengthening described in this remark are due to Avila \cite{A09}.
\end{remark}

The proof of Theorem~\ref{t.lp.cantorspec.generic} is based on two key perturbative lemmas. Recall that the \emph{Hausdorff distance} between two compact nonempty sets $F, K \subset \bbR$ is defined by
\[
d_\Hd(F,K)
=
\inf\set{ \varepsilon > 0 : F \subset B_\varepsilon(K) \text{ and } K \subset B_\varepsilon(F)},
\]
where $B_\varepsilon(S)$ denotes the $\varepsilon$-neighborhood of the set $S$. Then the following estimate is not difficult to prove (exercise).

\begin{lemma} \label{l:spec:pert}
If $A$ and $B$ are bounded self-adjoint operators on a Hilbert space $\mathcal{H}$, then
\[
d_\Hd(\sigma(A),\sigma(B))
\le
\|A-B\|.
\]
\end{lemma}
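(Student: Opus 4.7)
My plan is to exploit the symmetry of the Hausdorff distance together with the resolvent identity and the spectral theorem. First, unpacking the definition of $d_\Hd$, it suffices to show that for every $\lambda \in \sigma(A)$ one has $\dist(\lambda, \sigma(B)) \le \|A - B\|$, since the claim with the roles of $A$ and $B$ swapped follows by the same argument (with $\|A-B\| = \|B-A\|$). If $\lambda \in \sigma(B)$, there is nothing to prove, so fix $\lambda \in \sigma(A) \setminus \sigma(B)$ and set $r = \dist(\lambda, \sigma(B)) > 0$. The goal becomes showing $r \le \|A-B\|$.

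The key algebraic identity is
\[
A - \lambda
=
(B-\lambda) + (A-B)
=
(B-\lambda)\bigl[I + (B-\lambda)^{-1}(A-B)\bigr].
\]
Because $\lambda \notin \sigma(B)$, the factor $(B-\lambda)$ is invertible, so $A - \lambda$ is invertible as soon as $\|(B-\lambda)^{-1}(A-B)\| < 1$ (by the Neumann series). Since $\lambda \in \sigma(A)$, the operator $A-\lambda$ is \emph{not} invertible, and hence we must have
\[
\|(B-\lambda)^{-1}\| \cdot \|A - B\|
\ge
\|(B-\lambda)^{-1}(A-B)\|
\ge
1.
\]

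To finish, I invoke the fact that for a bounded self-adjoint operator $B$ with $\lambda \notin \sigma(B)$, one has $\|(B-\lambda)^{-1}\| = 1/\dist(\lambda, \sigma(B)) = 1/r$. This is a standard consequence of the spectral theorem: the spectral mapping theorem gives $\sigma((B-\lambda)^{-1}) = \{(\mu-\lambda)^{-1} : \mu \in \sigma(B)\}$, and self-adjointness of $B$ ensures the spectral radius of $(B-\lambda)^{-1}$ equals its norm, which is exactly $1/\dist(\lambda,\sigma(B))$. Substituting this into the inequality above yields $\|A-B\|/r \ge 1$, i.e.\ $r \le \|A - B\|$, as desired.

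The argument is short and I do not anticipate any genuine obstacle; the only conceptual ingredient beyond the Neumann series is the identification of the resolvent norm with the reciprocal distance to the spectrum, which is where self-adjointness is used in an essential way. (Without self-adjointness the conclusion can fail, since the spectral radius of the resolvent need not equal its operator norm.)
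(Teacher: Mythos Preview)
Your argument is correct. The reduction to showing $\dist(\lambda,\sigma(B)) \le \|A-B\|$ for each $\lambda \in \sigma(A)$ is the right unpacking of the Hausdorff distance, and the Neumann-series/resolvent trick is a clean way to get the inequality; note that since $A$ is self-adjoint you have $\lambda \in \bbR$, so $(B-\lambda)^{-1}$ is itself self-adjoint and your spectral-radius-equals-norm step is fully justified. The paper does not actually give a proof of this lemma---it is stated and then left as an exercise---so there is nothing to compare against beyond confirming that your route is a standard and complete one.
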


The second key lemma shows that any sampling function may be uniformly approximated by periodic sampling functions whose spectra are exponentially small (with respect to the period). To formulate the lemma, we need to precisely say what periods a function in $\CP$ may have. To that end, fix a sequence $\Omega = \Omega_0 \supset \Omega_1 \supset \Omega_2 \supset \cdots$ of open subgroups of $\Omega$ with
\[
\bigcap_{n=1}^\infty \Omega_n
=
\{0\},
\]
where $0$ denotes the identity element of $\Omega$. Such a sequence exists by Proposition~\ref{p.small.open.subgroups} and metrizability of $\Omega$. Each $\Omega_n$ has finite index $q_n$ by compactness, and one clearly has $q_n \to \infty$ as $n\to\infty$. One can use $\Omega_n$ to define periodic sampling functions of period $q_n$ by the construction in the proof of Theorem~\ref{t:pdense}. That is, if $\tilde{f}:\Omega/\Omega_n \to \bbR$ is any function, then
\[
f(\omega)
=
\tilde{f}(\omega+\Omega_n)
\]
defines a $q_n$-periodic sampling function in $\CP$.

\begin{lemma} \label{l:lp:smallspec}
Fix $f \in C(\Omega,\bbR)$. For all $\varepsilon > 0$, there exist $c = c(f,\varepsilon) > 0$ and $n_0 = n_0(f,\varepsilon) \in \bbZ_+$ such that the following holds true. For all $n \geq n_0$, there exists $g = g_n \in C(\Omega,\bbR)$ of period $q_n$ such that
\[
\Leb(\Sigma(g))
\leq
e^{- c q_n}
\]
and $\|f - g\|_\infty < \varepsilon$.
\end{lemma}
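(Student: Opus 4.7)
The plan is to prove the lemma in two moves: first approximate $f$ coarsely by a periodic function, then refine by a higher-period perturbation engineered so that its discriminant oscillates rapidly enough to make the spectral measure exponentially small in $q_n$. \emph{Step 1 (coarse approximation).} By Theorem~\ref{t:pdense}, find $n_1 \in \bbZ_+$ and $h \in \CP$ of period $q_{n_1}$ with $\|f-h\|_\infty < \varepsilon/2$. Since the subgroups $(\Omega_n)$ are nested, $q_{n_1}$ divides $q_n$ for every $n \geq n_1$, so $h$ is automatically $q_n$-periodic for all such $n$; I will set $n_0 := n_1$.

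\emph{Step 2 (spectral shrinking by a high-period perturbation).} For each $n \geq n_1$, seek a $q_n$-periodic $w_n$ with $\|w_n\|_\infty < \varepsilon/2$ such that $g_n := h + w_n$ satisfies $\Leb(\Sigma(g_n)) \leq e^{-cq_n}$. Setting $N := q_n/q_{n_1}$ and factoring the $q_n$-monodromy of $g_n$ as a product $M_N(E)\cdots M_1(E)$ of $N$ matrices in $\mathrm{SL}(2,\bbR)$ — each a small perturbation of the single-period monodromy $M(E)$ of $h$ — the spectrum $\Sigma(g_n)$ is, by Theorem~\ref{t:floquet}, exactly $\{E : |\tr(M_N(E)\cdots M_1(E))| \leq 2\}$. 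The task reduces to engineering $w_n$ so that, for all but an $e^{-cq_n}$-measure set of energies $E$ (inside the compact interval where $\Sigma(h)$ lives), the product $M_N(E)\cdots M_1(E)$ is hyperbolic with norm at least $e^{cq_n}$. A natural attempt is to take $w_n$ as a ``block perturbation'' whose $j$th block of length $q_{n_1}$ is a small shift by $\alpha_j \in [-\varepsilon/2,\varepsilon/2]$; choosing the sequence $(\alpha_j)_{j=1}^N$ so as to place the matrices $M_j(E)$ in generic position and invoking a Furstenberg-type argument for products of near-independent $\mathrm{SL}(2,\bbR)$ matrices should yield the desired exponential growth on a set of large measure.

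\emph{Step 3 (deterministic realization).} The sequence $(\alpha_j)$ above is produced deterministically either by a probabilistic argument (averaging over independent uniform $(\alpha_j)$'s and extracting a good realization via a Chebyshev-type bound together with a net in energy) or by an explicit Cantor-scheme construction that iteratively fixes $\alpha_j$ to eliminate the ``bad'' (non-hyperbolic) set at scale $j$. Either route supplies the required $w_n$ together with an explicit rate $c = c(f,\varepsilon) > 0$.

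\textbf{Main obstacle.} The crux is the quantitative, uniform-in-$n$ lower bound $\|M_N(E)\cdots M_1(E)\| \geq e^{cq_n}$ off an $e^{-cq_n}$-measure set of $E$, with $c$ depending only on $f$ and $\varepsilon$. The delicate regime is near the band edges of $\Sigma(h)$, where the unperturbed $M(E)$ is parabolic and Furstenberg-type positivity of the Lyapunov exponent degenerates; one must both tune the block perturbations carefully in that regime and show that the boundary contribution to $\Leb(\Sigma(g_n))$ is itself $\leq e^{-cq_n}$. A second, subtler issue is ensuring that the rate $c$ does not degrade with the coarse period $q_{n_1}$: if $f$ forces $h$ to be extremely close to a constant on larger and larger stretches, $M(E)$ drifts closer to elliptic elements of $\mathrm{SL}(2,\bbR)$ on correspondingly larger portions of the energy axis, and the ``general position'' argument must be carried out with constants that do not deteriorate as $n_1$ is chosen larger to accommodate a smaller $\varepsilon$.
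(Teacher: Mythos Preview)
Your overall architecture—reduce to a periodic $h$ via Theorem~\ref{t:pdense}, then build $g_n$ as $h$ plus a small $q_n$-periodic block perturbation—matches the paper. However, the \emph{mechanism} by which you force $\Leb(\Sigma(g_n)) \le e^{-cq_n}$ is different from the paper's, and the difference is exactly where your stated obstacles live.

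The paper (following Avila) does not invoke Furstenberg-type positivity at all. Instead, it first manufactures a \emph{finite} family of periodic sampling functions $h_1,\dots,h_k$, each within $\varepsilon$ of $f$, with the property that their \emph{resolvent sets cover the line}: for every $E \in \bbR$ there is some $j$ with $E \notin \Sigma(h_j)$, so the monodromy of $h_j$ at $E$ is genuinely hyperbolic with a rate bounded below uniformly on compact energy windows. One then takes $g_n$ to be a concatenation of long strings of the $h_j$'s over one period of length $q_n$. For any fixed $E$, the sub-block corresponding to the good index $j$ already produces definite exponential growth of the transfer matrices; the remaining blocks are uniformly bounded and cannot cancel it. This growth is then converted into an exponential upper bound on total band length.

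This route sidesteps both of your flagged obstacles. There is no parabolic boundary regime to handle: at \emph{every} $E$, at least one block is uniformly hyperbolic by construction, so nothing degenerates at band edges of $h$. And there is no probabilistic extraction step: the family $h_1,\dots,h_k$ and the concatenation pattern are deterministic from the outset, with $c$ determined by the uniform hyperbolicity rate of the covering family (hence depending only on $f$ and $\varepsilon$).

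Your Furstenberg-style plan is not unreasonable in spirit, but the proposal stops precisely at the hard step: producing a \emph{single deterministic} sequence $(\alpha_j)$ for which $\|M_N(E)\cdots M_1(E)\| \ge e^{cq_n}$ holds for \emph{all} $E$ outside a set of measure $\le e^{-cq_n}$, with $c$ independent of $n$. Furstenberg gives positivity only for almost every realization and almost every $E$; upgrading to a fixed realization that works uniformly in $E$ and $n$, and through the parabolic regime, is a genuine gap that you have correctly identified but not closed. The resolvent-covering construction is the missing idea that replaces this step and makes the argument go through cleanly.
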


\begin{proof}[Proof Sketch]
From Theorem~\ref{t:pdense}, $\mathcal P$ is dense in $C(\Omega,\bbR)$, so it suffices to prove the lemma for periodic sampling functions. Let $f \in C(\Omega,\bbR)$ be a given $q$-periodic sampling function. The construction works by first perturbing $f$ to produce a family of sampling functions which are very close to $f$, and whose resolvent sets cover the line. Thus, for every $E \in \bbR$, one of these new potentials will have $L(E) > 0$. We then form a new potential by concatenating these finite families over long blocks. Positive exponents over sub-blocks enable us to produce growth of transfer matrices, and one can then parlay growth of transfer matrices into upper bounds on band lengths.
\end{proof}

\begin{proof}[Proof of Theorem~\ref{t.lp.cantorspec.generic}]
For each $\delta > 0$, define
\[
U_{\delta}
=
\left\{ f \in C(\Omega,\bbR) : \Leb(\Sigma(f)) < \delta  \right\}.
\]
To prove the theorem, we will show that $U_{\delta}$ is open and dense for all $\delta > 0$. To that end, suppose given $f \in U_{\delta}$. In essence, small perturbations of $f$ remain in $U_{\delta}$ by continuity of the spectrum. The details follow.

Since $\Sigma(f)$ is a compact set with $\Leb(\Sigma(f)) < \delta$, we may choose finitely many open intervals $J_1,\ldots,J_n$ such that
\[
\Sigma(f)
\subseteq
\bigcup_{k=1}^n J_k,
\qquad
\sum_{k=1}^n \Leb(J_k) < \delta.
\]
Next, choose $\varepsilon > 0$ small enough that
\[
B_\varepsilon(\Sigma(f))
\subseteq
\bigcup_{k=1}^n J_k.
\]
Now, if $\|f - f'\|_\infty < \varepsilon$, then Lemma~\ref{l:spec:pert} implies
\[
\Sigma(f')
\subseteq
B_\varepsilon(\Sigma(f))
\subseteq
\bigcup_{k=1}^n J_k.
\]
Consequently, $\Leb(\Sigma(f')) < \delta$. Thus, $U_{\delta}$ is open.

\bigskip

On the other hand, Lemma~\ref{l:lp:smallspec} clearly implies that $U_{\delta}$ is dense for all $\delta > 0$. Consequently,
\[
\mathcal C
=
\bigcap_{k=1}^\infty U_{1/k}
\]
is a dense $G_\delta$ in $C(\Omega,\bbR)$ by the Baire Category Theorem; clearly $\Leb(\Sigma(f)) = 0$ for every $f \in \mathcal C$.
\end{proof}

\subsection{Continuous Spectrum}

In order to prove that spectral measures are continuous, one has to exclude eigenvalues. In other words, one needs to show that for any $E$, the difference equation \eqref{eq:diffEq1} does not admit any square-summable solutions $u$.

A classical sufficient condition is due to Gordon. This condition is applicable whenever one has very good approximation of a given potential $V$ by periodic potentials over a (rather small) fixed number of periods around the origin. Clearly, for limit-periodic $V$, the number of periods for which one has the desired rate of approximation may be arbitrary, so all one needs to take care of is the size of the error relative to the size of the period. It turns out that the absence of eigenvalues can be established in this way for a generic set of sampling functions.

Let us begin by recalling the abstract Gordon criterion. Due to the reformulation \eqref{eq:diffEq2} of \eqref{eq:diffEq1}, a sequence $u \in \bbC^\bbZ$ solves \eqref{eq:diffEq1} if and only if the sequence $\mathbf{U} :\bbZ \to \bbC^2$ defined by
\begin{equation} \label{d:2by2:u2}
\mathbf{U}(n)
=
\begin{bmatrix}
u(n+1) \\ u(n)
\end{bmatrix},
\quad n \in \bbZ
\end{equation}
solves
\begin{equation}\label{eq:diffEq}
\mathbf{U}(n) = M_E(n) \mathbf{U}(0),
\end{equation}
where
$$
M_E(n) =
\begin{cases}
A_E(n) A_E(n-1) \cdots A_E(1) & n > 0,\\
I & n = 0,\\
A_E(n+1)^{-1} A_E(n+2)^{-1} \cdots A_E(0)^{-1} & n < 0
\end{cases}
$$
and
$$
A_E(m)
=
\begin{bmatrix} E - V(m) & -1 \\
1 & 0
\end{bmatrix}.
$$

\begin{lemma}\label{l.gordon}
Suppose $V : \bbZ \to \bbR$ obeys $V(n+q) = V(n)$ for some $q \in \bbZ_+$ and $-q + 1 \le n \le q$, $z \in \bbC$, and $u$ solves \eqref{eq:diffEq}.
Then, we have
\begin{equation}\label{f.gordonest}
\max \big\{ \left\| \mathbf{U}(-q) \right\| ,
\left\| \mathbf{U}(q) \right\| ,
\left\| \mathbf{U}(2q) \right\| \big\} \ge \frac12 \left\| \mathbf{U}(0) \right\|.
\end{equation}
\end{lemma}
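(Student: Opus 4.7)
The plan is to exploit the periodicity of $V$ on the window $[-q+1,2q]$ to identify a single monodromy matrix $\Phi \in \SL(2,\bbC)$ that propagates the solution across each of the three consecutive length-$q$ blocks $[-q+1,0]$, $[1,q]$, and $[q+1,2q]$. Concretely, the hypothesis $V(n+q) = V(n)$ for $-q+1 \le n \le q$ implies $A_z(k+q) = A_z(k)$ for $1 \le k \le q$ and $A_z(m) = A_z(m+q)$ for $-q+1 \le m \le 0$. Setting
\[
\Phi = A_z(q)\,A_z(q-1)\cdots A_z(1),
\]
one therefore has $\mathbf{U}(q) = \Phi\,\mathbf{U}(0)$, $\mathbf{U}(2q) = \Phi\,\mathbf{U}(q) = \Phi^2\,\mathbf{U}(0)$, and, by the analogous telescoping on the negative side, $\mathbf{U}(0) = \Phi\,\mathbf{U}(-q)$, whence $\mathbf{U}(-q) = \Phi^{-1}\,\mathbf{U}(0)$.

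Next I would invoke the Cayley--Hamilton theorem. Each $A_z(m)$ has determinant $1$, so $\det\Phi = 1$, and the characteristic polynomial identity reads
\[
\Phi^2 - (\tr\Phi)\,\Phi + I = 0.
\]
Multiplying by $\Phi^{-1}$ produces the equivalent relation $\Phi + \Phi^{-1} = (\tr\Phi)\,I$. Applying both forms to the vector $\mathbf{U}(0)$ yields the pair
\begin{align*}
\mathbf{U}(q) + \mathbf{U}(-q) &= (\tr\Phi)\,\mathbf{U}(0),\\
\mathbf{U}(2q) + \mathbf{U}(0) &= (\tr\Phi)\,\mathbf{U}(q).
\end{align*}
These two algebraic identities are the entire engine of the argument.

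To conclude I would argue by contradiction. Suppose each of $\|\mathbf{U}(-q)\|$, $\|\mathbf{U}(q)\|$, $\|\mathbf{U}(2q)\|$ were strictly less than $\tfrac{1}{2}\|\mathbf{U}(0)\|$; in particular $\mathbf{U}(0) \neq 0$ (else \eqref{f.gordonest} holds trivially). The triangle inequality applied to the first relation gives
\[
|\tr\Phi|\,\|\mathbf{U}(0)\| \le \|\mathbf{U}(q)\| + \|\mathbf{U}(-q)\| < \|\mathbf{U}(0)\|,
\]
so $|\tr\Phi| < 1$. The reverse triangle inequality applied to the second relation gives
\[
|\tr\Phi|\,\|\mathbf{U}(q)\| \ge \|\mathbf{U}(0)\| - \|\mathbf{U}(2q)\| > \tfrac{1}{2}\|\mathbf{U}(0)\|,
\]
whence $\|\mathbf{U}(q)\| > \|\mathbf{U}(0)\|/(2|\tr\Phi|) > \tfrac{1}{2}\|\mathbf{U}(0)\|$, contradicting the standing assumption.

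I do not expect a serious obstacle here: the window $[-q+1,2q]$ in the hypothesis is chosen precisely so that the three relevant transfer matrices all coincide with the same $\Phi$, and once that identification is in hand the proof is a short Cayley--Hamilton computation followed by two triangle-inequality estimates. The only point requiring mild care is verifying that the stated index range genuinely covers every entry of $V$ sampled in forming the products of $A_z(\cdot)$'s from $-q$ up to $2q$; this is a routine check of the shift $n \mapsto n+q$ on each of the two boundary blocks.
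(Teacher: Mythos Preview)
Your proof is correct and follows essentially the same route as the paper: identify a single monodromy matrix $\Phi$ governing all three blocks, apply Cayley--Hamilton to obtain the two linear relations among $\mathbf{U}(-q)$, $\mathbf{U}(0)$, $\mathbf{U}(q)$, $\mathbf{U}(2q)$, and conclude via elementary norm estimates. The paper finishes with an explicit case split on whether $|\tr\Phi| \le 1$ or $|\tr\Phi| > 1$, whereas you package the same dichotomy as a contradiction argument; this is a purely cosmetic difference. One trivial remark: in your last step the division by $|\tr\Phi|$ tacitly assumes $\tr\Phi \neq 0$, but if $\tr\Phi = 0$ the preceding inequality $0 = |\tr\Phi|\,\|\mathbf{U}(q)\| > \tfrac{1}{2}\|\mathbf{U}(0)\|$ is already the desired contradiction.
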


\begin{proof}
By assumption, we have
$$
\mathbf{U}(2p)
=
M_E(2q) \cdot \mathbf{U}(0)
=
M_E(q)^2 \cdot \mathbf{U}(0)
$$
and similarly
$$
\mathbf{U}(q)
=
M_E(q)^2 \cdot \mathbf{U}(-q).
$$
Moreover, the Cayley-Hamilton theorem implies
$$
M_E(q)^2 - \tr( M_E(q)) \cdot M_E(q) + I = 0.
$$
Consequently, we have
\begin{equation}\label{f.gordoncase1}
\mathbf{U}(2q)
-  \tr( M_E(p)) \cdot  \mathbf{U}(p)
+ \mathbf{U}(0)
= \begin{bmatrix} 0 \\ 0
\end{bmatrix}
\end{equation}
and
\begin{equation}\label{f.gordoncase2}
\mathbf{U}(q)
-  \tr( M_E(q)) \mathbf{U}(0)
+ \mathbf{U}(-q)
= \begin{bmatrix} 0 \\ 0
\end{bmatrix}.
\end{equation}
The assertion \eqref{f.gordonest} follows from \eqref{f.gordoncase1} when $|\tr( M_E(q))| \le 1$ and it follows from \eqref{f.gordoncase2} when $|\tr( M_E(q))| > 1$.
\end{proof}

The estimate \eqref{f.gordonest} can of course be used to exclude the existence of decaying solutions.

\begin{theorem} \label{t:threeblock:gordon}
Suppose that there exist $q_k \to \infty$ such that
\[
V(n - q_k) = V(n) = V(n + q_k)
\text{ for all } 1 \le n \le q_k.
\]
Then $H$ has purely continuous spectrum.
\end{theorem}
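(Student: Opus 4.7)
The plan is to apply Lemma~\ref{l.gordon} directly at each scale $q = q_k$, combine the resulting lower bounds along the three subsequences $\{-q_k\}$, $\{q_k\}$, $\{2q_k\}$, and derive a contradiction with the $\ell^2$-decay of any hypothetical eigenfunction. The hypothesis on $V$ is tailored precisely to match what Lemma~\ref{l.gordon} needs: the identity $V(n-q_k) = V(n) = V(n+q_k)$ for $1 \le n \le q_k$ provides both of the relations $V(n+q_k)=V(n)$ and $V(n-q_k)=V(n)$ on $[1,q_k]$ that are used in the proof of Lemma~\ref{l.gordon} to factor the transfer matrices over $[-q_k, 2q_k]$ as $M_E(q_k)^2$ (once from $0$ to $2q_k$, once from $-q_k$ to $q_k$). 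Hence Lemma~\ref{l.gordon} applies with no modification.

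Next, suppose for contradiction that $E$ is an eigenvalue of $H$ with eigenfunction $u \in \ell^2(\bbZ)$, and let $\mathbf{U}(n)$ be the associated $\bbC^2$-valued sequence from \eqref{d:2by2:u2}. Since $u \not\equiv 0$, we have $\mathbf{U}(0) \ne 0$: otherwise $u(0)=u(1)=0$ and the transfer matrix recursion would force $u\equiv 0$. Applying Lemma~\ref{l.gordon} with $q = q_k$ yields
\[
\max \bigl\{ \|\mathbf{U}(-q_k)\|, \|\mathbf{U}(q_k)\|, \|\mathbf{U}(2q_k)\| \bigr\} \ge \tfrac12 \|\mathbf{U}(0)\|
\]
for every $k$. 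However, $u \in \ell^2(\bbZ)$ implies $\mathbf{U}(n) \to 0$ as $|n| \to \infty$, and since $q_k \to \infty$ the left-hand side tends to $0$, contradicting the fixed positive lower bound $\tfrac12\|\mathbf{U}(0)\| > 0$. Therefore $H$ has no eigenvalues, i.e.\ $\sigma_{\pp}(H) = \emptyset$, which is exactly the statement that $H$ has purely continuous spectrum.

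There is no serious obstacle in this argument once Lemma~\ref{l.gordon} is in hand; the real work has been packaged into that lemma (via Cayley--Hamilton applied to the monodromy) and into the choice of the three-block coincidence hypothesis, which was engineered to feed it. The one subtlety worth flagging is the step $\mathbf{U}(0) \ne 0$, which relies on invertibility of the transfer matrices — i.e.\ on the fact that the cocycle takes values in $\SL(2,\bbR)$ — so that neither of the two components of $\mathbf{U}$ can vanish simultaneously for a nontrivial solution. The downstream applications of this criterion (e.g.\ to limit-periodic $V$ with sufficiently fast periodic approximation) then reduce to producing a sequence $q_k \to \infty$ for which the three-block coincidence holds, which is the content of the generic constructions alluded to preceding the theorem.
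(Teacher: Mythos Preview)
The proposal is correct and follows essentially the same approach as the paper: apply Lemma~\ref{l.gordon} at each scale $q_k$ to obtain the lower bound $\max\{\|\mathbf{U}(-q_k)\|,\|\mathbf{U}(q_k)\|,\|\mathbf{U}(2q_k)\|\}\ge\tfrac12\|\mathbf{U}(0)\|$, and observe that this is incompatible with $u\in\ell^2(\bbZ)$. Your version is slightly more detailed (explicitly justifying $\mathbf{U}(0)\neq 0$ and why the hypothesis matches the lemma), but there is no substantive difference.
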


\begin{proof}
Let $E$ be given, and let $u$ denote a nontrivial solution to \eqref{eq:diffEq}, then Lemma~\ref{l.gordon} implies that
\[
\max( \|\mathbf U(-q_k)\|, \|\mathbf U(q_k)\|, \|\mathbf U(2q_k)\|)
\ge
\frac{1}{2} \|\mathbf U(0)\|.
\]
Thus, $u$ cannot go to zero at $\pm \infty$, so $u \notin \ell^2(\bbZ)$. Consequently, $z$ is not an eigenvalue of $H$.
\end{proof}

Notice that the energy $E$ plays no role in the argument. As soon as the potential $V$ has the required local periodicity for infinitely many values of $q$, we have the estimate \eqref{f.gordonest} for infinitely many values of $q$, which in turn shows that no $E$ can be an eigenvalue.

It is clear that one can perturb about this situation a little bit and still deduce useful estimates. In light of this, the following definition is natural.

\begin{definition}
A bounded potential $V : \bbZ \to \bbR$ is called a \emph{Gordon potential} if there are positive integers $q_k \to \infty$ such that
\begin{equation}\label{f.gordondef1}
\max_{1 \le n \le q_k} |V(n) - V(n \pm q_k)| \le k^{-q_k}
\end{equation}
for every $k \ge 1$. Equivalently, there are positive integers $q_k \to \infty$ such that
\begin{equation}\label{f.gordondef2}
\forall \, C > 0 : \lim_{k \to \infty} C^{q_k} \max_{1 \le n \le q_k}
|V(n) - V(n \pm q_k)|
=
0.
\end{equation}
\end{definition}

\begin{theorem} \label{t.gordon.contspec}
If $V$ is a Gordon potential, then $H$ has purely continuous spectrum. More precisely, for every $E \in \bbC$ and every solution $u$ of \eqref{eq:diffEq1}, we have
\begin{equation}\label{f.gordonpotsolest}
\limsup_{|n| \to \infty} \left\| \mathbf{U}(n) \right\| \ge \frac12 \left\| \mathbf{U}(0) \right\|,
\end{equation}
where $\mathbf{U}$ is defined by \eqref{d:2by2:u2}.
\end{theorem}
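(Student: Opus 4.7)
The plan is to reduce the Gordon-potential case to the exactly-three-block-periodic case of Lemma~\ref{l.gordon} by a transfer matrix perturbation argument. For each $k$, define $V_k$ to be the $q_k$-periodic extension of $V$ restricted to $[1,q_k]$; that is, $V_k(n+q_k)=V_k(n)$ and $V_k(n)=V(n)$ for $1\le n\le q_k$. The Gordon hypothesis \eqref{f.gordondef1} then gives
\[
\max_{-q_k+1 \le n \le 2q_k} |V(n)-V_k(n)| \le k^{-q_k}.
\]
Let $u$ be any solution of \eqref{eq:diffEq1} and let $u_k$ be the solution of the eigenvalue equation for $V_k$ with the same initial data $\mathbf{U}_k(0)=\mathbf{U}(0)$. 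Since $V_k$ is $q_k$-periodic, Lemma~\ref{l.gordon} yields
\[
\max\bigl\{ \|\mathbf{U}_k(-q_k)\|, \|\mathbf{U}_k(q_k)\|, \|\mathbf{U}_k(2q_k)\| \bigr\} \ge \tfrac12 \|\mathbf{U}(0)\|
\]
for every $k$.

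The next step is to show that the true solution $\mathbf U$ stays close to $\mathbf U_k$ on the three-block window $[-q_k,2q_k]$. Let $A_E(m)$ and $A_E^{(k)}(m)$ denote the transfer matrices associated with $V$ and $V_k$, respectively, and let $M_E(n)$, $M_E^{(k)}(n)$ denote their products as in the statement preceding Lemma~\ref{l.gordon}. Set $C := 1+|E|+\|V\|_\infty$, so that $\|A_E(m)\|, \|A_E^{(k)}(m)\| \le C$ for all $m$ in the window, and note $\|A_E(m)-A_E^{(k)}(m)\| \le |V(m)-V_k(m)| \le k^{-q_k}$ there. A standard telescoping identity then gives, for $|n| \le 2q_k$,
\[
\|M_E(n)-M_E^{(k)}(n)\| \le |n|\, C^{|n|-1}\, k^{-q_k} \le 2q_k\, C^{2q_k-1}\, k^{-q_k},
\]
which tends to $0$ as $k\to\infty$ because $(C^2/k)^{q_k}\to 0$ once $k>C^2$. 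In particular, setting $\varepsilon_k := 2q_k C^{2q_k-1} k^{-q_k}\|\mathbf{U}(0)\|$, we obtain
\[
\|\mathbf U(n)-\mathbf U_k(n)\|=\|(M_E(n)-M_E^{(k)}(n))\mathbf U(0)\| \le \varepsilon_k
\]
for $n\in\{-q_k,q_k,2q_k\}$, with $\varepsilon_k\to 0$.

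Combining the last two displays yields
\[
\max\bigl\{\|\mathbf U(-q_k)\|,\|\mathbf U(q_k)\|,\|\mathbf U(2q_k)\|\bigr\} \ge \tfrac12\|\mathbf U(0)\|-\varepsilon_k.
\]
Since $q_k\to\infty$ and $\varepsilon_k\to 0$, taking $\limsup$ gives the estimate \eqref{f.gordonpotsolest}. This in turn precludes any nontrivial solution from lying in $\ell^2(\bbZ)$, so $H$ has no eigenvalues, hence purely continuous spectrum.

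The main obstacle is the transfer-matrix perturbation bound: one must ensure the $k^{-q_k}$ factor beats the geometric growth $C^{2q_k}$ that accumulates over the three-block window. This is exactly why the Gordon definition demands super-exponential approximation (the $C^{q_k}$ factor in \eqref{f.gordondef2}), and the equivalence of \eqref{f.gordondef1} and \eqref{f.gordondef2} is what makes this step run through cleanly. Everything else — the three-block bound for the periodic approximant and the conclusion that a nondecaying solution cannot be an eigenfunction — is already in place.
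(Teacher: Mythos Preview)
Your argument is correct and follows essentially the same route as the paper: define the $q_k$-periodic approximant $V_k$ agreeing with $V$ on $[1,q_k]$, apply Lemma~\ref{l.gordon} to the approximating solution $u_k$, and then push the three-block bound back to $u$ via a transfer-matrix perturbation estimate that is beaten by the super-exponential Gordon decay. The only cosmetic difference is that you spell out the telescoping bound $|n|\,C^{|n|-1}k^{-q_k}$ explicitly, whereas the paper simply writes $\|M_E(n)-M_{k,E}(n)\|\le C^{q_k}\max|V-V_k|$ and invokes \eqref{f.gordondef2}; one small caution is that your specific choice $C=1+|E|+\|V\|_\infty$ does not quite dominate $\|A_E(m)\|$ for all $E$ (take $E=V(m)$), so you should use, say, $C=2+|E|+\|V\|_\infty$ instead.
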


\begin{proof}
By assumption, there is a sequence $q_k \to \infty$ such that \eqref{f.gordondef2} holds. Given $E \in \bbC$, we consider a solution $u$ of \eqref{eq:diffEq} and, for every $k$, a solution $u_k$ of
$$
u_k(n+1) + u_k(n-1) + V_k(n) u_k(n) = E u_k(n)
$$
with $u_k(1) = u(1)$ and $u_k(0) = u(0)$, where $V_k$ is the $q_k$-periodic potential that coincides with $V$ on the interval $1 \le n \le q_k$. It follows from Lemma~\ref{l.gordon} that $u_k$ satisfies the estimate
$$
\max \big\{
\left\|\mathbf{U}_k(-q_k) \right\| ,
\left\|\mathbf{U}_k(q_k)  \right\| ,
\left\|\mathbf{U}_k(2q_k) \right\|
\big\} \ge \frac12 \left\| \mathbf{U}(0) \right\|,
$$
where $\mathbf U_k(n) = (u_k(n+1), u_k(n))^\top$, as usual. Since $V$ is very close to $V_k$ on the interval $[-q_k+1,2q_k]$ and $u$ and $u_k$ have the same initial conditions, we expect that they are close throughout this interval and hence $u$ obeys a similar estimate.

Let us make this observation explicit. Denote the transfer matrices associated with $V_k$ by $M_{k,E}(n)$. We have
\begin{align*}
\max_{-q_k \le n \le 2q_k}
\left\| \mathbf{U}(n) - \mathbf{U}_k(n) \right\|
& \le
\max_{-q_k \le n \le 2q_k}
\left\| M_E(n) - M_{k,E}(n) \right\| \left\| \mathbf{U}(0) \right\| \\ & \le C^{q_k} \max_{-q_k + 1 \le n \le 2 q_k} |V(n) - V_k(n)| \left\|
\mathbf{U}(0) \right\|,
\end{align*}
which goes to zero by \eqref{f.gordondef2}.
\end{proof}

We are now ready to apply the Gordon criterion in the context of limit-periodic potentials; compare the paper \cite{DG11a} by Damanik and Gan.

\begin{prop} \label{p.lp.gordon.generic}
There exists a dense $G_{\delta}$ set $\mathcal{G} \subseteq C(\Omega, \bbR)$ such that the potential $V_{\omega,f}$ defined by \eqref{e.lpdynomega} is a Gordon potential for all $f \in \mathcal{G}$ and all $\omega \in \Omega$. That is, $V_{\omega,f}$ satisfies \eqref{f.gordondef1} for suitable positive integers $q_k \to \infty$.
\end{prop}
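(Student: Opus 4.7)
The strategy is a standard Baire category argument that exploits the density of periodic sampling functions from Theorem~\ref{t:pdense}. The key observation is that, since $V_{\omega,f}(n) = f(\omega + n\alpha)$, the size of $|V_{\omega,f}(n) - V_{\omega,f}(n \pm q)|$ is uniformly bounded in $n$ and $\omega$ by $\|f - f \circ T^{\pm q}\|_\infty$, where $T\omega = \omega + \alpha$. Thus the Gordon condition~\eqref{f.gordondef1} follows uniformly in $\omega$ from a single estimate on $f$.

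For each $k \in \bbZ_+$, I would define
\[
\mathcal{G}_k
=
\set{ f \in C(\Omega,\bbR) : \text{there exists } q \ge k \text{ with } \|f - f \circ T^q\|_\infty < k^{-q}}.
\]
Openness is routine: if $f \in \mathcal{G}_k$ has witness $q$ and slack $\delta := k^{-q} - \|f - f\circ T^q\|_\infty > 0$, then any $g$ with $\|g - f\|_\infty < \delta/3$ obeys $\|g - g \circ T^q\|_\infty \le \|f - f \circ T^q\|_\infty + 2\|g - f\|_\infty < k^{-q}$, since $T$ acts as an isometry on $C(\Omega,\bbR)$.

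Density is where one uses Theorem~\ref{t:pdense}: given $f \in C(\Omega,\bbR)$ and $\varepsilon > 0$, choose $g \in \CP$ with $\|f - g\|_\infty < \varepsilon$. The function $g$ factors through some quotient $\Omega/\Omega_n$ with finite index $q_n$, and by picking $n$ large we may assume $q_n \ge k$, since $q_n \to \infty$. Because $q_n \alpha \in \Omega_n$, we have $g \circ T^{q_n} = g$ identically on $\Omega$, so $\|g - g\circ T^{q_n}\|_\infty = 0 < k^{-q_n}$ and thus $g \in \mathcal{G}_k$. The Baire Category Theorem then delivers $\mathcal{G} := \bigcap_k \mathcal{G}_k$ as a dense $G_\delta$.

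To finish, for any $f \in \mathcal{G}$ there exists $q_k \ge k$ (so $q_k \to \infty$) with $\|f - f \circ T^{q_k}\|_\infty < k^{-q_k}$; by the change of variables $\omega \mapsto \omega + q_k\alpha$, the same bound holds for $\|f - f \circ T^{-q_k}\|_\infty$. Then, for every $\omega \in \Omega$,
\[
\max_{1 \le n \le q_k}|V_{\omega,f}(n) - V_{\omega,f}(n \pm q_k)|
\le
\|f - f \circ T^{\pm q_k}\|_\infty
<
k^{-q_k},
\]
which is exactly~\eqref{f.gordondef1}. There is no serious obstacle here; the only point requiring a little care is the density step, where one must arrange both that the approximating periodic sampling function lies in $\CP$ (ensured by Theorem~\ref{t:pdense}) and that its period can be chosen $\ge k$ (ensured by $q_n \to \infty$, which in turn comes from the nested system $\Omega_n$ shrinking to the identity).
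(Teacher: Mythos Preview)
Your argument is correct and follows essentially the same Baire-category strategy as the paper: both use the density of $\CP$ (Theorem~\ref{t:pdense}) to produce a dense $G_\delta$ set of sampling functions satisfying a uniform Gordon estimate. The only cosmetic difference is that the paper packages its open sets $\mathcal{O}_k$ as unions of small balls around periodic sampling functions (with radii $\frac12(kq)^{-kq}$), whereas you write the condition $\|f - f\circ T^q\|_\infty < k^{-q}$ directly; these are equivalent up to the elementary observation $\|f - f\circ T^q\|_\infty \le 2\|f - g\|_\infty$ for $q$-periodic $g$, and your formulation is arguably a bit cleaner since it avoids the auxiliary periodic approximant in the final verification.
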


\begin{proof}
Define $\varepsilon_n = n^{-n}$ and
$$
\mathcal{O}_k
=
\bigcup_{q = 1}^\infty \bigcup_{f \in P_q} B\left(f, \frac{\varepsilon_{kq}}{2} \right), \quad k \in \bbZ_+,
$$
where $P_q \subset C(\Omega,\bbR)$ denotes the set of $q$-periodic sampling functions defined over $\Omega$. Since $\mathcal O_k$ is clearly open, it follows that
$$
\mathcal{G} := \bigcap_{k=1}^{\infty} \mathcal{O}_k
$$
is a $G_\delta$ in $C(\Omega,\bbR)$. One can easily verify that $\mathcal{G}$ contains all periodic sampling functions, so, since $\mathcal{P}$ is dense by Theorem~\ref{t:pdense}, $\mathcal{G}$ is dense as well.

We next show that all elements of $\mathcal{G}$ produce Gordon potentials. To that end, let $f \in \mathcal{G}$ and $\omega \in \Omega$ be given. By definition, $f \in \mathcal{O}_k$ for each $k$, so we may produce a sequence $f_k \in \mathcal{P}$ such that $f_k $ is $\ell_k$-periodic for some $\ell_k \in \bbZ_+$ and $\| f - f_k \|_{\infty} < \frac{1}{2} \varepsilon_{k \cdot \ell_k}$ for each $k$. Set $q_k = k \cdot \ell_k$ and note that $q_k \geq k$, whence $q_k \to \infty$. For all $k$, $n$ such that $1 \leq n \leq q_k$, we use the triangle inequality and periodicity to obtain
\begin{align*}
| V_{\omega}(n) - V_{\omega}(n \pm q_k) |
& =
\left| f(T^n \omega) - f(T^{n \pm q_k} \omega) \right| \\
& \leq
\left| f(T^n \omega) - f_k(T^{n} \omega) \right| + \left| f_k(T^n \omega) - f(T^{n \pm q_k} \omega) \right| \\
& =
\left| f(T^n \omega) - f_k(T^{n} \omega) \right| + \left| f_k(T^{n \pm k \ell_k} \omega) - f(T^{n \pm q_k} \omega) \right| \\
& <
\varepsilon_{k \cdot \ell_k} \\
& \le
k^{-q_k}.
\end{align*}
Taking the max over $n$ with $1 \leq n \leq q_k$ yields
$$
\max_{1 \leq n \le q_k} |V(n) - V(n \pm q_k)|
\le
k^{-q_k},
$$
and hence $V_{\omega} = V_{\omega,f}$ is a Gordon potential for all $\omega \in \Omega$ and all $f \in \mathcal{G}
$.
\end{proof}

\begin{proof}[Proof of Theorem \ref{t.lp.scspec.generic}]
Let $\mathcal{C}, \, \mathcal{G} \subseteq C(\Omega,\bbR)$ be the sets constructed in Theorem~\ref{t.lp.cantorspec.generic} and Proposition~\ref{p.lp.gordon.generic} respectively, and put $\mathcal{S} = \mathcal{C} \cap \mathcal{G}$. By the Baire Category Theorem, $\mathcal{S}$ is a dense $G_{\delta}$.  Since $\mathcal{S} \subseteq \mathcal{C}$, the zero-measure Cantor set statement follows.

For all $f \in \mathcal{S}$ and $\omega \in \Omega$, the Lebesgue measure of $\sigma(H_{\omega})$ is zero, which precludes the presence of absolutely continuous spectrum.  Similarly, for every $f \in \mathcal{S}$ and $\omega \in \Omega$, $V_{\omega}$ is a Gordon potential, so by Theorem~\ref{t.gordon.contspec}, it follows that $H_{\omega}$ has no eigenvalues. Thus, $H_\omega$ has purely singular continuous spectrum, as claimed.
\end{proof}

\section{Pure Point Spectrum}\label{s.pp}

We have seen that purely absolutely continuous spectrum and purely singular continuous spectrum are each dense phenomena in the space of limit-periodic Schr\"odinger operators, with the latter being even generic. This raises the natural question of what can be said about cases with pure point spectrum. It turns out that even this spectral type is a dense phenomenon. Clearly this is surprising as in these cases the spectral type of the limit object is as different from the spectral type of the approximants as it can be.

\begin{theorem}\label{t.lppp1}
For every limit-periodic $V$ and every $\varepsilon > 0$, there is a limit-periodic $\widetilde V$ with $\|V - \widetilde V\|_\infty < \varepsilon$ such that the Schr\"odinger operator with potential $\widetilde V$ has pure point spectrum.
\end{theorem}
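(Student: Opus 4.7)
The plan is to build $\widetilde V$ as a sup-norm limit of periodic potentials with rapidly increasing periods, modeled on P\"oschel's construction \cite{P83} of uniformly localized limit-periodic operators, but now centered on an arbitrary periodic approximation of $V$. Concretely, since $V$ is limit-periodic, I would first fix a periodic $\widetilde V^{(0)}$ with $\|V - \widetilde V^{(0)}\|_\infty < \varepsilon/2$, of period $q_0$. I would then inductively set $\widetilde V^{(n+1)} = \widetilde V^{(n)} + W_n$, where $W_n$ is $q_{n+1}$-periodic with $q_n \mid q_{n+1}$ and $\|W_n\|_\infty < \varepsilon \cdot 2^{-n-2}$; this forces $\widetilde V \defeq \lim_n \widetilde V^{(n)}$ to exist, be limit-periodic, and satisfy $\|V - \widetilde V\|_\infty < \varepsilon$.

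The heart of the construction is the choice of $W_n$ and $q_{n+1}$ at each stage. The goal is to arrange that the Bloch bands of $\widetilde H^{(n+1)} \defeq \Delta + \widetilde V^{(n+1)}$, regarded as $q_{n+1}$-periodic, all have width at most $e^{-\kappa q_{n+1}}$ for some $\kappa > 0$ independent of $n$, and that the associated Bloch quasimodes are (after suitable truncation and normalization) exponentially localized at rate $\sim\kappa$ on windows of length comparable to $q_{n+1}$. The mechanism is that $\widetilde V^{(n)}$, viewed $q_{n+1}$-periodically, produces a Floquet discriminant with many closed gaps; a small and carefully chosen $W_n$ simultaneously opens all of them and pushes the discriminant outside $[-2,2]$ except on a set of total length $\lesssim e^{-\kappa q_{n+1}}$. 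Converting this via the bound on bandwidth into exponential decay of Bloch waves is then a Combes--Thomas-type argument applied to the transfer matrices.

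With narrow bands secured at every stage, the final step is to take $\|W_n\|_\infty$ much smaller than $e^{-\kappa q_{n+1}}$, so that ordinary eigenvalue perturbation theory guarantees that the approximate localized eigenstates of $\widetilde H^{(n)}$ persist through all subsequent perturbations and converge, in $\ell^2(\bbZ)$, to honest eigenfunctions of the limit operator $\widetilde H \defeq \Delta + \widetilde V$. A completeness count---essentially one localized eigenstate per translate of a $q_n$-window within each band, summed over $n$---yields a complete orthonormal basis of eigenfunctions, so $\widetilde H$ has pure point spectrum.

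The main obstacle is the inductive narrow-band lemma: one needs explicit quantitative control on how the Floquet discriminant and the band widths change under perturbation by a potential of much longer period, with constants that do not deteriorate as $n \to \infty$. This is the P\"oschel-type KAM analysis, and the nontrivial content of the theorem lies in calibrating $\|W_n\|_\infty$, $q_{n+1}$, and $\kappa$ consistently so that the perturbative estimates close at every stage and the limit operator inherits a full basis of exponentially decaying eigenstates. Once this calibration is in place, the rest of the argument is essentially a bookkeeping exercise comparing Bloch quasimodes of $\widetilde H^{(n)}$ with eigenstates of $\widetilde H$.
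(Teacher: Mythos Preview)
Your proposal has a genuine gap at its core. P\"oschel's KAM scheme is inherently a large-coupling construction: it begins with a diagonal operator $\varepsilon^{-1}\lambda$ built from a distal sequence $\lambda$ (whose eigenfunctions are the $\delta_n$'s) and treats the Laplacian $\Delta$ as the small perturbation. You are attempting to run it in reverse, starting from an arbitrary periodic $\widetilde V^{(0)}$, whose operator $\Delta + \widetilde V^{(0)}$ has purely absolutely continuous spectrum and \emph{no} localized eigenstates whatsoever. The step where you invoke ``ordinary eigenvalue perturbation theory'' to track ``approximate localized eigenstates of $\widetilde H^{(n)}$'' through subsequent perturbations therefore has nothing to perturb from: periodic operators have no point spectrum, and their Bloch waves are extended regardless of bandwidth. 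Your claim that narrow bands plus a Combes--Thomas argument yield exponentially localized Bloch quasimodes is not correct as stated---Combes--Thomas gives resolvent decay in spectral \emph{gaps}, not eigenfunction decay inside bands. The ``calibration'' you flag as the main obstacle is not a technical refinement of P\"oschel; it would require a genuinely new mechanism, and indeed whether a small-coupling variant of P\"oschel's scheme can be made to work is posed as an open question in Section~\ref{sec:problems}.

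The paper takes a completely different, probabilistic route due to Damanik--Gorodetski \cite{DG16}. One writes the perturbation as $V_{\mathrm{lp}} = \sum_k V_{\mathrm{per}}^{(k)}$, where the values of each $V_{\mathrm{per}}^{(k)}$ on a fundamental domain of length $2q_k+1$ are drawn i.i.d.\ from a scaled density $d_k^{-1} r(d_k^{-1}\cdot)$, and then extends the Kunz--Souillard method. At each site one isolates the lowest-level random variable as ``essential,'' freezes all the others into a bounded deterministic background, and verifies the Kunz--Souillard integral-operator estimates (the analogues of Lemma~\ref{l.ks1} and Lemma~\ref{l.ks4}) uniformly in that background, so that the summability criterion of Proposition~\ref{p.asppcriterion} holds for almost every realization. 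One then simply selects a good realization to obtain $\widetilde V$. Bloch theory and bandwidths play no role in the argument.
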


Note that we do not fix the base dynamics and vary the sampling function only. Rather we perturb in the space of all limit-periodic potentials. It would be interesting to prove a variant of Theorem~\ref{t.lppp1} with fixed base dynamics.

The proof of Theorem~\ref{t.lppp1} actually does not really care about the fact that the potential $V$ we are perturbing is limit-periodic. Indeed, the proof directly yields the following more general result, which is due to Damanik and Gorodetski \cite{DG16}:

\begin{theorem}\label{t.lppp2}
For every bounded $V$ and every $\varepsilon > 0$, there is a limit-periodic potential $V_\mathrm{lp}$ with $\|V_\mathrm{lp}\|_\infty < \varepsilon$ such that the Schr\"odinger operator with potential $V + V_\mathrm{lp}$ has pure point spectrum.
\end{theorem}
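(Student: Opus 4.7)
The plan is to construct $V_\mathrm{lp} = \sum_{k \ge 1} W_k$, where each $W_k$ is $q_k$-periodic, the periods satisfy $q_k \mid q_{k+1}$ and grow super-exponentially, and $\|W_k\|_\infty \le \varepsilon \cdot 2^{-k-1}$. This ensures $V_\mathrm{lp}$ is limit-periodic with $\|V_\mathrm{lp}\|_\infty < \varepsilon$. The perturbations $W_k$ will be chosen inductively so that $H = \Delta + V + V_\mathrm{lp}$ satisfies Green's-function decay at every scale, from which pure point spectrum follows by standard arguments.

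At each stage I would specify a finite-dimensional family $\mathcal{F}_k$ of admissible $W_k$: the cleanest choice is to pick a sparse subset $S_k \subseteq \{0,\ldots,q_k-1\}$ and let the values of $W_k$ on $S_k$ vary independently in $[-\varepsilon 2^{-k-1}, \varepsilon 2^{-k-1}]$, with $W_k$ vanishing elsewhere in one period. A Wegner/Kunz--Souillard-style averaging argument on $\mathcal{F}_k$ (equipped with uniform product measure) then shows that for all but a small-measure subset of $\mathcal{F}_k$, the finite-volume Green's function of $\Delta + V + \sum_{j \le k} W_j$ on a period block decays exponentially at every energy outside an exceptional set of Lebesgue measure $\le 2^{-k}$. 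A Borel--Cantelli/diagonal selection then produces a single sequence $\{W_k\}$ with decay estimates at every scale and an exceptional energy set of measure zero.

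Passing to the limit is enabled by the rapid decay $\|W_k\|_\infty \ll e^{-C q_{k-1}}$ together with the $\PT$-type smallness condition from Section~\ref{s.ac}: on an interval of length $q_k$, the tail $\sum_{j>k}W_j$ is exponentially smaller than the decay rate already established at scale $q_k$, so by a Combes--Thomas-type perturbation argument the Green's-function decay persists for $H$ itself at that scale. Decay of the Green's function on arbitrarily large intervals at (Lebesgue-)almost every energy, combined with a polynomially bounded generalized eigenfunction expansion, forces the spectral measure to be supported on the countable set of energies that admit exponentially decaying eigenfunctions, giving pure point spectrum.

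The main obstacle is the inductive step. Since $V$ is arbitrary, the base operator carries no usable structure, and all decay must be manufactured from the sparse random perturbations $W_k$; this forces a careful quantitative analysis of how eigenvalues and resonances of the successive finite-volume operators depend on the parameter $W_k$, uniformly in the previously chosen $W_1,\ldots,W_{k-1}$. A secondary technical issue is a second-Melnikov-type non-resonance requirement: one must ensure that the perturbation $W_{k+1}$ introduced at the next scale does not recombine resonantly to destroy the decay built at scale $k$, which is the role of the exponential smallness $\|W_{k+1}\|_\infty \ll e^{-C q_k}$ combined with the Wegner estimate at stage $k+1$.
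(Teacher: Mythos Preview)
There is a genuine gap in the inductive step, and it sits precisely where you flag the ``main obstacle.'' A Wegner estimate only bounds the probability that a given energy lies within $\delta$ of the finite-volume spectrum; it does \emph{not} produce exponential off-diagonal decay of the Green's function. In multi-scale analysis one needs, in addition, an initial-scale estimate (positive Lyapunov exponent, large disorder, or extreme energies) to seed the induction, and your setup provides none: the background $V$ is arbitrary and each $W_k$ has amplitude at most $\varepsilon 2^{-k-1}$. With randomness this small and sparse there is no mechanism that forces $|G_{\Lambda}(E;m,n)|$ to decay at a rate surviving the next scale. A sharp way to see the inconsistency is to take $V\equiv 0$: you impose $\|W_{k+1}\|_\infty \ll e^{-Cq_k}$, which is exactly the Pastur--Tkachenko regime of Section~\ref{s.ac}, so $\Delta + \sum W_k$ has purely absolutely continuous spectrum and no Green's function decay at all on $\Sigma$. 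Thus the very smallness condition you need to pass to the limit is the one that destroys localization for at least some $V$.

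The paper's route is quite different and avoids Green's functions entirely. It runs the Kunz--Souillard machinery: one studies the averaged eigenfunction correlator $\rho_L(n,0)$, rewrites it via the change of variables \eqref{e.kseve3}--\eqref{e.kseve4} as an iterated integral involving the operators $S_E,T_E$ on $L^2(\bbR)$, and uses the norm bound $\|T_E^{(\ell)}T_E^{(\ell-1)}\|_{2,2}\le e^{-c\,d_\ell^2}$ (Lemma~\ref{l.ks4}). The point is that this contraction is strict for \emph{any} positive $d_\ell$, no matter how small, and it is uniform in the background potential, which is simply absorbed into the change of variables. The limit-periodic structure is handled by declaring, at each site $\ell$, the lowest-level random variable ``essential'' and freezing the rest into the background; one then averages last over the inessential variables. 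This is what allows arbitrarily small perturbations of an arbitrary $V$ to force summability of $a(n,m)$ and hence pure point spectrum via Proposition~\ref{p.asppcriterion}. Your scheme would need a comparable ``contraction for free'' ingredient, and Wegner/Combes--Thomas do not supply one.
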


In other words, a suitable arbitrarily small limit-periodic perturbation can turn any given spectral type into one that is pure point. Such a strong dominance property was previously only known for small random perturbations. Not coincidentally, the proof of Theorem~\ref{t.lppp2} is based on a generalization of the argument that yields the latter statement. Thus, let us first explain how pure point spectrum can be proved for random potentials, then for perturbations of fixed background potentials by arbitrarily small random potentials, and finally for the setting in question -- perturbations of fixed background potentials by arbitrarily small limit-periodic potentials.

The way we want to explain why random potentials lead to pure point spectrum is based on the Kunz-Souillard method \cite{KS80}. Suppose $V_\omega$ is a bounded random potential and $H_\omega$ is the associated Schr\"odinger operator. Integration with respect to the underlying probability measure $\mu$ will be denoted by $\mathbb{E}(\cdot)$. A key quantity in this approach is
$$
a(n,m) = \mathbb{E} \left( \sup_{t\in \bbR} \left| \left\langle \delta_n , e^{-itH_{\omega}} \delta_m \right\rangle \right| \right), \quad n,m \in \bbZ.
$$
The following sufficient criterion for (almost sure) pure point spectrum follows from the RAGE theorem:

\begin{prop}\label{p.asppcriterion}
Suppose that
$$
\sum_{n \in \bbZ} |a(n,m)| < \infty
$$
for $m = 0,1$. Then, for $\mu$-almost every $\omega$, the operator $H_\omega$ has pure point spectrum.
\end{prop}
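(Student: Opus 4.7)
The plan is to use a standard cyclicity-plus-tightness argument to reduce the claim to verifying a tightness bound for the orbit of $\delta_0$ and $\delta_1$ under the Schr\"odinger evolution. Pure point spectrum of $H_\omega$ is equivalent to $\mathcal{H}_{\pp}(H_\omega) = \ell^2(\bbZ)$; since $\mathcal{H}_{\pp}$ is closed and $H_\omega$-invariant, it suffices to show that some cyclic set for $H_\omega$ lies in $\mathcal{H}_{\pp}(H_\omega)$ for $\mu$-a.e.\ $\omega$. The pair $\{\delta_0,\delta_1\}$ is cyclic for any Schr\"odinger operator on $\ell^2(\bbZ)$: the three-term recurrence lets one solve for $\delta_n$ with $n\notin\{0,1\}$ in terms of iterated applications of $H_\omega$ to $\delta_0$ and $\delta_1$ by induction. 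Hence the goal reduces to showing that $\delta_m \in \mathcal{H}_{\pp}(H_\omega)$ for $m=0,1$ and almost every $\omega$.

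For that, I would invoke the standard characterization (a consequence of the RAGE theorem) that a vector $\psi$ belongs to $\mathcal{H}_{\pp}(H)$ if and only if the orbit $\{e^{-itH}\psi:t\in\bbR\}$ is precompact in $\ell^2(\bbZ)$. Since this orbit is automatically norm-bounded, precompactness is equivalent to tightness, which in our discrete setting reads
\[
\lim_{N\to\infty}\sup_{t\in\bbR}\sum_{|n|>N}\vabs{\langle\delta_n,e^{-itH}\psi\rangle}^2 = 0.
\]
So the plan is to verify this tightness for $\psi=\delta_m$ almost surely, directly from the hypothesis.

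The verification itself is short. By Tonelli applied to the nonnegative integrand,
\[
\mathbb{E}\!\left(\sum_{n\in\bbZ}\sup_{t}\vabs{\langle\delta_n,e^{-itH_\omega}\delta_m\rangle}\right) = \sum_{n\in\bbZ} a(n,m) < \infty,
\]
so for $\mu$-a.e.\ $\omega$ (intersecting the two full-measure sets corresponding to $m=0,1$) one has $\sum_{n}\sup_t\vabs{\langle\delta_n,e^{-itH_\omega}\delta_m\rangle}<\infty$ for both $m$. Unitarity of $e^{-itH_\omega}$ forces each matrix coefficient to have modulus at most $1$, so squaring weakens the bound, and commuting $\sup_t$ past a sum of nonnegative terms gives
\[
\sup_{t}\sum_{|n|>N}\vabs{\langle\delta_n,e^{-itH_\omega}\delta_m\rangle}^2 \le \sum_{|n|>N}\sup_t\vabs{\langle\delta_n,e^{-itH_\omega}\delta_m\rangle}^2 \le \sum_{|n|>N}\sup_t\vabs{\langle\delta_n,e^{-itH_\omega}\delta_m\rangle} \longrightarrow 0
\]
as $N\to\infty$ by the absolute convergence of the series just obtained. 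This is exactly the tightness criterion for $\delta_m$, so $\delta_m\in\mathcal{H}_{\pp}(H_\omega)$; combined with cyclicity, $H_\omega$ has pure point spectrum.

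I do not expect any real obstacle: the RAGE-based orbit-compactness characterization and the cyclicity of $\{\delta_0,\delta_1\}$ are both textbook. The only conceptual point worth flagging is the passage from an $\ell^1$-type bound on $\sup_t$-matrix-elements to the $\ell^2$-type tightness required by RAGE, which is automatic here because the unitarity $\vabs{\langle\delta_n,e^{-itH_\omega}\delta_m\rangle}\le 1$ makes squaring free.
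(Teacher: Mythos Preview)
Your proposal is correct and follows exactly the approach indicated by the paper, which simply states that the criterion ``follows from the RAGE theorem'' without giving details. You have filled in those details accurately: cyclicity of $\{\delta_0,\delta_1\}$, the RAGE precompactness characterization of $\mathcal{H}_{\pp}$, Tonelli to pass from the expectation bound to an almost-sure $\ell^1$ bound, and the unitarity trick to convert that into the required $\ell^2$ tightness.
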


For $L \in \bbZ_+$, we denote by $H_{\omega}^{(L)}$ the restriction of $H_{\omega}$ to $\ell^2(\{-L,\ldots, L\})$ with Dirichlet boundary conditions. For $|n|, |m| \le L$, we define $a_L(n,m)$ to be $a(n,m)$ with $H_{\omega}$ replaced by $H_{\omega}^{(L)}$, that is,
$$
a_L(n,m) = \mathbb{E} \left( \sup_{t\in \bbR} \left| \left\langle \delta_n , e^{-itH_{\omega}^{(L)}} \delta_m \right\rangle \right| \right).
$$
It is easy to see that $H_{\omega}^{(L)}$ has $2L+1$ real simple eigenvalues
$$
E_{\omega}^{L,1} < E_{\omega}^{L,2} < \cdots < E_{\omega}^{L,2L+1}.
$$
By our boundedness assumption there exists a compact interval $\Sigma_0$ that contains all eigenvalues $E_{\omega}^{L,k}$. For each $k$, let $\varphi_{\omega}^{L,k}$ denote a normalized eigenvector corresponding to $E_{\omega}^{L,k}$. We now define
$$
\rho_L(n,m) = \mathbb{E} \left( \sum_{k=1}^{2L+1} \left| \left\langle \delta_n, \varphi_{\omega}^{L,k} \right\rangle \right| \left| \left\langle \delta_m , \varphi_{\omega}^{L,k} \right\rangle \right| \right).
$$

\begin{lemma} \label{l.ks.est1}
{\rm (a)} For $n,m \in \bbZ$, we have
\begin{equation}\label{e.ksest1}
a(n,m) \leq \liminf_{L \to \infty} a_L(n,m).
\end{equation}
{\rm (b)} If $|n|,|m| \leq L$, then
\begin{equation}\label{e.ksest2}
a_L(n,m) \leq \rho_L(n,m).
\end{equation}
\end{lemma}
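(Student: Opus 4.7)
The plan is to dispatch (b) first by direct use of the finite-dimensional spectral theorem, and then derive (a) from strong operator convergence combined with Fatou's lemma. Part (a) is the delicate one, and I expect the main obstacle to be that the supremum over $t$ does not a priori commute with the $L \to \infty$ limit.

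For (b), I would diagonalize $H_\omega^{(L)}$ and expand the matrix coefficient as
\[
\langle \delta_n, e^{-itH_\omega^{(L)}} \delta_m \rangle
=
\sum_{k=1}^{2L+1} e^{-itE_\omega^{L,k}} \langle \delta_n, \varphi_\omega^{L,k} \rangle \langle \varphi_\omega^{L,k}, \delta_m \rangle.
\]
The triangle inequality kills the oscillating phases and produces a $t$-independent upper bound by the summand-wise absolute values; taking $\sup_t$ and then $\mathbb{E}(\cdot)$ yields exactly \eqref{e.ksest2}.

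For (a), the plan has three steps. First, extending $H_\omega^{(L)}$ by zero to a uniformly bounded operator on $\ell^2(\bbZ)$, I would note that finitely supported vectors form a common core on which $H_\omega^{(L)} \psi = H_\omega \psi$ whenever $L$ strictly exceeds the support of $\psi$; strong convergence on this dense set together with the uniform operator-norm bound then upgrades to $H_\omega^{(L)} \to H_\omega$ strongly on all of $\ell^2(\bbZ)$, whence $e^{-itH_\omega^{(L)}} \to e^{-itH_\omega}$ strongly for every fixed $t$ via the functional calculus. Second, to circumvent the sup-liminf exchange, I would exploit continuity of $t \mapsto \langle \delta_n, e^{-itH_\omega} \delta_m \rangle$ to replace $\sup_{t \in \bbR}$ by $\sup_{t \in \bbQ}$; for each rational $t$, the pointwise limit gives
\[
\left|\langle \delta_n, e^{-itH_\omega} \delta_m \rangle\right|
\leq
\liminf_{L \to \infty} \sup_{s \in \bbR} \left|\langle \delta_n, e^{-isH_\omega^{(L)}} \delta_m \rangle\right|,
\]
and taking $\sup_{t \in \bbQ}$ on the left pushes this bound up to $\sup_{t \in \bbR} \left|\langle \delta_n, e^{-itH_\omega}\delta_m\rangle\right|$. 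Finally, Fatou's lemma applied on $\Omega$ to the nonnegative random variables on the right-hand side delivers \eqref{e.ksest1}. The nontrivial technical point is this second step; once it is in place, (a) reduces to Fatou.
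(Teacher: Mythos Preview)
Your proof is correct and follows exactly the approach the paper indicates: part (b) via the eigenvector expansion of $\delta_m$ together with $|e^{-itE}|=1$ for real eigenvalues, and part (a) via strong convergence of the truncated operators (hence of their unitary groups) followed by Fatou's lemma. One small remark: in your second step for (a), the detour through $\sup_{t\in\bbQ}$ is unnecessary for the sup--liminf exchange, since the right-hand side $\liminf_{L\to\infty}\sup_s|\langle\delta_n,e^{-isH_\omega^{(L)}}\delta_m\rangle|$ is already $t$-independent and one may take $\sup_{t\in\bbR}$ on the left directly; the countable-sup trick is only genuinely needed to verify measurability of $\omega\mapsto\sup_t|\cdots|$ before invoking Fatou.
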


Part (a) follows from strong approximation and Fatou's lemma and part (b) follows by simply expanding $\delta_m$ in the basis of eigenvectors and using that the eigenvalues are real. It follows that we can estimate $a(n,m)$ from above by proving upper bounds for $\rho_L(n,m)$ that are uniform in $L$.

Let us now assume, for the time being, that the potential of the random Schr\"odinger operator $H_\omega$ is generated by independent identically distributed random variables, each of which has a bounded compactly supported density $r$. Thus, we can write
\begin{equation}\label{e.rhoLnmexp}
\rho_L(n,m)
=
\int \cdots \int \left( \sum_{k=1}^{2L+1} \left| \left\langle \delta_n, \varphi_{v_{-L},\ldots,v_L}^{L,k} \right\rangle \right| \left| \left\langle \delta_m , \varphi_{v_{-L},\ldots,v_L}^{L,k} \right\rangle \right| \right) \left( \prod_{j = -L}^L r(v_j) \right) \, dv_{-L} \ldots dv_L,
\end{equation}
since $H_{\omega}^{(L)}$ only depends on the random variables corresponding to $|j| \le L$. Moreover, it also follows that $\rho_L(n,m) = \rho_L(m,n)$ and $\rho_L(n,m) = \rho_L(n-m,0)$, so that it suffices to estimate $\rho_L(n,m)$ for $n \ge 0$ and $m = 0$.

Let us apply a suitable change of variables to the iterated integral expressing $\rho_L(n,0)$, $n \ge 0$. If $E$ is $E_{v_{-L},\ldots,v_L}^{L,k}$ and $u$ is $\varphi_{v_{-L},\ldots,v_L}^{L,k}$, then we have
\begin{equation}\label{e.kseve1}
u(\ell+1) + u(\ell-1) + v_\ell \, u(\ell) = E u(\ell)
\end{equation}
for $-L \le \ell \le L$, where $u(-L-1) = u(L+1) = 0$. We rewrite this identity as
\begin{equation}\label{e.kseve2}
v_\ell = E - \frac{u(\ell+1)}{u(\ell)} - \frac{u(\ell-1)}{u(\ell)},
\end{equation}
and this in turn motivates the change of variables
$$
\{ v_\ell \}_{\ell = -L}^L \quad \longleftrightarrow \quad \{ x_{-L} , \ldots , x_{-1} , E , x_1 , \ldots , x_L \},
$$
where
$$
E = E_{v_{-L},\ldots,v_L}^{L,k}
$$
and
\begin{equation}\label{e.kseve3}
x_\ell
=
\begin{cases}
\frac{\varphi_{v_{-L},\ldots,v_L}^{L,k} (\ell+1)}{\varphi_{v_{-L},\ldots,v_L}^{L,k} (\ell)} & \ell < 0 \\
\frac{\varphi_{v_{-L},\ldots,v_L}^{L,k} (\ell-1)}{\varphi_{v_{-L},\ldots,v_L}^{L,k} (\ell)} & \ell > 0,
\end{cases}
\end{equation}
so that
\begin{equation}\label{e.kseve4}
v_\ell = \begin{cases} E - x_{\ell-1}^{-1} - x_\ell & \ell < 0 \\ E - x_{-1}^{-1} - x_1^{-1} & \ell = 0 \\ E - x_{\ell+1}^{-1} - x_\ell & \ell > 0, \end{cases}
\end{equation}
with the conventions $x_{-L-1}^{-1} = x_{L+1}^{-1} = 0$ (which are natural in view of the definitions above).

Implementing this change of variables, the $2L+1$-fold iterated integral expressing $\rho_L(n,0)$ takes the following form:

\begin{lemma}\label{l.ks1}
Fix $L \in \bbZ_+$ and $n$ with $0< n \leq L$. Set
\begin{align*}
\phi_E(x) & = r(E - x), \\
\left( U_0 f \right)(x) & = |x|^{-1} f \left( |x|^{-1} \right), \\
\left( S_E  f \right)(x) & =  \int_{\bbR} \! r  \left( E - x - y^{-1} \right) f(y) \, dy, \\
\left( T_E  f \right)(x) & =  \int_{\bbR} \! r  \left( E - x - y^{-1} \right) |y|^{-1} f(y) \, dy.
\end{align*}

Then, we have
\begin{equation}\label{e.ksest3}
\rho_L(n,0)
=
\int_{\Sigma_0} \left\langle T_{E}^{n-1} S_{E}^{L-n} \phi_E , U S_{E}^{L} \phi_E \right \rangle_{L^2(\bbR)} \, dE.
\end{equation}
\end{lemma}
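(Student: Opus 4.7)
The plan is to evaluate $\rho_L(n,0)$ by changing variables in \eqref{e.rhoLnmexp} from $(v_{-L},\ldots,v_L)$ to $(E,x_{-L},\ldots,x_{-1},x_1,\ldots,x_L)$ via the formulas \eqref{e.kseve3} and \eqref{e.kseve4}. For each $k\in\{1,\ldots,2L+1\}$, the map $\Psi_k\colon v\mapsto(E^{L,k}_v,x^k_{-L},\ldots,x^k_L)$ (built from the $k$-th eigenvalue and its eigenvector) is a diffeomorphism off a measure-zero set, and its ranges for distinct $k$ tile $\Sigma_0\times(\bbR\setminus\{0\})^{2L}$ up to null sets, since the $2L+1$ eigenvalues of a fixed $H_\omega^{(L)}$ are pairwise distinct. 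After swapping sum and integral, the summed iterated integral in \eqref{e.rhoLnmexp} therefore becomes a single integral over $(E,x)$ with $E$ ranging over $\Sigma_0$.

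The heart of the computation is the Jacobian of $\Psi_k^{-1}$. From \eqref{e.kseve4}, $\partial v_\ell/\partial E\equiv 1$, while the block $(\partial v_\ell/\partial x_m)_{\ell,m}$ is almost tridiagonal with diagonal entries $-1$ and off-diagonal entries of the form $x_m^{-2}$ (with the anomalous row $\ell=0$ coupling $x_{\pm 1}^{-2}$ only). Expanding along the $E$-column and exploiting this tridiagonal structure (readily verified by hand for $L=1$ and $L=2$, and then by a straightforward induction or cofactor recursion in general) gives
\[
\left|\det\frac{\partial v}{\partial(E,x)}\right|
=
\sum_{\ell=-L}^{L}\left|\frac{\varphi^{L,k}(\ell)}{\varphi^{L,k}(0)}\right|^{2}
=
\frac{1}{|\varphi^{L,k}(0)|^{2}},
\]
where the first equality reads off each ratio $\varphi^{L,k}(\ell)/\varphi^{L,k}(0)$ as a telescoping product of $x_m^{-1}$'s via \eqref{e.kseve3}, and the second uses the normalization of $\varphi^{L,k}$. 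This is the classical Kunz--Souillard Jacobian identity, and getting it cleanly is the main technical obstacle. Combined with the integrand weight $|\varphi^{L,k}(n)||\varphi^{L,k}(0)|$, the factor of $|\varphi^{L,k}(0)|^{-2}$ coming from the Jacobian collapses these weights to $|\varphi^{L,k}(n)|/|\varphi^{L,k}(0)|=\prod_{\ell=1}^{n}|x_\ell|^{-1}$.

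It remains to organize the iterated integration to match the right-hand side of \eqref{e.ksest3}. Because $x_{L+1}^{-1}=x_{-L-1}^{-1}=0$, the boundary factors are $r(v_L)=\phi_E(x_L)$ and $r(v_{-L})=\phi_E(x_{-L})$. Successively integrating out $x_L,x_{L-1},\ldots,x_{n+1}$ is exactly $L-n$ applications of $S_E$, yielding $(S_E^{L-n}\phi_E)(x_n)$; integrating out $x_n,\ldots,x_2$ while absorbing the accumulated weights $|x_\ell|^{-1}$ is $n-1$ applications of $T_E$, yielding $(T_E^{n-1}S_E^{L-n}\phi_E)(x_1)$. Symmetrically (with no $|x|^{-1}$ weights on the negative side), integrating out $x_{-L},\ldots,x_{-2}$ produces $(S_E^{L-1}\phi_E)(x_{-1})$. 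Finally, the central factor $r(v_0)=r(E-x_1^{-1}-x_{-1}^{-1})$ together with the outstanding $|x_1|^{-1}$ and the last two integrations matches the remaining operator: the $x_{-1}$-integration is one further application of $S_E$ but evaluated at $x_1^{-1}$ rather than $x_1$, giving $(S_E^{L}\phi_E)(x_1^{-1})$, and the combination of the $|x_1|^{-1}$ factor with the argument inversion is by definition $(U_0 S_E^{L}\phi_E)(x_1)$. Integrating over $x_1$ yields the inner product, and integrating over $E\in\Sigma_0$ completes \eqref{e.ksest3}.
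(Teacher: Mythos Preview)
The paper gives no proof of this lemma beyond the remark that it follows by ``implementing this change of variables,'' and your proposal carries out precisely that change of variables along the standard Kunz--Souillard lines. The Jacobian identity $|\det \partial v/\partial(E,x)| = |\varphi^{L,k}(0)|^{-2}$, the identification of the surviving weight as $\prod_{\ell=1}^n|x_\ell|^{-1}$, and the successive integrations producing powers of $S_E$, $T_E$, and the single $U_0$ are all correct and match the intended (but unwritten) argument.
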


In this representation one then proceeds by applying the Cauchy-Schwarz inequality inside the integral and applying suitable norm estimates for the operators involved that are uniform in $E \in \Sigma_0$. Specifically, the following estimates, where we denote the norm of an operator $T : L^p(\bbR) \to L^q(\bbR)$ by $\|T\|_{p,q}$, can be proved and then used in this way:

\begin{lemma} \label{l:KS:11norm}
{\rm (a)} For every $E \in \bbR$, we have $\| S_E \|_{1,1} \leq 1$. \\[1mm]
{\rm (b)} For every $E \in \bbR$, we have $\| S_E \|_{1,2} \leq \|r\|_\infty^{1/2}$. \\[1mm]
{\rm (c)} For every $E \in \bbR$, we have $\| T_E \|_{2,2} \leq 1$. \\[1mm]
{\rm (d)} There exists $q < 1$ such that for every $E \in \Sigma_0$, we have $\| T_E^2 \|_{2,2} \leq q$.
\end{lemma}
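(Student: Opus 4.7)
Parts (a)--(c) each reduce to a standard manipulation exploiting the fact that $r$ is a probability density; the main obstacle is part (d), which requires a genuine improvement over the trivial bound $\|T_E^2\|_{2,2}\le \|T_E\|_{2,2}^2\le 1$ obtained from (c).

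For (a), I would apply Fubini to $\|S_E f\|_1 \le \int\!\!\int r(E-x-y^{-1})|f(y)|\,dy\,dx$ and use the translation invariance of Lebesgue measure: for each fixed $y$, the substitution $u=E-x-y^{-1}$ gives $\int r(E-x-y^{-1})\,dx = \int r(u)\,du = 1$, and Tonelli yields $\|S_E f\|_1\le\|f\|_1$. For (b), the crucial observation is that, for each fixed $y$, one has $\|r(E-\cdot-y^{-1})\|_2^2 = \int r^2 \le \|r\|_\infty\int r = \|r\|_\infty$. Minkowski's integral inequality then gives
\[
\|S_E f\|_2
=
\Bigl\| \int r(E-\cdot-y^{-1}) f(y)\,dy \Bigr\|_2
\le
\int \|r(E-\cdot-y^{-1})\|_2 \, |f(y)|\,dy
\le
\|r\|_\infty^{1/2} \|f\|_1.
\]

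For (c), I would perform the change of variables $z=y^{-1}$ inside the integral defining $T_E$; noting that $|y|^{-1}\,dy$ becomes (up to a sign that is immaterial for the $L^2$ norm) $|z|^{-1}\,dz$, we obtain the factorization $T_E = R_E V$, where
\[
R_E g(x) = \int r(E-x-z) g(z)\,dz,
\qquad
(Vf)(z) = \frac{f(1/z)}{|z|}.
\]
A direct computation using $u=1/z$ shows that $V$ is an isometric involution of $L^2(\bbR)$, while $R_E$ is (up to a translation and a reflection) convolution with the probability density $r$; Young's inequality (equivalently, $\|\widehat r\|_\infty = \widehat r(0) = 1$) therefore gives $\|R_E\|_{2,2}\le 1$, and hence $\|T_E\|_{2,2}\le 1$.

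Part (d) is the main obstacle. Using (c) one writes $T_E^2 = R_E V R_E V$; each factor has operator norm one, so to gain a factor $q<1$ one must exploit the incompatibility between the regions where $R_E$ and $V R_E V$ come close to being isometries. Concretely, equality in $\|R_E g\|_2 = \|g\|_2$ can only be approached by functions $g$ whose Fourier mass concentrates at $\xi=0$, since $\widehat r$ is continuous with $\widehat r(0)=1$ as the unique (and strict) global maximum — this is where the hypothesis that $r$ is a bounded, compactly supported density enters, giving $|\widehat r(\xi)|<1$ for all $\xi\ne 0$. The conjugate operator $VR_E V$ is likewise a norm-one operator, but the involution $V$ interchanges the ``near zero'' and ``near infinity'' regimes in position space and thus spreads the Fourier content away from $\xi=0$, preventing $R_E$ and $VR_E V$ from simultaneously acting as near-isometries on the same unit vector. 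To make this into a hard bound, I would argue in two steps: first show that for each fixed $E$ one has $\|T_E^2\|_{2,2}<1$ — the strict inequality follows because any sequence $f_n$ with $\|f_n\|_2=1$ and $\|T_E^2 f_n\|_2\to 1$ would, after passing to a weakly convergent subsequence and using that $R_E$ is norm-attaining only in the asymptotic Fourier sense described above, force both $V f_n$ and $V R_E V f_n$ to lose mass at infinity in Fourier, contradicting unitarity of $V$; second, upgrade to uniformity in $E\in\Sigma_0$ by noting that $E\mapsto T_E$ is continuous in operator norm (because $R_E$ depends continuously on the translation $E$ via its convolution kernel) and $\Sigma_0$ is compact, so $q\defeq \sup_{E\in\Sigma_0}\|T_E^2\|_{2,2}<1$. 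The delicate point I expect to have to be most careful about is converting the qualitative statement ``Fourier spreading by $V$'' into the quantitative strict inequality, and this is where I would spend the bulk of the effort, possibly by passing to a variational formulation of the operator norm and exploiting the fact that the compact support of $r$ yields real-analyticity of $\widehat r$, so $|\widehat r(\xi)|\le 1-c\xi^2$ for $|\xi|$ small.
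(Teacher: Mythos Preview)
Your arguments for (a)--(c) are correct and essentially standard; the factorization $T_E = R_E V$ with $V$ the unitary involution $(Vf)(z)=|z|^{-1}f(1/z)$ and $R_E$ the self-adjoint convolution-type operator is exactly the right way to see (c). Your uniformity step for (d) is also sound: $E\mapsto R_E$ \emph{is} norm-continuous, since on the Fourier side $R_{E_1}-R_{E_2}$ is multiplication by $(e^{-iE_1\xi}-e^{-iE_2\xi})\widehat r(\xi)$, and Riemann--Lebesgue for $\widehat r$ lets you make this uniformly small in $\xi$.

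The genuine gap is your argument for the strict inequality $\|T_E^2\|_{2,2}<1$ at fixed $E$. Passing to a weakly convergent subsequence does not help here: neither $R_E$ nor $T_E$ is compact, and any almost-maximizing sequence for $R_E$ must have Fourier mass escaping to $\xi=0$ (hence position mass escaping to infinity), so weak limits are typically zero and carry no information. Your heuristic that $V$ swaps the ``Fourier near $0$'' and ``Fourier spread out'' regimes is morally the right tension, but turning it into an honest bound requires quantifying both sides simultaneously, and that is precisely the work you have not done. The note that compact support of $r$ gives $|\widehat r(\xi)|^2 \le 1 - c\xi^2$ near $0$ is correct but by itself does not close the argument, since $V$ has no clean description on the Fourier side.

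The paper does not give a proof of this lemma, but the very next lemma (Lemma~\ref{l.ks4}) reveals the intended route: one proves directly a quantitative estimate of the form
\[
\|T_E^2\|_{2,2}
\;\le\;
\frac{1}{4}\Bigl(15 + \sup_{|\eta|\ge K_0} |\widehat r(\eta)|^2\Bigr)^{1/2},
\]
which is strictly less than $1$ because $|\widehat r(\eta)|<1$ for $\eta\neq 0$ and $\widehat r(\eta)\to 0$ at infinity, and which is manifestly uniform in $E$ (so no separate continuity argument is needed). The proof of such a bound proceeds by splitting in \emph{position} space according to $|x|\le 1$ versus $|x|>1$ --- exactly the dichotomy that $V$ interchanges --- and estimating each piece explicitly; see, e.g., \cite[Chapter~9]{CFKS} or \cite{Simon1982CMP}. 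I would recommend abandoning the soft compactness strategy and carrying out this explicit splitting instead.
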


With these estimates one can show exponential decay of $\rho_L(n,0)$ and hence Proposition~\ref{p.asppcriterion} is clearly applicable.

Before turning our attention to limit-periodic perturbations and the idea underlying the proof of Theorem~\ref{t.lppp2}, we first point out that the argument sketched above can be modified, and in fact generalized, in a number of ways.

First of all, while independence is crucially used, it is not necessary to consider identically distributed random variables. Indeed, if the density $r$ changes from site to site, one simply replaces $r$ by an $\ell$-dependent density $r_\ell$ as one processes the random variable at site $\ell$. For simplicity we will consider rescalings of a fixed density: $r_\ell(x) = d_\ell^{-1} r(d^{-1}_\ell x)$ for suitable $d_\ell > 0$. This results in obvious changes to \eqref{e.rhoLnmexp} and Lemma~\ref{l.ks1}. The necessary change to Lemma~\ref{l:KS:11norm} is trickier. Namely, part (d) has no obvious replacement and needs to be modified via a quantitative estimate that is based on the following key lemma \cite{Simon1982CMP}:

\begin{lemma}\label{l.ks4}
For $\lambda \ge 0$ sufficiently small, $\widehat r$, the Fourier transform of $r$, obeys
\begin{equation} \label{eq:KS:FourierdecayAwayFrom0}
\sup_{|\eta| \ge \lambda} |\widehat r(\eta)|^2 \le e^{- c |\lambda|^2}
\end{equation}
for a suitable constant $c = c(r) > 0$. Furthermore, there exist constants $K_0 = K_0(r)$ and $\lambda = \lambda(r)$ such that for every $\ell$ and every $E \in \Sigma_0$, we have
\begin{equation} \label{eq:KS4:bdAwayFr1}
\begin{split}
\|T_E^{(\ell)} T_E^{(\ell-1)}\|_{2,2}
& \le
\frac{1}{4}
\left( 15
+ \sup_{|\eta| \ge K_0 \min\{d_{\ell}, d_{\ell-1}\}} \left|\widehat{r}(\eta)\right|^2 \right)^{1/2} \\
& \le
e^{- c K_0^2 \min\{d_{\ell}^2, d_{\ell-1}^2, \lambda \}}.
\end{split}
\end{equation}
\end{lemma}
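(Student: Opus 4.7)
The statement has two components. The first is a standard Fourier-analytic estimate for the density $r$, while the second quantifies a strict contraction of the product $T_E^{(\ell)} T_E^{(\ell-1)}$ compared to the trivial bound $\|T_E^{(\ell)}\|_{2,2} \le 1$ from Lemma~\ref{l:KS:11norm}(c).

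For the first assertion \eqref{eq:KS:FourierdecayAwayFrom0}, the plan is to exploit the representation $|\widehat{r}(\eta)|^2 = \widehat{g}(\eta)$, where $g = r * \check{r}$ is the autocorrelation of $r$; equivalently, $g$ is the density of $X - Y$ for i.i.d.\ copies $X,Y$ with law $r$. Thus $g$ is a non-negative continuous probability density of compact support, hence has moments of all orders, and $|\widehat{r}|^2$ is its characteristic function. This yields the Taylor expansion $|\widehat{r}(\eta)|^2 = 1 - \sigma^2 \eta^2 + O(\eta^4)$ with $\sigma^2 = \int t^2 g(t)\,dt > 0$. Combined with the strict inequality $|\widehat{r}(\eta)|^2 < 1$ for $\eta \neq 0$ (since $r$ is not a point mass) and Riemann--Lebesgue decay at infinity, one obtains $\sup_{|\eta| \ge \lambda} |\widehat{r}(\eta)|^2 \le e^{-c\lambda^2}$ for $\lambda \in [0,\lambda_0]$ and suitable $c = c(r) > 0$.

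The heart of the lemma is the first bound in \eqref{eq:KS4:bdAwayFr1}. The plan is to put $T_E^{(\ell)}$ into a form amenable to Fourier analysis by performing the substitution $u = 1/y$. Setting $(Uf)(y) = |y|^{-1} f(1/y)$, one checks that $U$ is a unitary involution on $L^2(\bbR)$, and
\[
(T_E^{(\ell)} f)(x) = \int r_\ell(E - x - u)\,(Uf)(u)\,du = (r_\ell * Uf)(E - x),
\]
so $T_E^{(\ell)} = R_E \circ C^{(\ell)} \circ U$ where $(R_E g)(x) = g(E-x)$ is unitary and $C^{(\ell)}$ is convolution by the dilated density $r_\ell(x) = d_\ell^{-1} r(x/d_\ell)$. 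Since $\|\widehat{r_\ell}\|_\infty = \widehat{r_\ell}(0) = 1$, this reproduces $\|T_E^{(\ell)}\|_{2,2} \le 1$. To obtain strict contraction for the product, one would decompose Fourier space into the low-frequency region $|\xi| \le K_0 / d_\ell$, where the multiplier $\widehat{r_\ell}(\xi)$ is close to $1$ but the interplay between $C^{(\ell-1)}$ and the non-Fourier-local involution $U$ produces a geometric improvement (yielding the coefficient $15/16$), and the high-frequency region, where $|\widehat{r_\ell}(\xi)| \le \sup_{|\eta| \ge K_0} |\widehat{r}(\eta)|$ produces the factor $\sup|\widehat{r}|^2/16$; the analogous splitting applied to $r_{\ell - 1}$ introduces $\min\{d_\ell, d_{\ell-1}\}$ in the final bound. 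This is essentially Simon's calculation in \cite{Simon1982CMP}, and the delicate bookkeeping of the constants $15$ and $16$ is the principal technical obstacle, since $U$ is not a Fourier multiplier and therefore the Fourier decomposition does not simultaneously diagonalize the full composition $R_E C^{(\ell)} U R_E C^{(\ell-1)} U$.

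The second inequality in \eqref{eq:KS4:bdAwayFr1} is then routine: combining the first bound with the Gaussian estimate at $\eta = K_0 \min\{d_\ell, d_{\ell-1}\}$ gives
\[
\tfrac{1}{4}\bigl(15 + e^{-cK_0^2 \min\{d_\ell^2, d_{\ell-1}^2\}}\bigr)^{1/2} \le e^{-c' K_0^2 \min\{d_\ell^2,\, d_{\ell-1}^2,\, \lambda\}}
\]
for suitable $c' > 0$, using the elementary inequality $\sqrt{1 - x/16} \le e^{-x/32}$ valid for $x \in [0,1]$. The $\min\{\cdot,\lambda\}$ in the exponent reflects the saturation of the Gaussian estimate at the scale $\lambda_0$, past which it is replaced by a uniform constant $e^{-c_1}$; shrinking $c'$ absorbs this case.
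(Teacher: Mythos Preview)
The paper does not actually prove this lemma; it states the result, attributes it to Simon \cite{Simon1982CMP}, and moves on. So there is no detailed argument in the paper to compare against, and your sketch in fact goes further than the paper itself does.

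That said, your proposal is an outline rather than a proof, and the gap sits exactly where the content of the lemma lies. Your treatment of \eqref{eq:KS:FourierdecayAwayFrom0} via the autocorrelation $g = r * \check r$ is correct and standard, and the factorization $T_E^{(\ell)} = R_E \circ C^{(\ell)} \circ U$ with $U$ the $L^2$-unitary involution $(Uf)(y) = |y|^{-1} f(1/y)$ is both accurate and the right starting point. The deduction of the second line of \eqref{eq:KS4:bdAwayFr1} from the first via $\sqrt{1 - x/16} \le e^{-x/32}$ is also fine.

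What is missing is the actual mechanism that produces the numerical gain $15/16$ on the low-frequency piece. You write that ``the interplay between $C^{(\ell-1)}$ and the non-Fourier-local involution $U$ produces a geometric improvement,'' but this is an assertion, not an argument. Since $U$ does not commute with Fourier projections, a low/high frequency split of $f$ alone does not immediately yield a contraction on the low-frequency part: after applying $U$, low-frequency mass can land anywhere, and the second multiplier $\widehat{r_\ell}$ is near $1$ on low frequencies. One must carry out the decomposition more carefully---splitting at both stages, tracking the cross terms, and using that $\|C^{(\ell)}\|_{2,2} \le 1$ together with the smallness of $\widehat{r_{\ell}}$ and $\widehat{r_{\ell-1}}$ at high frequencies---to see how four pieces combine to give $(15 + \sup|\widehat r|^2)/16$ rather than something trivial. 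You correctly identify this as ``the principal technical obstacle,'' but identifying an obstacle is not the same as overcoming it; as written, the proposal defers the entire substance of the estimate to the citation \cite{Simon1982CMP}, which is precisely what the paper does too.
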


With this replacement of Lemma~\ref{l:KS:11norm}.(d) one can still prove that Proposition~\ref{p.asppcriterion} is applicable under suitable assumptions on the sequence $\{ d_\ell \}$ (which obviously cannot decay too fast since the modification of Lemma~\ref{l:KS:11norm}.(b) results in a factor $d_\ell^{-1/2}$ and the influence of the exponential in \eqref{eq:KS4:bdAwayFr1} becomes weaker with smaller $d_{\ell}, d_{\ell-1}$).

The second generalization of the basic argument outlined above is the addition of a fixed bounded background potential. Indeed, this simply results in obvious changes to \eqref{e.kseve1}--\eqref{e.kseve4} and then to Lemma~\ref{l.ks1}. Note that this generalization produces the precursor to Theorem~\ref{t.lppp2}, which says that any fixed bounded potential can be turned into one whose associated Schr\"odinger operator has pure point spectrum by an arbitrarily small \emph{random} perturbation.

The third generalization is the most severe. It is here that the independence of the values of the perturbation potential is given up, which is clearly necessary if we are to produce limit-periodic perturbations.

To explain what kind of correlations we wish to introduce, let us discuss the general setup of the proof of Theorem~\ref{t.lppp2}. The limit-periodic perturbation $V_\mathrm{lp}$ will be obtained as a convergent series
$$
V_\mathrm{lp} = \sum_{k=1}^\infty V_\mathrm{per}^{(k)}
$$
where $V_\mathrm{per}^{(k)}$ is $(2q_k + 1)$-periodic and we have
$$
\sum_{k=1}^\infty \|V_\mathrm{per}^{(k)}\|_\infty < \varepsilon
$$
in order to satisfy $\|V_\mathrm{lp}\|_\infty < \varepsilon$. The values of $V_\mathrm{per}^{(k)}$ on $\{-q_k, -q_k+1, \ldots, q_k - 1, q_k\}$ will be generated by independent identically distributed random variables with density $r_k(x) = d_k^{-1} r(d^{-1}_k x)$, where $r$ is a fixed density and $d_k > 0$ is chosen suitably. Note that for each $\ell$, there will be infinitely many random variables (with scaling factors $d_{k}$, $k \ge k_0(\ell)$) participating in determining the value of $V_\mathrm{lp}$ at site $\ell$. The key idea will be to choose a decaying sequence $\{d_k\}$ and to consider the lowest-level participating random variable with scaling factor $d_{k_0(\ell)}$ as the \emph{essential} random variable at site $\ell$, and all others as \emph{inessential} random variables that are frozen and put in the background potential for the purpose of estimating the relevant quantities as above \emph{uniformly} in the background potential. One can then average the resulting estimate over the inessential random variables to obtain the desired estimate of the overall expectation defining $\rho_L(n,0)$. Of course the periodicity of $V_\mathrm{per}^{(k)}$ results in another change to the equations \eqref{e.kseve1}--\eqref{e.kseve4} and then to Lemma~\ref{l.ks1}.

This strategy may be implemented in a way that allows one to choose the sequences $\{q_k\}$ (determining the periods of the $V_\mathrm{per}^{(k)}$'s) and $\{d_k\}$ (determining the sizes of the $V_\mathrm{per}^{(k)}$'s) so that Proposition~\ref{p.asppcriterion} may be applied, showing that almost all of these limit-periodic perturbations will produce operators with pure point spectrum. Choosing one of them then establishes Theorem~\ref{t.lppp2}. We refer the reader to \cite{DG16} for further details. Note that in this line of reasoning one only gets the pure point property for the $V_\mathrm{lp}$ as constructed (chosen from the full measure set), but not for the elements of its hull. As a consequence, as pointed out above, the formulation of the result is different from some of the earlier results about the spectral type and does not apply uniformly across the hull of the limit-periodic sequence in question.

\bigskip

It is however possible to prove pure point spectrum uniformly across the hull in certain scenarios (which are not dense). This follows from a combination of works of P\"oschel \cite{P83} and Damanik-Gan \cite{DG11}.

First, we discuss P\"oschel's work contained in \cite{P83}. Given a function $\Omega:[0,\infty) \to [0,\infty)$, define the quantities $\Phi_\Omega(t)$, $\kappa$, and $\Psi_\Omega(t)$ for $t>0$ by
\begin{align*}
\Phi_\Omega(t)
& :=
t^{-4} \sup\{\Omega(r) \, e^{-t r} : r \geq 0\} \\
\kappa_t
& :=
\set{\{t_j\}_{j = 0}^\infty : t \geq t_0 \geq t_1 \geq \cdots  \text{ and } \sum_{j = 0}^\infty t_j \leq t} \\
\Psi_\Omega(t)
& :=
\inf_{(t_j) \in \kappa_t} \prod_{j = 0}^\infty \Phi(t_j)2^{-j-1}
\end{align*}
for $t > 0$. We call $\Omega$ an \emph{approximation function} if $\Phi_\Omega(t)$ and $\Psi_\Omega(t)$ are finite for every $t > 0$. For example, for any $\alpha \geq 0$, $\Omega(r) = r^\alpha$ defines an approximation function.

Now, suppose that $\mathcal{M}$ is a Banach subalgebra of $\ell^\infty(\bbZ)$ with respect to the operations of pointwise addition and pointwise multiplication; in particular, we assume that $\mathbbm{1}$ is contained in $\mathcal{M}$ (where $\mathbbm{1}(n) \equiv 1$). We say that $\lambda : \bbZ \to \bbR$ is a \emph{distal sequence for} $\mathcal{M}$ if, for each $k \in \bbZ\setminus\{0\}$, one has $\big( \lambda - S^k\lambda \big)^{-1} \in \mathcal{M}$, and one has the bound
\[
\left\| \big(\lambda - S^k \lambda\big)^{-1} \right\|_\infty
\leq
\Omega(|k|),
\quad
\text{for all } k \neq 0,
\]
where $\Omega$ is an approximation function. Here, the inverse refers to the multiplicative inverse in the Banach algebra.

P\"oschel proved the following theorem in \cite{P83}:

\begin{theorem}\label{t.poeschel}
If $\lambda$ is a distal sequence for the Banach algebra $\mathcal{M}$, then there exists $\varepsilon_0 > 0$ such that the following holds true. For any $0< \varepsilon \leq \varepsilon_0$, there is a sequence $V$ so that $\lambda - V \in \mathcal{M}$ and the Schr\"odinger operator $H_{\varepsilon^{-1}V} = \Delta + \varepsilon^{-1} V$ is spectrally localized with eigenvalues $\{ \varepsilon^{-1} \lambda_j : j \in \bbZ\}$. Moreover, if $\psi_k$ is the normalized eigenvector corresponding to the eigenvalue $\lambda_k$, then there are constants $c>0$ and $d>1$ such that
\[
|\psi_k(n)|^2
\leq
c d^{-|k-n|}
\]
for all $k$ and $n$.
\end{theorem}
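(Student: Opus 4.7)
The plan is to diagonalize $H_{\varepsilon^{-1}V}$ by constructing a unitary conjugation via a KAM-type Newton iteration whose small-denominator estimates are provided precisely by the distality hypothesis. Rescaling, it is equivalent to find a sequence $v \in \mathcal{M}$ small and a unitary $U$ on $\ell^2(\bbZ)$ satisfying
\[
U^\ast (D_{\lambda + v} + \varepsilon \Delta) \, U = D_\lambda,
\]
where $D_\mu$ denotes multiplication by the sequence $\mu$ and $\Delta$ is the free Laplacian. Once this is achieved, $V = \lambda + v$ realizes $\lambda - V \in \mathcal{M}$, the eigenvalues of $H_{\varepsilon^{-1}V}$ are exactly $\varepsilon^{-1}\lambda_j$, and its eigenvectors are the columns of $U$.

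For the linearized step I would write $U = e^{S}$ with $S$ skew-adjoint and expand to first order, obtaining the cohomological equation $[D_\lambda, S] + v + \varepsilon \Delta = $ (quadratic remainder). Reading this in matrix elements gives $(\lambda_i - \lambda_j) S_{ij} = -\varepsilon \Delta_{ij}$ for $i \neq j$, while the diagonal constraint fixes the correction $v$. Since $\Delta$ has zero diagonal and is tridiagonal, only the differences $\lambda_i - \lambda_{i \pm 1}$ appear; the distality bound $\|(\lambda - S^{\pm 1}\lambda)^{-1}\|_\infty \leq \Omega(1)$ yields $S$ with range-$1$ off-diagonal support and norm of order $\varepsilon \Omega(1)$. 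Crucially, because $(\lambda - S^k \lambda)^{-1}$ lies in $\mathcal{M}$, the resulting $S$ is an operator whose diagonals lie in $\mathcal{M}$, and conjugation keeps us within a suitable scale of Banach algebras of operators with off-diagonal decay.

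Iterating produces a sequence of transformations $e^{S^{(n)}}$ whose residual perturbations shrink quadratically (Newton convergence) but whose off-diagonal support spreads to range $\sim 2^n$, so at step $n$ one inverts differences $\lambda - S^k\lambda$ for $|k|$ up to roughly $2^n$, paying a factor polynomial in $\Omega(2^n)$. The twin finiteness conditions $\Phi_\Omega(t), \Psi_\Omega(t) < \infty$ are exactly the Diophantine-type hypotheses that ensure the super-exponential gain $\varepsilon^{2^n}$ beats the polynomial loss $\Omega(2^n)^{c_n}$; the splitting $\{t_j\} \in \kappa_t$ in the definition of $\Psi_\Omega$ provides the bookkeeping device to distribute the overall decay budget $t$ across the infinitely many Newton steps. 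For $\varepsilon \leq \varepsilon_0$ one obtains convergence of the composed unitary $U = \cdots e^{S^{(2)}} e^{S^{(1)}}$ and of the cumulative diagonal correction to an element $v \in \mathcal{M}$. The banded structure of each $S^{(n)}$, combined with a geometric estimate of its norm, gives $|U_{jk}| \leq c \, d^{-|j-k|}$ for some $d > 1$, yielding the claimed exponential localization of the eigenvectors $\psi_k$.

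The principal obstacle is the simultaneous control of two competing norms along the iteration: a sup-norm (or algebra) quantity measuring the size of the perturbation, and an off-diagonal-decay quantity measuring its locality. Solving the linearized equation via division by $\lambda - S^k\lambda$ preserves $\mathcal{M}$ by distality but degrades the decay rate by a $k$-dependent factor bounded by $\Omega(k)$, and the Newton step itself broadens support. Designing a nested scale of operator Banach algebras on which each of these operations (commutation with $D_\lambda$, inversion, exponential, composition) is quantitatively controlled, and then balancing the quadratic gain against the $\Omega$-governed loss via the $\Phi_\Omega / \Psi_\Omega$ machinery, is the analytic heart of the argument.
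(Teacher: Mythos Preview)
The paper does not supply its own proof of this theorem; it merely attributes the result to P\"oschel \cite{P83} and then moves on to discuss examples and the Damanik--Gan extension to the hull. So there is no in-paper argument against which to compare your proposal in detail.

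That said, your outline is faithful to P\"oschel's original strategy: rescale so the operator is a diagonal $D_\lambda$ plus a small off-diagonal perturbation $\varepsilon\Delta$, solve the linearized conjugacy (cohomological) equation by dividing by $\lambda - S^k\lambda$ (which is where distality and the Banach-algebra hypothesis enter), iterate \`a la Newton, and use the approximation-function machinery $\Phi_\Omega,\Psi_\Omega$ to dominate the small-divisor losses by the super-exponential gain. Your identification of the main difficulty---maintaining simultaneous control of size and off-diagonal decay on a nested scale of operator algebras---is exactly the technical core of \cite{P83}. One small point: in your linearized equation you should be careful that the diagonal correction $v$ is not determined at the first step (since $\Delta$ has zero diagonal) but accumulates from the quadratic remainders at later steps; this is what forces $v$ to be genuinely nonzero and to lie in $\mathcal{M}$.
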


He also supplied some examples. Here is one that is particularly interesting because it shows not only that the theorem above can be applied and yields uniformly localized limit-periodic Schr\"odinger operators, but also that the spectrum in this case has no gaps (which is surprising in view of our earlier results showing that Cantor spectra are typical for limit-periodic Schr\"odinger operators):

\medskip

\noindent \textbf{P\"oschel's Example: A Limit-Periodic Distal Sequence.} Let $\mathcal{D}_n$ denote the set of sequences in $\ell^\infty(\bbZ)$ having period $2^n$, and $\mathcal{D} = \bigcup_n \mathcal{D}_n$; the space
\[
\mathcal{L}
=
\overline{\mathcal{D}}
\]
is a Banach algebra and a subspace of the space of all limit-periodic sequences. Let us describe how to construct a distal sequence for $\mathcal{L}$. For $j \in \bbZ_+$, define the set $B_j$ by
\[
B_j
:=
\begin{cases}
\displaystyle\bigcup_{N \in \bbZ} [N \cdot 2^j, N\cdot 2^j + 2^{j-1}), & \text{ if } j \text{ is even};\\[3.5mm]
\displaystyle\bigcup_{N \in \bbZ} [N \cdot 2^j+2^{j-1}, (N+1)\cdot 2^j), &  \text{ if } j \text{ is odd}.
\end{cases}
\]
For example, $B_1$ is precisely the set of odd integers. Denoting the indicator function of $B_j$ by $b_j = \chi_{B_j}$, define $\lambda$ by
\[
\lambda_n
=
\sum_{j=1}^\infty b_j(n) 2^{-j}.
\]
It is immediate from the definition that $\lambda \in \mathcal{L}$; moreover, the inequality
\[
\left\| \big(\lambda - S^k\lambda \big)^{-1} \right\|
\leq
16|k|
\]
for $k \neq 0$ means that $\lambda$ is distal for $\mathcal{L}$. It is not too difficult to prove the following statement:

\begin{lemma} \label{l:dyadicLanding}
For any $m \in \bbZ_+$ and any integer $0 \leq j < 2^m$, there is an integer $\ell = \ell(j,m)$ so that
\[
\lambda_k \in I_{m,j} := \left[\frac{j}{2^m}, \frac{j+1}{2^m} \right)
\iff
k \in \ell + 2^m \bbZ.
\]
\end{lemma}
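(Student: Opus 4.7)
The plan is to interpret $\lambda_n = \sum_{i \geq 1} b_i(n)2^{-i}$ as the binary expansion $0.b_1(n) b_2(n) b_3(n)\ldots$ of $\lambda_n$, which reduces the claim to showing that the first $m$ binary digits are a complete invariant of the residue class of $n$ modulo $2^m$.

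First, since each $b_i$ is $2^i$-periodic and $2^i \mid 2^m$ for $i \leq m$, the tuple $(b_1(n), \ldots, b_m(n))$ depends only on $n \bmod 2^m$. I would then check that the tail $\sum_{i > m} b_i(n) 2^{-i}$ is strictly less than $2^{-m}$ for every $n \in \bbZ$, so no boundary ambiguity arises. It is enough to exhibit one index $i > m$ with $b_i(n) = 0$: for $n \geq 0$ pick $i$ odd with $2^{i-1} > n$ (so $n \bmod 2^i = n < 2^{i-1}$ lies outside the upper-half set $B_i$), and for $n < 0$ pick $i$ even with $2^{i-1} > |n|$ (so $n \bmod 2^i = n+2^i \geq 2^{i-1}$ lies outside the lower-half set $B_i$). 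Combining these observations, $\lambda_n \in I_{m,j}$ if and only if $\sum_{i=1}^m b_i(n) 2^{m-i} = j$, that is, $(b_1(n), \ldots, b_m(n))$ is the $m$-bit binary representation of $j$ with $b_1$ the most significant bit.

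Next, I would show the map $\Phi_m : \bbZ/2^m\bbZ \to \{0,1\}^m$ sending $n$ to $(b_1(n), \ldots, b_m(n))$ is a bijection. Since both sides have cardinality $2^m$, injectivity suffices, and I would prove it by induction on $m$. The base case $m=1$ is immediate, since $b_1(n)$ distinguishes even from odd $n$. For the inductive step, the two elements $n$ and $n+2^m$ of $\bbZ/2^{m+1}\bbZ$ have the same residue mod $2^m$, so they are not separated by $(b_1, \ldots, b_m)$; however, because $B_{m+1}$ is one contiguous half of length $2^m$ in each period-$2^{m+1}$ block, translation by $2^m$ swaps that half with its complement, yielding $b_{m+1}(n+2^m) = 1 - b_{m+1}(n)$. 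Together with the inductive hypothesis this gives injectivity of $\Phi_{m+1}$.

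Finally, given the bijection $\Phi_m$, define $\ell(j,m)$ to be the unique residue class mod $2^m$ that maps under $\Phi_m$ to the $m$-bit binary representation of $j$. Then $\lambda_k \in I_{m,j}$ iff $(b_1(k), \ldots, b_m(k))$ encodes $j$ in binary iff $k \equiv \ell(j,m) \pmod{2^m}$, as required. I do not foresee any serious obstacle; the only mildly delicate point is ruling out the boundary case that the tail equals exactly $2^{-m}$, which the alternating parity of the sets $B_i$ handles cleanly.
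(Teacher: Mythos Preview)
Your argument is correct. The paper does not actually supply a proof of this lemma; it only remarks that ``it is not too difficult to prove'' and moves on. So there is nothing to compare against, and your write-up fills in exactly the details one would expect: interpreting $\lambda_n$ as a binary expansion, showing that the first $m$ digits determine and are determined by $n \bmod 2^m$, and ruling out the boundary case by exhibiting a zero digit in the tail. The inductive bijectivity argument for $\Phi_m$ is clean, and your observation that translation by $2^m$ flips $b_{m+1}$ is the key step.
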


We see that in this particular example, the Schr\"odinger operators produced by Theorem~\ref{t.poeschel} have spectrum $[0,\varepsilon^{-1}]$.

Damanik and Gan showed in \cite{DG11} that P\"oschel's results extend to the hull, that is, whenever Theorem~\ref{t.poeschel} can be applied to produce limit-periodic potentials for which the associated Schr\"odinger operator is uniformly localized, then the same statement is true, with the same constants, for all elements of the hull of the potential in question; see also \cite{H16} for a generalization of this statement.

\section{The Density of States}\label{s.ids}

Recall that we defined the density of states (DOS) measure $dk$ to be the weak limit of $dk_N$, where
\[
\int g \, dk_N
=
\frac{1}{N} \tr(P_N \, g(H) \, P_N^*),
\]
for Borel sets $B$, and $P_N$ denotes projection onto coordinates $\{0,\ldots,N-1\}$ (provided that the limit exists). In the event that $V$ is limit-periodic, we saw that the DOS exists by unique ergodicity. The accumulation function of the DOS is called the \emph{integrated density of states} (IDS) and is denoted by
\[
k(E)
=
\int \chi_{(-\infty,E]} \, dk.
\]

The results of the present section are concerned with the regularity of $k$ as a function of $E$. In full generality, $dk$ is continuous (Theorem~\ref{t:DelSou}). In light of this, it is natural to ask whether one has a quantitative modulus of continuity, for example $\alpha$-H\"older continuity for some $\alpha > 0$. In full generality, this is too ambitious, but one can wring just a bit more continuity out of the Thouless formula, as the following theorem of Craig and Simon \cite{CS83CMP} illustrates:

\begin{theorem}
For any almost-periodic potential, the integrated density of states is log-H\"older continuous. That is, there is a constant $C>0$ with the property that
\[
|k(E_1) - k(E_2)|
\leq
C (\log|E_1 - E_2|^{-1})^{-1}
\]
for all $E_1,E_2 \in \bbR$ with $|E_1 - E_2| \leq 1/2$.
\end{theorem}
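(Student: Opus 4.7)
The plan is to derive the estimate directly from the Thouless formula
\[
L(E) \;=\; \int \log|E - E'|\, dk(E'),
\]
stated just above the theorem, by pitting the trivial lower bound $L(E) \geq 0$ against an elementary splitting of the logarithmic potential. The key inputs I would assemble are: (i) $L(E) \geq 0$ for every $E \in \bbR$, since the transfer matrices $A_E(n)$ lie in $\mathrm{SL}(2,\bbR)$ and hence satisfy $\|A_E^n(\omega)\| \geq 1$; (ii) the measure $dk$ is supported on the spectrum, which lies in $[-2 - \|V\|_\infty,\, 2 + \|V\|_\infty]$, a set of some bounded diameter $D = D(\|V\|_\infty)$; and (iii) $L(E)$ is finite for every real $E$ (indeed $L(E) \leq \log(|E| + \|V\|_\infty + 2)$), which combined with the continuity of $k$ from Theorem~\ref{t:DelSou} makes the right-hand side of the Thouless formula an absolutely convergent integral for every real $E$.

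Next, I would fix $E \in \bbR$ and $0 < r \leq 1/2$, set $m := dk\bigl([E-r,E+r]\bigr)$, and split the Thouless integral:
\[
L(E) \;=\; \int_{|E - E'| < r} \log|E - E'|\, dk(E') \;+\; \int_{|E - E'| \geq r} \log|E - E'|\, dk(E').
\]
On the first region the integrand is at most $\log r$, and on the second it is at most $\log D$; since $dk$ is a probability measure, this yields
\[
L(E) \;\leq\; m \log r + \log D.
\]
Combining with $L(E) \geq 0$ and noting $\log r < 0$, I obtain $m \log(1/r) \leq \log D$, that is,
\[
dk\bigl([E-r, E+r]\bigr) \;\leq\; \frac{\log D}{\log(1/r)}.
\]

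To finish, given $E_1 < E_2$ with $0 < |E_1 - E_2| \leq 1/2$, I would take $E = (E_1 + E_2)/2$ and $r = (E_2 - E_1)/2 \leq 1/4$; then $[E_1, E_2] \subseteq [E-r, E+r]$ and the previous bound gives
\[
|k(E_1) - k(E_2)| \;\leq\; dk\bigl([E-r, E+r]\bigr) \;\leq\; \frac{\log D}{\log\bigl(2/|E_1 - E_2|\bigr)} \;\leq\; \frac{C}{\log|E_1 - E_2|^{-1}},
\]
with $C$ depending only on $\|V\|_\infty$.

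I do not foresee a substantive obstacle: the whole argument is essentially a one-line consequence of the Thouless formula and the nonnegativity of $L$. The most delicate point is merely that the split of the Thouless integral is legitimate, which requires that $\log|E-E'|$ be $dk$-integrable near $E' = E$; this is automatic since $L(E)$ is finite. It is worth noting that the argument uses almost nothing specific to almost-periodic potentials beyond the Thouless formula and the continuity of $k$, both of which hold in the general ergodic setting; what it does use crucially is the $\mathrm{SL}(2,\bbR)$ structure of the cocycle, which gives the free lower bound $L \geq 0$ and drives the whole estimate.
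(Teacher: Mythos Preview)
Your proposal is correct and follows essentially the same route as the paper: both arguments combine the Thouless formula with the nonnegativity of $L$ and the compact support of $dk$ to bound $dk$ of a short interval by $(\log D)/\log(1/r)$. The only cosmetic difference is that the paper splits the Thouless integral for $L(E_1)$ over $(E_1,E_2)$ and its complement, whereas you split symmetrically about the midpoint; one small point to tighten is that your bound $\log|E-E'|\le\log D$ on the far region should use $D+\tfrac12$ (or restrict a priori to $E$ within distance $\tfrac12$ of $\supp(dk)$, outside of which the estimate is vacuous), but this does not affect the conclusion.
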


\begin{proof}
Without loss of generality, assume $E_1 < E_2 \leq E_1 + \frac{1}{2}$. Then, since all the transfer matrices have determinant one, we have $L(E_1) \geq 0$, which leads to
\begin{align*}
0 & \le L(E_1) \\
& =
\int \log | E - E_1 | \, dk(E) \\
& =
\int_{(E_1,E_2)} \log | E - E_1 | \, dk(E) + \int_{\bbR\setminus (E_1,E_2)} \log | E - E_1 | \, dk(E)
\end{align*}
Rearranging, we get
$$
- \int_{(E_1,E_2)} \log | E - E_1 | \, dk(E) \le \int_{\bbR\setminus (E_1,E_2)} \log | E - E_1 | \, dk(E).
$$
Bounding the integrands of each side, we obtain
$$
- \log | E_2 - E_1 | \int_{(E_1,E_2)}  \, dk(E)
\le
\log ( | E_1 | + \|f\|_\infty + 2 ) \int \, dk(E),
$$
since $\supp(dk) \subseteq [-2 - \|f\|_\infty, 2 + \|f\|_\infty]$. Thus, with $C= \log(|E_1| + \|f\|_\infty + 2)$, we have
\[
|k(E_2) - k(E_1)|
=
\int_{(E_1,E_2)} dk(E)
\leq
C \left[\log|E_1 - E_2|^{-1}\right]^{-1}.
\]
\end{proof}

Within the class of limit-periodic potentials, this result is optimal; in particular, the following result of Kr\"uger and Gan shows that there is a dense set of limit-periodic potentials whose potentials are not $h$-H\"older continuous for any function $h$ that goes to zero faster than $[\log(1/\delta)]^{-1}$ \cite{KruGan}.

\begin{theorem}
Let $\Omega$ denote a Cantor group with a minimal translation $T$. There is a dense set $\mathcal{I} \subseteq C(\Omega,\bbR)$ with the property that the density of states of $V(n) = f(T^n 0 )$ is no better than log-H\"older continuous. That is, if $k$ denotes the IDS of $V$, given any increasing function $h:\bbR_+ \to \bbR_+$ with
\[
\lim_{\delta \downarrow 0} h(\delta) \log(1/\delta)
=
0,
\]
one has
\[
\limsup_{E \to E_0} \frac{|k(E) - k(E_0)|}{h(|E - E_0|)}
=
\infty
\]
for at least one $E_0$. In fact, one can guarantee that this holds for all $E_0$ in the spectrum.
\end{theorem}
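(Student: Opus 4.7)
The plan is to construct, for every $f_0 \in C(\Omega,\bbR)$ and every $\varepsilon > 0$, a sampling function $f \in C(\Omega,\bbR)$ with $\|f - f_0\|_\infty < \varepsilon$ whose associated IDS exhibits the asserted log-H\"older obstruction at every point of its spectrum; this establishes density of $\mathcal{I}$. We build $f = \lim_n f_n$ by iterating Lemma~\ref{l:lp:smallspec}: inductively select a periodic $f_n \in \CP$ of period $q_n$ (with $q_n \mid q_{n+1}$) satisfying
\[
\Leb(\Sigma(f_n)) \leq e^{-c_n q_n}, \qquad \|f_{n+1} - f_n\|_\infty \leq \eta_n,
\]
where $\sum_n \eta_n < \varepsilon$ and $\eta_n$ is chosen so small that $\|f - f_n\|_\infty \leq e^{-c_n q_n}$. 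Because Lemma~\ref{l:lp:smallspec} permits $q_{n+1}$ to be arbitrarily large at each stage, we may simultaneously arrange $c_n q_n \to \infty$. Write $V(n) = f(T^n 0)$, $V_n(n) = f_n(T^n 0)$, and let $k, k_n, \Sigma, \Sigma_n$ denote the respective IDS and spectra.

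Fix $E_0 \in \Sigma$. By Lemma~\ref{l:spec:pert}, there is $E_0^{(n)} \in \Sigma_n$ with $|E_0^{(n)} - E_0| \leq \|f - f_n\|_\infty =: \eta_n^*$; it lies in a band $[\alpha_n, \beta_n]$ of $\Sigma_n$ of width at most $\Leb(\Sigma_n) \leq e^{-c_n q_n}$, across which, by Theorem~\ref{t.periodic.ids}, $k_n$ increases by exactly $1/q_n$. Weyl's inequality for finite-volume eigenvalues, combined with weak convergence of the finite-volume IDS to $k$ and $k_n$ (both continuous by Theorem~\ref{t:DelSou}), yields the pointwise sandwich
\[
k_n(E - \eta_n^*) \leq k(E) \leq k_n(E + \eta_n^*), \quad E \in \bbR.
\]
Applying this at $\alpha_n$ and $\beta_n$ gives $k(\beta_n + \eta_n^*) - k(\alpha_n - \eta_n^*) \geq k_n(\beta_n) - k_n(\alpha_n) = 1/q_n$. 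Since $E_0 \in [\alpha_n - \eta_n^*, \beta_n + \eta_n^*]$ and $k$ is monotone, at least one endpoint $E_n$ of this interval satisfies
\[
|k(E_n) - k(E_0)| \geq \frac{1}{2 q_n}, \qquad |E_n - E_0| \leq \ell_n := (\beta_n - \alpha_n) + 2\eta_n^* \leq 2 e^{-c_n q_n}.
\]

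To conclude, let $h$ be any admissible modulus and fix $M > 0$. Assuming the construction has been arranged so that $c_n \geq c_\infty > 0$, the hypothesis $h(\ell_n) \log(1/\ell_n) \to 0$ provides, for $n$ sufficiently large, $h(\ell_n) \leq c_\infty / (4 M \log(1/\ell_n))$; combined with $\log(1/\ell_n) \geq \tfrac{1}{2} c_n q_n$ this gives
\[
\frac{|k(E_n) - k(E_0)|}{h(|E_n - E_0|)} \geq \frac{1}{2 q_n h(\ell_n)} \geq \frac{2 M \log(1/\ell_n)}{c_\infty q_n} \geq M,
\]
so $\limsup_{E \to E_0}$ of the ratio is $+\infty$. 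The main obstacle is quantitative control over the constants $c_n = c(f_n, \eta_n)$ from Lemma~\ref{l:lp:smallspec}: a priori they depend on $f_n$ and on the target perturbation size, and could degrade along the iteration. One resolves this either by inspecting the construction in Lemma~\ref{l:lp:smallspec} to extract uniform lower bounds $c_n \geq c_\infty$, or by permitting $c_n \to 0$ while compensating through sufficiently rapid growth of $q_n$ to force $\log(1/\ell_n)/q_n \to \infty$ (which suffices in the final estimate); the freedom to enlarge $q_n$ at every stage makes either route viable.
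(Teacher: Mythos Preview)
Your approach is essentially the paper's: iterate Lemma~\ref{l:lp:smallspec} to produce periodic approximants with exponentially thin spectra, use the band-mass identity \eqref{eq:per:bandmeas}, and transfer the resulting modulus-of-continuity obstruction to the limiting IDS via a Weyl-type sandwich $k_n(E-\eta_n^*)\le k(E)\le k_n(E+\eta_n^*)$. That sandwich is a clean way to make the paper's ``push the statement through'' step precise, and your handling of it (via eigenvalue perturbation plus continuity of both IDS functions) is correct.

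There is one genuine slip. Your second proposed fix for the $c_n$ issue does not work: since $\ell_n\le 2e^{-c_nq_n}$, one has $\log(1/\ell_n)/q_n\ge c_n-(\log 2)/q_n$, but also the lemma gives no reason for $\ell_n$ to be smaller than this scale, so $\log(1/\ell_n)/q_n$ is essentially pinned at $c_n$ regardless of how large you take $q_n$. Enlarging $q_n$ therefore cannot force $\log(1/\ell_n)/q_n\to\infty$; the freedom to enlarge $q_n$ is orthogonal to this ratio. What you actually need is your first route, a uniform lower bound $c_n\ge c_\infty>0$, and this is exactly what delivers the ``for all $h$'' conclusion, since it produces a sequence along which $|k(E_n)-k(E_0)|\gtrsim 1/\log(1/|E_n-E_0|)$. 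Obtaining such a uniform $c_\infty$ does require opening up the construction behind Lemma~\ref{l:lp:smallspec}; the paper's sketch leaves this implicit as well.
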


The main idea is the following: since $\CP$  is dense, start with some $f_0 \in \CP$ and let $\varepsilon>0$. Then, inductively choose $f_1,\ldots \in \CP$ using Lemma~\ref{l:lp:smallspec} so that $\|f_j - f_{j-1}\|_\infty < \varepsilon \cdot 2^{-j}$ and so that $\Leb(\Sigma(f_j))$ is exponentially small, that is
\[
\Leb(\Sigma(f_j))
\lesssim
e^{-c_j q_j}
\]
where $q_j$ is the period of $f_j$. Then, by completeness, $f_\infty = \lim f_j$ exists. The density of states corresponding to the potential function $f_j$ gives weight $1/q_j$ to an interval having length no greater than $\exp(-c_j q_j)$. By carefully tuning the rate at which $f_j \to f_\infty$, one can push a statement like this through to the DOS of $f_\infty$.
\bigskip

On the other hand, for a dense set, namely, for $f \in \mathcal{P}$, one knows that the IDS is 1/2-H\"older continuous (and no better); compare Theorem~\ref{t.periodic.ids}. It is interesting to ask whether one can do better than 1/2. In the localization regime of P\"oschel, the IDS can be shown to be 1-H\"older continuous, which was proved by Damanik and Fillman \cite{DF2018}.

\begin{theorem}
There exist limit-periodic $V$ whose associated IDS is Lipschitz-continuous.
\end{theorem}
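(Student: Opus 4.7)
The plan is to combine P\"oschel's construction with Damanik--Gan's hull extension and an ergodic covariance argument, and then exploit the dyadic structure of P\"oschel's distal sequence. I would apply Theorem~\ref{t.poeschel} to the distal sequence $\lambda$ from P\"oschel's example to produce a limit-periodic $V$ for which $H = \Delta + \varepsilon^{-1}V$ has simple eigenvalues $\{\varepsilon^{-1}\lambda_j : j \in \bbZ\}$ and eigenvectors $\psi_j$ satisfying $|\psi_j(n)|^2 \le c d^{-|j-n|}$. By Damanik--Gan~\cite{DG11}, the same uniform localization holds with the same constants for every $\omega$ in the hull $\Omega := \hull(V)$, so each $H_\omega$ has an orthonormal eigenbasis $\{\psi_j^\omega\}_{j \in \bbZ}$ with $|\psi_j^\omega(n)|^2 \le c d^{-|j-n|}$, labelled by center of localization. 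The covariance $H_{T\omega} = S H_\omega S^*$ then forces $\psi_j^\omega(n) = \psi_0^{T^j\omega}(n - j)$ and $E_j^\omega = \Lambda(T^j \omega)$, where $\Lambda(\omega) := E_0^\omega$.

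\textbf{Reduction to a one-point distribution.} The key calculation I would then do is to combine the ergodic formula for the DOS with the eigenfunction expansion:
\begin{align*}
k(E_2) - k(E_1)
&= \int_\Omega \bigl\langle \delta_0, \chi_{(E_1, E_2]}(H_\omega) \delta_0 \bigr\rangle\, d\mu(\omega) \\
&= \sum_{j \in \bbZ} \int_\Omega \chi_{(E_1, E_2]}(E_j^\omega)\, |\psi_j^\omega(0)|^2 \, d\mu(\omega).
\end{align*}
Substituting $E_j^\omega = \Lambda(T^j\omega)$ and $|\psi_j^\omega(0)|^2 = |\psi_0^{T^j\omega}(-j)|^2$ and then applying $T$-invariance of Haar measure $\mu$ to change variables in each summand turns the $j$-th term into $\int_\Omega \chi_{(E_1,E_2]}(\Lambda(\omega))\, |\psi_0^\omega(-j)|^2 \, d\mu(\omega)$. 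Summing over $j$ inside the integral, the completeness relation $\sum_j |\psi_0^\omega(-j)|^2 = \|\psi_0^\omega\|^2 = 1$ collapses the whole expression to the clean identity
\[
k(E_2) - k(E_1) \;=\; \Lambda_*\mu\bigl((E_1, E_2]\bigr).
\]
Thus the regularity of the IDS is exactly the regularity of the distribution of the scalar random variable $\Lambda$ under Haar measure.

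\textbf{Computing the pushforward.} Next I would verify that $\Lambda_*\mu$ is a constant multiple of Lebesgue measure on $[0, \varepsilon^{-1}]$. Let $f : \Omega \to [0,1]$ be the continuous sampling function generating the distal sequence, so $\lambda_n = f(T^n \omega_0)$. At each orbit point one has $\Lambda(T^j\omega_0) = E_j^{\omega_0} = \varepsilon^{-1}\lambda_j = \varepsilon^{-1} f(T^j\omega_0)$, and since both $\Lambda$ and $f$ are continuous on $\Omega$ and the orbit of $\omega_0$ is dense, this yields $\Lambda = \varepsilon^{-1}f$ everywhere on $\Omega$, so $\Lambda_*\mu$ is just a rescaling of $f_*\mu$. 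But Lemma~\ref{l:dyadicLanding} says that for each dyadic interval $I_{m,j} = [j 2^{-m},(j+1)2^{-m})$ the set $\{k : \lambda_k \in I_{m,j}\}$ is a coset of $2^m\bbZ$, hence has natural density exactly $2^{-m}= \Leb(I_{m,j})$; combined with unique ergodicity of $(\Omega, T, \mu)$ this forces $f_*\mu(I_{m,j}) = 2^{-m}$ on every dyadic interval, and hence $f_*\mu = \Leb|_{[0,1]}$. Therefore $\Lambda_*\mu = \varepsilon \cdot \Leb|_{[0, \varepsilon^{-1}]}$ and
\[
k(E_2) - k(E_1) = \varepsilon (E_2 - E_1)
\]
whenever $0 \le E_1 \le E_2 \le \varepsilon^{-1}$, which gives the desired Lipschitz bound (indeed, the IDS is linear on the spectrum).

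\textbf{Main obstacle.} The step I expect to require the most care is the continuity of the labelled eigenvalue $\Lambda : \Omega \to \bbR$ used in the pushforward paragraph. Since the spectrum $[0, \varepsilon^{-1}]$ has no gaps, $\Lambda(\omega)$ cannot be isolated by a spectral projector onto a gap; instead one must identify $\psi_0^\omega$ intrinsically as the unique (up to phase) eigenvector of $H_\omega$ whose localization profile is centered at $n=0$, and use the uniform exponential bound $|\psi_j^\omega(n)|^2 \le c d^{-|j-n|}$ together with Combes--Thomas-type estimates to show that this identification is stable under small $\ell^\infty$-perturbations of $\omega$. Once such a continuity statement is available, the rest of the argument is essentially a bookkeeping calculation.
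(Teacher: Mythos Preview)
The survey does not include a proof of this theorem; it only states the result and cites \cite{DF2018}. Your outline is essentially the argument of that paper: establish the pushforward identity $dk = \Lambda_*\mu$ via the eigenfunction expansion and the covariance change-of-variables you describe, then identify $\Lambda_*\mu$ with rescaled Lebesgue measure using Lemma~\ref{l:dyadicLanding}. Your identification of the main technical obstacle --- continuity of the center-of-localization labeling $\omega \mapsto (\psi_0^\omega, E_0^\omega)$ --- is exactly right, and this is where the uniform exponential bound from \cite{DG11} does the real work.

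Two small points worth tightening. First, distinguish the sampling function of the \emph{potential} $V$ from that of the distal sequence $\lambda$: P\"oschel's theorem produces $V$ close to but not equal to $\lambda$, so representing $\lambda$ as a continuous function on $\Omega = \hull(V)$ requires knowing that $\hull(V)$ is all of $\bbJ_2$, which holds but deserves a line. Second, the unique-ergodicity justification in the pushforward step is slightly glib because $\chi_{I_{m,j}} \circ f$ need not be continuous; the cleaner argument is that Lemma~\ref{l:dyadicLanding} identifies $f^{-1}(I_{m,j})$ with a coset of the clopen index-$2^m$ subgroup, so its Haar measure is $2^{-m}$ directly, without invoking Birkhoff averages.
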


\section{Other Families of Operators}\label{s.other}

Let us explore some models that are closely connected with, but distinct from the discrete 1D Schr\"odinger operators that we have considered thus far. For each of these families of models, many of the results from previous sections have analogs. To avoid being tedious, we will be somewhat selective with our presentation in this section, focusing on results whose analogs involve interesting challenges.

\subsection{Jacobi Matrices}

A \emph{Jacobi matrix} is an operator on $\ell^2(\bbZ)$ of the form
\[
[\sJ u]_n
=
a_{n-1} u_{n-1} + b_n u_n + a_n u_{n+1},
\quad
n \in \bbZ, \; u \in \ell^2(\bbZ),
\]
where $a_n, b_n \in \bbR$ with
\[
\sup_n |b_n| < \infty,
\quad
\sup_n a_n < \infty,
\quad
\inf_n a_n > 0.
\]
Thus, discrete Schr\"odinger operators are obtained from Jacobi matrices as a special case (in which $a_n \equiv 1$). Most of the results discussed in the foregoing section have Jacobi analogs.

A periodic Jacobi matrix is one for which the parameters $a$ and $b$ satisfy $a_{n+q} = a_n$ and $b_{n+q} = b_n$ for all $n$. Thus, a limit-periodic Jacobi matrix $\sJ$ is one for which there are periodic Jacobi matrices $\{ \sJ_j \}_{j=1}^\infty$ such that
\[
\lim_{j\to\infty} \|\sJ - \sJ_j\|
=
0,
\]
where $\| \cdot \|$ denotes the operator norm.

Jacobi matrices are not only considered simply for the sake of a more general setting (even though it is the natural generalization that still keeps many of the essential features intact; notably self-adjointness and a $2 \times 2$ transfer matrix formalism) but also because they provide the natural setting for the study of inverse spectral problems, where it is not at all clear that a solution to the given problem at hand can be found in the class of discrete Schr\"odinger operators, but where it can be shown to be solvable within the class of Jacobi matrices.

The most natural instance of this principle is the association of a spectral measure to an operator, either a discrete Schr\"odinger operators or a Jacobi matrix, on the discrete half-line (i.e., an operator acting in $\ell^2(\bbZ_+)$) and the corresponding spectral measure (associated with the cyclic vector $\delta_1$). This correspondence, which can be established via the spectral theorem when passing from operator to measure and via orthogonal polynomials or a continued fraction expansion of the Borel transform of the measure when passing from measure to operator, sets up a natural bijection between bounded Jacobi matrices and compactly supported probability measures on the real line, but it continues to be an open problem to characterize explicitly those measures that correspond to discrete Schr\"odinger operators.

More closely related to the topic of this survey, however, is the way in which Jacobi matrices arise as solutions of certain renormalization equations, which can sometimes be shown to give rise to limit-periodic coefficients. Let us describe work in this spirit by Peherstorfer, Volberg and Yuditskii \cite{PVY06}.

Suppose $T$ is an expanding polynomial and denote its real Julia set by $\mathrm{Julia}(T)$ and its degree by $d$. Recall that $\mathrm{Julia}(T)$ is the compact set of real numbers that do not go to infinity under forward iterations of $T$. Under the normalization
$$
T^{-1} : [-1,1] \to [-1,1]; \pm 1 \in \mathrm{Julia}(T)
$$
such a polynomial is well-defined by the position of its critical values
$$
\mathrm{CV}(T) = \{ t_i = T(c_i) : T'(c_i) = 0, \; c_i > c_j \text{ for } i > j \}.
$$
$T$ is \emph{expanding} (or \emph{hyperbolic}) if
$$
c_i \not\in \mathrm{Julia}(T) \quad \forall i.
$$
Then we have the following pair of theorems from \cite{PVY06}:

\begin{theorem}
Let $\sJ$ be a Jacobi matrix with spectrum contained in $[-1, 1]$. With the polynomial $T$ of degree $d$ from above, consider the renormalization equation
\begin{equation}\label{e.pvyreneq}
V^* (z - \sJ)^{-1} V = (T(z) - \tilde{\sJ})^{-1} \frac{T'(z)}{d}
\end{equation}
where
$$
V \delta_k  = \delta_{dk}
$$
for each $k \in \bbZ$. It has a solution $\sJ = \sJ(\tilde{\sJ})$ with spectrum contained in $T^{-1}([-1,1])$.

Moreover, if $\min_i |t_i| \ge 10$, then
$$
\|\sJ(\tilde{\sJ}_1) - \sJ(\tilde{\sJ}_2)\| \le \kappa \| \tilde{\sJ}_1 - \tilde{\sJ}_2\|
$$
with an absolute constant $\kappa < 1$.
\end{theorem}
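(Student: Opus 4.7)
The plan is to read the renormalization equation \eqref{e.pvyreneq} as defining $\sJ$ through its compressed resolvent data on the sparse sublattice $d\bbZ \subseteq \bbZ$, and to solve for $\sJ$ inside the class of Jacobi matrices by an explicit Schur-complement (continued-fraction) construction. First I would block $\ell^2(\bbZ)$ into consecutive windows of length $d$ so that $V$ identifies $\ell^2(\bbZ)$ isometrically with the span of the ``initial'' coordinate of each window. In this basis, any Jacobi matrix is block-tridiagonal with $d \times d$ diagonal blocks and rank-one off-diagonal blocks, and the left-hand side of \eqref{e.pvyreneq} is exactly the compression of $(z-\sJ)^{-1}$ to those initial coordinates. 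A standard Schur-complement computation then expresses this compression in terms of a reduced operator on $\Ran(V)$ obtained by eliminating the $d-1$ interior coordinates of each block.

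The next step is to solve the matching problem: choose the Jacobi parameters $a_n, b_n$ of $\sJ$ so that the Schur complement above equals $(T(z) - \tilde{\sJ})^{-1}\,T'(z)/d$. Here the expanding polynomial $T$ enters naturally: the within-block degrees of freedom of $\sJ$ can be tuned in terms of the critical values $\{t_i\}$ so as to reproduce a degree-$d$ polynomial $T$ with the stated normalization on $[-1,1]$. Existence and uniqueness of the solution then amount to the invertibility of the continued-fraction map between Jacobi parameters and Weyl $m$-functions, applied fiberwise to the pullback of the spectral data of $\tilde{\sJ}$ through $T$. Spectral containment follows by singularity analysis: poles of the left-hand side of \eqref{e.pvyreneq} sit at $\sigma(\sJ)$ while poles of the right-hand side sit at $T^{-1}(\sigma(\tilde{\sJ})) \subseteq T^{-1}([-1,1])$, and since the two sides agree as meromorphic matrix-valued functions of $z$, we conclude $\sigma(\sJ) \subseteq T^{-1}([-1,1])$.

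The main obstacle is the second statement, the Lipschitz contraction with an absolute constant $\kappa < 1$. The construction above makes $\sJ(\tilde{\sJ})$ an operator-norm analytic function of $\tilde{\sJ}$ on the convex set of self-adjoint operators with spectrum in $[-1,1]$, so the contraction reduces to an operator-norm bound on the Fr\'echet derivative of this map. The hypothesis $\min_i |t_i| \geq 10$ is exactly what supplies the contraction: pushing the critical values of $T$ far from $[-1,1]$ forces $T$ to be deeply hyperbolic on its Julia set, so $T^{-1}([-1,1])$ consists of $d$ very short intervals on which $|T'|$ is correspondingly large. Heuristically, a perturbation of size $\varepsilon$ in $\tilde{\sJ}$ translates, via \eqref{e.pvyreneq} and the factor $T'(z)/d$, into a spectral perturbation of size $\lesssim \varepsilon/\min_{T^{-1}([-1,1])}|T'|$ at the $z$-level. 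To make this rigorous I would express the Jacobi parameters of $\sJ$ as continued-fraction functionals of the Weyl $m$-functions of $\tilde{\sJ}$, differentiate with respect to $\tilde{\sJ}$, and estimate each term quantitatively using $\min_i |t_i| \geq 10$. The delicate part is controlling the coupling between adjacent size-$d$ blocks: one must show that the off-diagonal block entries in the derivative decay fast enough (again driven by the expansion of $T$) to be absorbed into a single \emph{absolute} $\kappa < 1$ that is independent of $T$ and of $\tilde{\sJ}$ within the allowed class.
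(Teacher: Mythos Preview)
The paper does not prove this theorem. It is a survey, and this result is quoted from Peherstorfer--Volberg--Yuditskii \cite{PVY06} without any argument; the paper simply states ``Then we have the following pair of theorems from \cite{PVY06}'' and moves on. So there is no ``paper's own proof'' to compare your proposal against.

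As for the proposal itself: your outline is a reasonable sketch of the strategy one finds in \cite{PVY06}, namely reading \eqref{e.pvyreneq} as a Schur-complement condition on the compressed resolvent and then extracting the Jacobi parameters of $\sJ$ from those of $\tilde{\sJ}$ via the polynomial $T$. The spectral containment heuristic (matching poles of the two sides as meromorphic functions of $z$) is the right idea. However, your proposal remains at the level of a plan rather than a proof; in particular, the contraction step is only described heuristically. The passage from ``$|T'|$ is large on $T^{-1}([-1,1])$'' to an \emph{absolute} constant $\kappa<1$ for the operator-norm Lipschitz bound requires a genuine computation that you have not carried out, and the phrase ``differentiate with respect to $\tilde{\sJ}$, and estimate each term quantitatively'' hides all of the actual work. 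If you want a complete argument you will need to consult \cite{PVY06} directly.
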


\begin{theorem}
Let us assume that $T$ is \emph{sufficiently hyperbolic} in the sense that
$$
\mathrm{dist}(\mathrm{CV}(T),[-1,1]) \ge 10.
$$
Then the renormalization equation \eqref{e.pvyreneq} has a unique fixed point. That is, there is a unique Jacobi matrix $\sJ$ such that
$$
V^* (z - \sJ)^{-1} V = (T(z) - \sJ)^{-1} \frac{T'(z)}{d}.
$$
Moreover, the coefficients of $\sJ$ are limit-periodic.
\end{theorem}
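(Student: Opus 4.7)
My plan is to derive both the existence/uniqueness of the fixed point and its limit-periodicity from the contraction estimate already supplied by the preceding theorem, together with a periodicity-preservation property of the renormalization map that I need to extract from the form of the equation \eqref{e.pvyreneq}.

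For existence and uniqueness, I would set $\Phi(\tilde{\sJ}) \defeq \sJ(\tilde{\sJ})$ and work in the metric space $\mathcal{M}$ of bounded self-adjoint Jacobi matrices with spectrum contained in $[-1,1]$, equipped with the operator-norm metric. Since $T^{-1}([-1,1]) \subseteq [-1,1]$, the previous theorem shows $\Phi$ maps $\mathcal{M}$ into itself; the hyperbolicity hypothesis $\mathrm{dist}(\mathrm{CV}(T),[-1,1]) \ge 10$ ensures $\min_i |t_i| \ge 10$, so the same theorem also gives $\|\Phi(\tilde{\sJ}_1) - \Phi(\tilde{\sJ}_2)\| \le \kappa \|\tilde{\sJ}_1 - \tilde{\sJ}_2\|$ with an absolute constant $\kappa < 1$. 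Standard arguments show $\mathcal{M}$ is closed in operator norm (the relevant tri-diagonal structure, realness, and the bound $\sup_n|a_n|,\sup_n|b_n|\le 2$ pass to operator-norm limits; moreover, since the $a_n$ are uniformly bounded below by a constant determined by $\kappa$ and any starting point in $\mathcal{M}$, a limit of iterates stays in $\mathcal{M}$). The Banach fixed-point theorem then delivers a unique $\sJ_\ast \in \mathcal{M}$ with $\Phi(\sJ_\ast) = \sJ_\ast$, which is exactly the required fixed point of \eqref{e.pvyreneq}.

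For limit-periodicity, the key lemma I would establish is the following \emph{periodicity-preservation property}: if $\tilde{\sJ}$ is $q$-periodic, then $\Phi(\tilde{\sJ})$ is $dq$-periodic. Once this is in hand, pick any periodic starting point $\sJ_0 \in \mathcal{M}$ (for instance a constant Jacobi matrix with suitable coefficients so the spectrum lies in $[-1,1]$) and define $\sJ_n \defeq \Phi^n(\sJ_0)$. By iteration, $\sJ_n$ is periodic of period $d^n q_0$, where $q_0$ is the period of $\sJ_0$. The contraction estimate yields
\begin{equation*}
\|\sJ_n - \sJ_\ast\| \le \kappa^n \|\sJ_0 - \sJ_\ast\| \to 0,
\end{equation*}
and because $|a_n - a'_n| + |b_n - b'_n| \lesssim \|\sJ - \sJ'\|$, the coefficients of $\sJ_n$ converge uniformly to those of $\sJ_\ast$. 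Thus $\sJ_\ast$ is a uniform limit of periodic Jacobi matrices, i.e.\ limit-periodic.

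The main obstacle is the periodicity-preservation lemma. To prove it, I would extract the matrix elements of $\sJ = \Phi(\tilde{\sJ})$ directly from \eqref{e.pvyreneq}. Since $V\delta_k = \delta_{dk}$, the identity
\begin{equation*}
\langle \delta_k,(z-\sJ)^{-1}\delta_m\rangle \text{ for } k,m \in d\bbZ
\end{equation*}
is determined by the resolvent of $\tilde{\sJ}$ at the point $T(z)$, and the remaining entries can be recovered by a standard Schur-complement (Feshbach) computation that solves the three-term recursion on each block $[dj+1,dj+d-1]$ using the ``outer'' data coming from $V^*(z-\sJ)^{-1}V$. If $\tilde{\sJ}$ is $q$-periodic, then the shift $S^{dq}$ commutes with $\tilde{\sJ}$ and with $V$ (since $V$ intertwines $S$ with $S^d$), so both sides of \eqref{e.pvyreneq} are invariant under conjugation by $S^{dq}$; by uniqueness in the preceding theorem, the solution $\sJ = \Phi(\tilde{\sJ})$ must commute with $S^{dq}$ and is therefore $dq$-periodic. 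This is the step that requires the most care, as one must ensure the Schur-complement reconstruction of the missing entries of $\sJ$ respects the covariance; once it does, the rest of the argument is routine.
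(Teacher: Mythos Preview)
The paper does not supply its own proof of this theorem; it is quoted from \cite{PVY06} and immediately followed only by a remark. Your outline---Banach fixed point from the contraction estimate of the preceding theorem, combined with a periodicity-preservation lemma for $\Phi$ to get limit-periodicity---is exactly the natural strategy and is essentially how the result is obtained in \cite{PVY06}. The covariance argument you sketch (using $S^dV = VS$, hence $S^qV^* = V^*S^{dq}$, and then invoking uniqueness of the solution $\sJ(\tilde\sJ)$) is the right mechanism for periodicity preservation.

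Two points require repair. First, your claim that the off-diagonals $a_n$ remain uniformly bounded below along the iteration is false: the Remark immediately following the theorem states that one must allow the off-diagonal terms to vanish, and indeed the fixed point has $a_0=0$. So $\mathcal M$ must be taken as the closed set of self-adjoint tridiagonal operators with $a_n\ge 0$ and spectrum in $[-1,1]$, not genuine Jacobi matrices with $\inf_n a_n>0$. Second, your periodicity-preservation argument hinges on \emph{uniqueness} of the solution $\sJ(\tilde\sJ)$ in the preceding theorem, which that theorem as stated does not assert explicitly; you should either extract uniqueness from the construction in \cite{PVY06} or argue directly that the conjugated matrix $S^{dq}\sJ S^{-dq}$ coincides with $\sJ$ by identifying it as the output of the same constructive procedure applied to $\tilde\sJ$.
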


\begin{remark}
To be more accurate, one has to expand the notion of a Jacobi matrix slightly for the purpose of these theorems and allow the off-diagonal terms to vanish. Indeed, the central off-diagonal element $a_0$ of the fixed point $\sJ$ is zero, and hence $J$ splits into the direct sum of two half line Jacobi matrices. It turns out that spectral measure of the Jacobi matrix corresponding to the right half line in this decomposition is the balanced (equilibrium) measure on $\mathrm{Julia}(T)$. In other words, the equilibrium measure on the Julia set of a suitable hyperbolic polynomial has orthogonal polynomials with limit-periodic recursion coefficients.
\end{remark}

\subsection{Continuum Schr\"odinger Operators}

The continuum Schr\"odinger operator acts as a self-adjoint operator in $L^2(\bbR)$ via
\begin{equation} \label{eq:contopdef}
L_V y
=
-y'' + Vy,
\end{equation}
where $V:\bbR \to \bbR$ is a sufficiently nice function; for most of the present section, $V$ will be bounded and continuous. For such $V$, \eqref{eq:contopdef} defines a self-adjoint operator in $L^2(\bbR)$ in a canonical fashion. These operators enjoy a transfer matrix formalism quite similar to the one described in the discrete setting: for each $z \in \bbC$ and $x \in \bbR$, there is an $\SL(2,\bbC)$ matrix $A_z^x(V)$ such that
\[
\begin{bmatrix}
y'(x) \\ y(x)
\end{bmatrix}
=
A_z^x(V)
\begin{bmatrix}
y'(0) \\ y(0)
\end{bmatrix}
\]
whenever $y$ satisfies $L_V y = zy$. We then consider potentials that are uniform limits of continuous periodic potentials; concretely, define
\[
\mathrm{P}(\bbR)
=
\set{V \in C(\bbR) : \text{there exists  } T>0 \text{ such that } V(x+T) \equiv V(x)}
\]
and then let $\LP(\bbR)$ denote the closure of the space of periodic potentials (with respect to the topology induced by the uniform metric):
\[
\LP(\bbR)
=
\overline{\mathrm{P}(\bbR)}^{\|\cdot\|_\infty}.
\]
In particular, every $V \in \LP(\bbR)$ is continuous and uniformly almost-periodic.

 In the event that $V \in \mathrm{P}(\bbR)$, say $V(x+T) \equiv V(x)$ for some $T>0$, the formalism from Section~\ref{ss:floq} enables one to describe the spectrum of $L_V$. Concretely, the discriminant is again given by
\[
D(z)
=
\tr \, A_z^T(V),
\]
and then the spectrum of $L_V$ is again given by
\[
\sigma(L_V)
=
\set{E \in \bbR : |D(E)| \leq 2}.
\]
On the other hand, one can also consider the analog of Section~\ref{ss:bloch}; here, we consider $L(\theta) = L_{V}(\theta)$ acting via
\begin{align*}
L_{V}(\theta) y & = -y''+Vy,\\
D(L(\theta))
& =
\set{f \in H^2([0,T]) : f(T) = e^{i\theta} f(0) \text{ and } f'(T) = e^{i\theta} f'(0)}.
\end{align*}
One can then show that $L_V(\theta)$ has compact resolvent and hence a sequence of eigenvalues $\lambda_1(\theta) \leq \lambda_2(\theta) \leq \cdots$. As before, the eigenvalues of $L_V(0)$ and $L_V(\pi)$ provide the endpoints of the spectral bands. Fixing $T = \pi$ for concreteness, we can consider $V \equiv 0$ as a $\pi$-periodic potential\footnote{Recall that the subscript in $L_V$ refers to the potential, so $L_0$ refers to the free Laplacian with $V \equiv 0$.} and explicitly compute the eigenvalues of $L_0(\theta)$ for $\theta = 0,\pi$:
\[
\sigma(L_0(0))
=
\set{(2k)^2 : k =0,1,2,\ldots},
\quad
\sigma(L_0(\pi))
=
\set{(2k+1)^2 : k = 0,1,2,\ldots}.
\]
Here, all eigenvalues are doubly degenerate with the exception of $0 \in \sigma(L_0(0))$ which is simple. Thus, (counting from $k=1$ at the bottom of the spectrum) the $k$th spectral band is $[(k-1)^2,k^2]$. So, if $V$ is then a bounded $\pi$-periodic potential, the $k$th spectral band of $L_V$ satisfies
\[
[(k-1)^2 + \|V\|_\infty, k^2 - \|V\|_\infty]
\subseteq
B_k
\subseteq
[(k-1)^2 - \|V\|_\infty, k^2 + \|V\|_\infty]
\]
by general eigenvalue perturbation theory \cite{Kato1980:PertTh}. Then, it follows that the length of the $k$th band of $L_V$, grows approximately linearly in $k$. This can be viewed as one instance of a tendency for the spectrum of $L_V$ to thicken in the high-energy region (which is not present in the discrete case). In view of this, it is quite surprising that one can beat this tendency and prove the following result, due to Damanik, Fillman, and Lukic:

\begin{theorem} \label{t:DFL}
There is a Baire-generic subset $Z \subseteq \LP(\bbR)$ with the property that $\sigma(L_{\lambda V})$ is an {\rm(}unbounded{\rm)} Cantor set of zero Lebesgue measure for all $V \in Z$ and all $\lambda > 0$. There is a dense subset $H \subseteq \LP(\bbR)$ with the property that $\sigma(L_{\lambda V})$ is an {\rm(}unbounded{\rm)} Cantor set of zero Hausdorff dimension for all $V \in H$ and all $\lambda > 0$. Moreover, for all $V \in Z$ and all $V \in H$, the spectral type of $L_V$ is purely singular continuous.
\end{theorem}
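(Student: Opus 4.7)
The plan is to transplant the discrete argument behind Theorem~\ref{t.lp.scspec.generic} into the continuum, with two substantive new features: since $\sigma(L_V)$ is unbounded, zero-measure statements must be made window-by-window in energy, and the genericity must be uniform in the coupling constant $\lambda > 0$. The argument splits as before into a zero-measure Cantor component (Baire category applied to periodic approximations with small spectrum) and an absence-of-eigenvalues component (a continuum Gordon lemma). The dense-subset claim for $H$ (zero Hausdorff dimension) is not a genericity claim and can be obtained by explicit construction, refining the approximation step with quantitative Lyapunov estimates in the spirit of Avila's strengthening noted after Theorem~\ref{t.lp.cantorspec.generic}.

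For the zero-measure step, I would first prove a continuum analog of Lemma~\ref{l:lp:smallspec}: for every $V \in \LP(\bbR)$, every $\varepsilon, M > 0$, and every compact $[a,b] \subset (0,\infty)$, there exists a periodic $V' \in \mathrm{P}(\bbR)$ with $\|V - V'\|_\infty < \varepsilon$ such that $\Leb(\sigma(L_{\lambda V'}) \cap [-M,M]) < \varepsilon$ for every $\lambda \in [a,b]$. The mechanism of the discrete proof transfers nearly verbatim: perturb $V$ to finitely many periodic pieces whose resolvent sets (within $[-M,M]$, for every $\lambda \in [a,b]$) cover the window; concatenate these pieces into long blocks, forcing the Lyapunov exponent of the resulting periodic potential to be positive uniformly on $[a,b] \times [-M,M]$; convert this Lyapunov lower bound into a band-length upper bound via the standard monodromy estimate. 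Each of the sets
\[
U_{M,\delta,a,b} = \set{V \in \LP(\bbR) : \Leb(\sigma(L_{\lambda V}) \cap [-M,M]) < \delta \text{ for every } \lambda \in [a,b]}
\]
is then open (by Lemma~\ref{l:spec:pert} applied to the bounded perturbation $\lambda(V-V')$, combined with compactness in $\lambda$) and dense. Intersecting over rational choices of $M$, $1/\delta$, $a$, and $b$ yields a dense $G_\delta$ subset $Z_0 \subseteq \LP(\bbR)$ on which $\sigma(L_{\lambda V})$ has zero Lebesgue measure for \emph{every} $\lambda > 0$. Almost-periodicity precludes isolated spectral points, so the spectrum is automatically Cantor, and zero measure precludes any absolutely continuous component.

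The absence-of-eigenvalues step uses a continuum Gordon lemma: if $V \in L^\infty(\bbR)$ agrees with a $q$-periodic function on $[-q,2q]$ up to a super-exponentially small error, then no solution of $-y'' + Vy = Ey$ decays at both $\pm\infty$, for any $E \in \bbC$. Iterating over $q_k \to \infty$ eliminates all eigenvalues. The genericity argument is then a verbatim transcription of Proposition~\ref{p.lp.gordon.generic}, using density of periodic potentials in $\LP(\bbR)$ to construct a dense $G_\delta$ set $\mathcal{G}$ of continuum Gordon potentials. Crucially, the Gordon condition $\sup_{x} |V(x) - V(x+q_k)| \le k^{-q_k}$ is stable under rescaling: for any fixed $\lambda > 0$, one has $\lambda \cdot k^{-q_k} \le (k')^{-q_k}$ for $k' \to \infty$, so $\lambda V$ remains Gordon. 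Thus $L_{\lambda V}$ has no eigenvalues for every $\lambda > 0$ whenever $V \in \mathcal{G}$, and $Z = Z_0 \cap \mathcal{G}$ produces the dense $G_\delta$ with purely singular continuous spectrum.

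The chief obstacle is the uniformity in $\lambda$ inside the small-spectrum lemma. A naive argument proves the estimate at $\lambda = 1$ only; intersecting over rational $\lambda$ then yields a dense $G_\delta$ working for rational couplings but leaving irrational ones untouched, and zero Lebesgue measure is not preserved under norm limits of the operator (the spectrum varies only upper semicontinuously). The resolution is to formulate the Lyapunov-positivity scheme driving the concatenation construction as an \emph{open} condition on the pair $(\lambda, E)$ over the compact box $[a,b] \times [-M,M]$, so that a finite cover by perturbed periodic pieces yields positivity uniformly on the box; exhausting $(0,\infty)$ by countably many compact subintervals then propagates the conclusion to every positive $\lambda$. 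The Gordon half of the argument is cleaner because the Gordon inequality already scales linearly in $V$.
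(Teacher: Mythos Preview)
Your proposal is correct and takes essentially the same approach the paper sketches (and that \cite{DFL} carries out in detail): adapt the discrete Avila argument to the continuum by working on compact energy windows to sidestep the linear band growth at high energies, intersect with a continuum Gordon set to exclude eigenvalues, and refine to zero Hausdorff dimension on a dense set via explicit construction. You have also correctly identified the $\lambda$-uniformity issue and its resolution via compact $(\lambda,E)$-boxes exhausted by countably many rational choices, which the paper's one-paragraph sketch does not mention explicitly but which is indeed how \cite{DFL} handles it.
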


The proof of Theorem~\ref{t:DFL} parallels that of \ref{t.lp.cantorspec.generic}, except that one is only able to prove suitable measure estimates in compact energy windows, since, as observed above, the Lebesgue measure of the bands grows linearly in the band label. Thus, one has to expand the compact sets on which one has effective measure estimates on the spectrum in a way that misses the lengthening effect of the high-energy region. See \cite{DFL} for more details.

\subsection{CMV Matrices and Quantum Walks}

CMV matrices arise naturally in the study of orthogonal polynomials on the unit circle (OPUC) and are universal within the class of unitary operators of spectral multiplicity one. Concretely, one starts with $\mu$, a Borel probability meaure supported on the unit circle which does not admit a support having only finitely many points. A \emph{CMV matrix} is a five-diagonal semi-infinite matrix that is determined by a sequence of \emph{Verblunsky coefficients} $\{\alpha_n\}_{n \in \bbZ_+} \subset \bbD$; these coefficients arise as the recursion coefficients of the orthogonal polynomials associated $\mu$. In terms of $\alpha_n$ and the derived quantities $\rho_n = \left( 1 - |\alpha_n|^2 \right)^{1/2}$, the CMV matrix takes the form
\begin{equation} \label{def:cmv}
\small
\mathcal C
=
\begin{bmatrix}
\overline{\alpha_0} & \overline{\alpha_1}\rho_0 & \rho_1\rho_0 &&& & \\
\rho_0 & -\overline{\alpha_1}\alpha_0 & -\rho_1 \alpha_0 &&& & \\
& \overline{\alpha_2}\rho_1 & -\overline{\alpha_2}\alpha_1 & \overline{\alpha_3} \rho_2 & \rho_3\rho_2 & & \\
& \rho_2\rho_1 & -\rho_2\alpha_1 & -\overline{\alpha_3}\alpha_2 & -\rho_3\alpha_2 &  &  \\
&&& \overline{\alpha_4} \rho_3 & -\overline{\alpha_4}\alpha_3 & \overline{\alpha_5}\rho_4 & \rho_5\rho_4 \\
&&& \rho_4\rho_3 & -\rho_4\alpha_3 & -\overline{\alpha_5}\alpha_4 & -\rho_5 \alpha_4  \\
&&&& \ddots & \ddots &  \ddots
\end{bmatrix}.
\end{equation}
This matrix defines a unitary operator in $\ell^2(\bbZ_+)$, and the spectral measure corresponding to $\mathcal C$ and the vector $\delta_0$ is given by $\mu$. This sets up a one-to-one correspondence between measures $\mu$ and coefficient sequences $\{\alpha_n\}_{n \in \bbZ_+}$, which has been extensively studied in recent years, mainly due to the infusion of ideas from Simon's monographs \cite{S1, S2}.

Similarly, an \emph{extended CMV matrix} is a unitary operator on $\ell^2(\bbZ)$ defined by a bi-infinite sequence $\{\alpha_n\}_{n \in \bbZ} \subset \bbD$ in an analogous way:
\begin{equation} \label{def:extcmv}
\small
\mathcal E
=
\begin{bmatrix}
\ddots & \ddots & \ddots &&&&&  \\
\overline{\alpha_0}\rho_{-1} & -\overline{\alpha_0}\alpha_{-1} & \overline{\alpha_1}\rho_0 & \rho_1\rho_0 &&& & \\
\rho_0\rho_{-1} & -\rho_0\alpha_{-1} & -\overline{\alpha_1}\alpha_0 & -\rho_1 \alpha_0 &&& & \\
&  & \overline{\alpha_2}\rho_1 & -\overline{\alpha_2}\alpha_1 & \overline{\alpha_3} \rho_2 & \rho_3\rho_2 & & \\
& & \rho_2\rho_1 & -\rho_2\alpha_1 & -\overline{\alpha_3}\alpha_2 & -\rho_3\alpha_2 &  &  \\
& &&& \overline{\alpha_4} \rho_3 & -\overline{\alpha_4}\alpha_3 & \overline{\alpha_5}\rho_4 & \rho_5\rho_4 \\
& &&& \rho_4\rho_3 & -\rho_4\alpha_3 & -\overline{\alpha_5}\alpha_4 & -\rho_5 \alpha_4  \\
& &&&& \ddots & \ddots &  \ddots
\end{bmatrix}.
\end{equation}
From the point of view of orthogonal polynomials, the study of $\mathcal C$ is more natural; however, when the Verblunsky coefficients are generated by an invertible ergodic map (such as a minimal translation of a compact abelian group as in the present paper), the study of $\mathcal E$ is more natural.

CMV matrices also arise in a natural fashion in the study of 1-dimensional coined quantum walks. A 1D quantum walk on $\bbZ$ is a quantum mechanical analog of a classical random walk, with two twists:
\begin{itemize}
\item The walker has an internal degree of freedom (called ``spin'') that influences her probability of hopping to the left or to the right.
\item The walker may exist as a superposition of pure states, rather than being fully localized.
\end{itemize}
The relevant state space is $\mathcal{H}_{\mathrm{QW}} = \ell^2(\bbZ)\otimes\bbC^2$; the $\ell^2(\bbZ)$ component captures the spatial position of the walker, while the $\bbC^2$ variable captures her spin. We will denote the pure states as $\delta_n^\pm = \delta_n \otimes e_\pm$, where $\{e_+,e_-\}$ denotes the usual basis of $\bbC^2$. Each quantum coin is then a superposition of two ``classical coins'', so the quantum walk is parameterized by a sequence of $2\times 2$ unitaries:
\[
Q_n
=
\begin{bmatrix}
q_n^{11} & q_n^{12} \\
q_n^{21} & q_n^{22}
\end{bmatrix}.
\]
As time advances one unit forward, the quantum walk update rule acts as follows on pure states:
\begin{align*}
U \delta_n^+
& =
q_n^{11} \delta_{n+1}^+ + q_n^{21} \delta_{n-1}^- \\
U \delta_n^-
& =
q_n^{12} \delta_{n+1}^+ + q_n^{22} \delta_{n-1}^-.
\end{align*}
Due to a seminal paper of Cantero, Gr\"unbaum, Moral, and Vel\'azquez, we know that the unitary update rule $U$ is unitarily equivalent to a CMV matrix \cite{CGMV}. Thus, any and all tools relevant to the study of CMV matrices enter the game and can be used to study 1D quantum walks.

Most of the results from the previous sections have CMV analogs (and hence analogs for quantum walks as well) -- however, owing to the more complicated structure of the CMV matrix compared to a Schr\"odinger operator, the proofs are generally more involved; see, e.g., \cite{FO2017,FOV,Ong2012}.

However, there is one notable exception to the previous remark: it is not yet known that CMV matrices having pure point spectrum are dense in the space of all limit-periodic CMV matrices. To spell things out more carefully, the Kunz--Souillard approach to localization is an essential ingredient in the arguments that proved Theorem~\ref{t.lppp1}. However, there is not a satisfactory version of the Kunz--Souillard localization proof for the CMV operators. In addition, to the best of our knowledge, no one has worked out an analog of P\"oschel's KAM scheme in the CMV setting. We would regard resolutions to either of these issues as very interesting results.

\subsection{Multidimensional Discrete Operators}

Given a bounded potential $V:\bbZ^d \to \bbR$, the discrete Schr\"odinger operator $H_V$ acts in $\ell^2(\bbZ^d)$ via
\[
[H_V u]_{\mathbf{n}}
=
V_{\mathbf{n}} u_{\mathbf{n}} + \sum_{\|\mathbf{m} - \mathbf{n}\|_1= 1} u_{\mathbf{m}}.
\]
 Given $\fp = (p_1,p_2,\ldots,p_d) \in \bbZ_+^d$, we say that a potential $V$ is $\fp$-periodic if
\[
V_{\mathbf{n}+ p_j \mathbf{e}_j}
=
V_{\mathbf{n}}
\text{ for all } \mathbf{n} \in \bbZ^d \text{ and all } 1 \le j \le d.
\]

\begin{theorem} \label{t:discBSC}
Suppose $d \geq 2$. For all $\fp = (p_1,p_2,\ldots,p_d) \in \bbZ_+^d$, there is a constant $C = C_\fp > 0$ such that the following holds true.
\begin{itemize}
\item If $V:\bbZ^d \to \bbR$ is $\fp$-periodic and $\|V\|_\infty  \leq C$, then $\sigma(H_V)$ consists of at most two connected components.
\item If at least one entry of $\fp$ is odd, $V$ is $\fp$-periodic, and $\|V\|_\infty  \leq C$, then $\sigma(H_V)$ consists of a single interval.
\end{itemize}
\end{theorem}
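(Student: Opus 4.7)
The plan is to analyze the band structure of $H_V$ via a $d$-dimensional Bloch decomposition and use first-order perturbation theory to transfer facts about the free Laplacian's bands to $H_V$ itself. First I would establish the decomposition: since $V$ is $\fp$-periodic, $H_V$ commutes with translations by $p_j \mathbf{e}_j$ for each $j$, so a Fourier transform adapted to the sublattice $\bigoplus_j p_j \bbZ$ decomposes $H_V$ as a direct integral
\[
H_V \cong \int^\oplus_{\bbT_\fp^d} H_V(\theta) \, \frac{d\theta}{(2\pi)^d}
\]
over the Brillouin zone $\bbT_\fp^d = \prod_j [0, 2\pi/p_j)$, where each $H_V(\theta)$ is a $P \times P$ Hermitian matrix and $P = \prod_j p_j$. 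Consequently, $\sigma(H_V) = \bigcup_{j=1}^P B_j^{(V)}$, where $B_j^{(V)} = \lambda_j^{(V)}(\bbT_\fp^d)$ is the range of the $j$-th sorted eigenvalue function $\lambda_j^{(V)}(\theta)$.

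For $V = 0$ the matrices $H_0(\theta)$ are jointly diagonalized by a Fourier transform within each fundamental cell, producing the explicit eigenvalues
\[
\lambda_\mathbf{k}^{(0)}(\theta) = 2 \sum_{j=1}^d \cos\!\left(\frac{\theta_j + 2\pi k_j}{p_j}\right), \quad \mathbf{k} \in \prod_j \{0, 1, \ldots, p_j - 1\},
\]
whose aggregate range is $\sigma(H_0) = [-2d, 2d]$. The crucial step is to extract a quantitative overlap property of the sorted bands: I would show that for $d \geq 2$ there exists $\delta_\fp > 0$ such that consecutive bands $B_j^{(0)}$ and $B_{j+1}^{(0)}$ overlap with margin at least $\delta_\fp$, with the sole possible exception of the central pair $B_{P/2}^{(0)}$ and $B_{P/2+1}^{(0)}$, which can only meet tangentially at $E = 0$. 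The dichotomy in the theorem enters here through the sign-flip operator $(U\psi)_\mathbf{n} = (-1)^{|\mathbf{n}|} \psi_\mathbf{n}$, which satisfies $U H_0 U^{-1} = -H_0$: when all $p_j$ are even, $U$ is compatible with the period lattice and descends to each Bloch fiber, so $\sigma(H_0(\theta)) = -\sigma(H_0(\theta))$ for every $\theta$ and hence $\lambda_{P/2}^{(0)}(\theta) \leq 0 \leq \lambda_{P/2+1}^{(0)}(\theta)$; the central bands touch at $0$ but need not overlap. When some $p_j$ is odd, $U$ does not commute with translation by $p_j \mathbf{e}_j$, the spectral symmetry of $H_0(\theta)$ is broken, and exhibiting two base points where the count $N(\theta) := |\{\mathbf{k} : \lambda_\mathbf{k}^{(0)}(\theta) < 0\}|$ differs forces at least one band function to change sign by continuity, which yields a quantitative margin $\delta_\fp' > 0$ for the central overlap as well.

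With the overlap structure of the free bands in hand, the passage to $H_V$ is routine. Since $\|H_V(\theta) - H_0(\theta)\| \leq \|V\|_\infty$ uniformly in $\theta$, Weyl's inequality gives $|\lambda_j^{(V)}(\theta) - \lambda_j^{(0)}(\theta)| \leq \|V\|_\infty$, so each $B_j^{(V)}$ lies within Hausdorff distance $\|V\|_\infty$ of $B_j^{(0)}$ and each consecutive-band overlap shrinks by at most $2\|V\|_\infty$. Taking $C_\fp = \delta_\fp/2$ in the fully even case (respectively $C_\fp = \min(\delta_\fp, \delta_\fp')/2$ when at least one $p_j$ is odd) ensures that every strict overlap present for $V=0$ survives; the corresponding bands then fuse into the same connected component, yielding at most two components in the first case and a single interval in the second.

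The main obstacle is the quantitative overlap analysis for $V=0$, that is, the production of the positive constants $\delta_\fp$ and $\delta_\fp'$. While the qualitative picture (full overlap apart from the possible bipartite touching at $0$) is visible from the explicit formula for $\lambda_\mathbf{k}^{(0)}$, converting it into $\fp$-dependent lower bounds requires careful bookkeeping of how the one-dimensional cosine ranges combine across coordinates when the bands are reordered by magnitude. This is the essence of the Bethe--Sommerfeld-style phenomenon behind the theorem: in $d \geq 2$, summing two or more cosines produces genuine (not merely tangential) overlap between the ranges of different index tuples $\mathbf{k}$, whereas in $d = 1$ the analogous bands meet only at shared endpoints, which is precisely why arbitrarily small perturbations can open every intermediate gap in the one-dimensional setting.
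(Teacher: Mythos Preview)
The paper does not supply a proof of this theorem; it merely attributes the result to Kr\"uger \cite{KrugPreprint} (the case $d=2$, $\gcd(p_1,p_2)=1$), Embree--Fillman \cite{EmbFil2018} (all periods in $d=2$), and Han--Jitomirskaya \cite{HJ} (all $d\ge 2$). So there is no ``paper's own proof'' to compare against in any detailed sense.

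That said, your outline is the right one and aligns with the strategy in the cited references: Floquet--Bloch reduction to $P\times P$ matrices over the Brillouin zone, an explicit description of the free band functions, a quantitative overlap statement for the sorted bands of $H_0$ viewed as $\fp$-periodic, and then Weyl's inequality to push this to small $V$. You have also correctly isolated the bipartite symmetry $UH_0U^{-1}=-H_0$ as the mechanism responsible for the possible splitting at $E=0$ when every $p_j$ is even, and its breaking when some $p_j$ is odd.

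You are equally right that the substance of the argument is the overlap lemma. Your sketch of the odd-period case (``find two $\theta$'s where the count of negative eigenvalues differs, then invoke continuity'') is the correct heuristic, but be aware that turning it into an honest positive lower bound $\delta_\fp'$ requires more than a single sign change: one must control the \emph{ranges} of the sorted band functions $\lambda_{P/2}^{(0)}$ and $\lambda_{P/2+1}^{(0)}$, not just locate a zero of one of them. Likewise, the claim that \emph{every} non-central consecutive pair overlaps by a uniform $\delta_\fp>0$ is the genuinely delicate combinatorial step; in the cited papers this is handled by a careful analysis of how the one-dimensional cosine ranges interlace when summed across coordinates, and it does not follow from soft continuity arguments alone. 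Your proposal is a faithful roadmap, but the terrain between ``the bands of $H_0$ fill $[-2d,2d]$'' and ``consecutive sorted bands overlap by a quantified margin'' is where essentially all of the work in \cite{EmbFil2018, HJ} resides.
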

Theorem~\ref{t:discBSC} was proved first in the special case $d=2$ when $\gcd(p_1,p_2) = 1$ by Kr\"uger \cite{KrugPreprint} (note that at least of of $p_1$ and $p_2$ must be odd in this case). This was generalized to all periods in $d=2$ by Embree and Fillman \cite{EmbFil2018} and to all $d \geq 2$ by Han and Jitomirskaya \cite{HJ}. One should view this result as a discrete analog of the Bethe--Sommerfeld conjecture:

\begin{theorem} \label{t:BSC}
If $V:\bbR^d \to \bbR$ is periodic, then $\sigma(-\nabla^2 + V)$ has only finitely many gaps.
\end{theorem}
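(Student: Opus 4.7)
The plan is to use Floquet-Bloch decomposition to reduce the statement to a band-overlap problem at high energies, and then to combine high-energy perturbation theory on a ``non-resonant'' part of the Brillouin zone with number-theoretic bounds on the resonant complement. Let $\Gamma \subset \bbR^d$ be the period lattice of $V$, let $\Gamma^*$ be its dual, and let $\mathcal{B}$ be a fundamental domain for $\Gamma^*$. For each quasi-momentum $k \in \mathcal{B}$, the operator $H(k) = -\nabla^2+V$ on $L^2(\bbR^d/\Gamma)$ with $k$-twisted boundary conditions has compact resolvent, and denoting its ordered eigenvalues by $\lambda_1(k) \le \lambda_2(k) \le \cdots$, one has
\[
\sigma(-\nabla^2+V)
=
\bigcup_{j=1}^\infty B_j,
\qquad
B_j := \{ \lambda_j(k) : k \in \mathcal{B} \}.
\]
The theorem is equivalent to the assertion that $[E_0,\infty) \subseteq \bigcup_j B_j$ for some $E_0$, i.e.\ that consecutive bands eventually overlap.

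The free case $V \equiv 0$ is instructive: the fibre eigenvalues are $|k+2\pi\gamma|^2$ for $\gamma \in \Gamma^*$, the full spectrum is $[0,\infty)$, and at energy scale $E$ there are $\sim c_d E^{d/2}$ such eigenvalues (when $d \ge 2$) compared with individual band widths of order $\sqrt E$, so the bands overlap massively. When $V$ is bounded but nonzero, the naive min-max bound $|\lambda_j(k) - \lambda_j^{(0)}(k)| \le \|V\|_\infty$ is not enough: although $\|V\|_\infty$ is tiny compared with a typical band width $\sqrt E$, a single perturbed band could in principle collapse into a small free gap of size $\sim \|V\|_\infty$ between neighbouring free bands. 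One therefore needs an asymptotic expansion of $\lambda_j(k)$ around $\lambda_j^{(0)}(k)$ with much higher precision, valid on a large portion of $\mathcal{B}$.

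The strategy at each large energy $E$ is to split $\mathcal{B}$ into a \emph{non-resonant} set $\mathcal{N}_E$ on which, for every $\gamma$ with $|k+2\pi\gamma|^2 \sim E$, the remaining free eigenvalues $|k+2\pi\gamma'|^2$ are well-separated from $|k+2\pi\gamma|^2$, and a \emph{resonant} set $\mathcal{R}_E := \mathcal{B} \setminus \mathcal{N}_E$ where near-degeneracies occur. On $\mathcal{N}_E$, standard Rayleigh-Schr\"odinger perturbation theory produces a smooth branch $\lambda_j(k)$ that tracks $\lambda_j^{(0)}(k)$ to arbitrary order in $E^{-1/2}$, so sweeping $k$ over $\mathcal{N}_E$ already generates bands that, by the free comparison, cover a full macroscopic neighbourhood of $E$. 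The hard part is the resonant set $\mathcal{R}_E$: one must show that $|\mathcal{R}_E|$ is small enough (uniformly in $E$) that the non-resonant sweep by itself already covers $[E_0,\infty)$. Since resonances correspond to pairs $\gamma_1 \ne \gamma_2 \in \Gamma^*$ with both $|k+2\pi\gamma_1|$ and $|k+2\pi\gamma_2|$ close to $\sqrt E$, controlling $|\mathcal{R}_E|$ requires delicate estimates on the number of dual-lattice points lying in thin spherical shells, in the spirit of Skriganov's work for low dimensions and Parnovski's for arbitrary $d \ge 2$. This number-theoretic shell-counting is the heart of the argument and what makes the full Bethe-Sommerfeld conjecture genuinely difficult; once such bounds are in place, they combine with the precise perturbative expansion on $\mathcal{N}_E$ to yield the desired $[E_0,\infty) \subseteq \sigma(-\nabla^2+V)$.
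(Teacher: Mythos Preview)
The paper does not prove this theorem; it merely states it and cites the literature (Skriganov, Popov--Skriganov, Karpeshina, Helffer--Mohamed, Veliev, and finally Parnovski for general $d\ge 2$). So there is no ``paper's own proof'' to compare against. Your outline is an accurate high-level description of the strategy used in those references: Floquet--Bloch reduction to band functions, a resonant/non-resonant partition of quasi-momenta at each energy scale, high-order Rayleigh--Schr\"odinger expansion on the non-resonant part, and lattice-point-in-thin-shell estimates to control the resonant part. In that sense your roadmap and the cited literature agree.

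However, what you have written is an architectural sketch, not a proof. You correctly identify the two hard ingredients---the perturbative expansion to sufficiently high order with uniform control on $\mathcal{N}_E$, and the number-theoretic bounds on $\mathcal{R}_E$---but you then defer both to Skriganov and Parnovski rather than carrying them out. Since the paper itself only cites the result, your treatment is at the same level as the paper's; but if this is meant to stand as a proof, neither ingredient is actually supplied, and each is substantial (the lattice-point counting in particular is dimension-sensitive and took decades to settle in full generality). One minor caution: the theorem as stated in the paper omits the hypothesis $d\ge 2$, which is implicit from context; your sketch correctly restricts to $d\ge 2$, as the statement is false for $d=1$.
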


Theorem~\ref{t:BSC} has a rich history with contributions from many authors, including (but certainly not limited to) \cite{HelMoh98,Karp97,PopSkr81,Skr79,Skr84,Skr85,Vel88}, and culminating in the paper of Parnovskii \cite{Parn2008AHP}.

As a consequence of Theorem~\ref{t:discBSC}, one can show that small limit-periodic operators in $\ell^2(\bbZ^d)$ for $d \geq 2$ also will have spectra comprising only one or two intervals.

\begin{coro}
Let $d\geq 2$ and $\fp_n \in \bbZ_+^d$ be such that the $j$th coordinate of $\fp_n$ divides the $j$th coordinate of $\fp_{n+1}$ for all $1 \le j \le d$ and all $n \in \bbZ_+$. Then, there exists a sequence $\delta_n > 0$ with the following property: if $V_n$ is $\fp_n$-periodic and $\|V_n\|_\infty \leq \delta_n$ for each $n$, then the limit-periodic potential
\[
V_{\mathrm{lp}}
=
\sum_{n=1}^\infty V_n
\]
is such that $\sigma(H_{V_{\mathrm{lp}}})$ consists of at most two connected components. If at least one component of $\fp_n$ is odd for every $n \in \bbZ_+$, then $\sigma(H_{V_{\mathrm{lp}}})$ is a single interval.
\end{coro}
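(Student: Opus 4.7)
The plan is to approximate $V_{\mathrm{lp}}$ by its partial sums $W_N \defeq \sum_{n=1}^N V_n$, arrange things so that Theorem~\ref{t:discBSC} applies to each $W_N$ on the scale of its own period, and then pass to the norm limit via Lemma~\ref{l:spec:pert} together with a short topological argument controlling the number of connected components under Hausdorff convergence.

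First I would do the bookkeeping on periods. Since by hypothesis each coordinate of $\fp_n$ divides the corresponding coordinate of $\fp_N$ for $n \le N$, every summand $V_n$ with $n \le N$ is $\fp_N$-periodic, hence so is $W_N$. Second, I would make the quantitative choice
\[
\delta_n \defeq 2^{-n} \min_{1 \le m \le n} C_{\fp_m},
\]
where $C_{\fp}$ is the constant furnished by Theorem~\ref{t:discBSC}. Each $\delta_n$ is strictly positive, and a telescoping estimate gives $\sum_{n=1}^N \delta_n < \min_{m \le N} C_{\fp_m} \le C_{\fp_N}$ for every $N$. Thus, for any admissible sequence $(V_n)$, we have $\|W_N\|_\infty \le C_{\fp_N}$, and Theorem~\ref{t:discBSC} guarantees that $\sigma(H_{W_N})$ consists of at most two connected components. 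If moreover every $\fp_n$ has an odd coordinate, then so does $\fp_N$, and the same theorem upgrades this to a single interval.

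Third, I would pass to the limit. Since $\|V_{\mathrm{lp}} - W_N\|_\infty \le \sum_{m > N} \delta_m \to 0$, one has $\|H_{V_{\mathrm{lp}}} - H_{W_N}\| \to 0$, and Lemma~\ref{l:spec:pert} yields $d_\Hd(\sigma(H_{W_N}), \sigma(H_{V_{\mathrm{lp}}})) \to 0$. The concluding step is an elementary topological fact: if compact sets $K_N \subset \bbR$ converge in Hausdorff distance to $K$ and each $K_N$ has at most $k$ connected components, then so does $K$. Indeed, if $K$ had $k+1$ components $C_1, \ldots, C_{k+1}$ separated by some $\delta > 0$, then for $N$ large, the inclusion $K \subseteq B_{\delta/3}(K_N)$ forces $K_N$ to meet each of the $k+1$ pairwise disjoint open sets $B_{\delta/3}(C_j)$, while $K_N \subseteq B_{\delta/3}(K) = \bigcup_j B_{\delta/3}(C_j)$ prevents $K_N$ from bridging distinct neighborhoods; hence $K_N$ would have at least $k+1$ components, contradicting the hypothesis.

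The a priori worry is that new gaps might open up in the limit, splitting a band of $H_{W_N}$ into several bands of $H_{V_{\mathrm{lp}}}$; but the topological lemma above shows that Hausdorff convergence is itself strong enough to forbid this, so no additional spectral-theoretic input is required beyond Lemma~\ref{l:spec:pert}. All of the real content is thus packaged into Theorem~\ref{t:discBSC}, and the corollary follows as a clean soft-analysis consequence once the $\delta_n$'s are calibrated to keep every periodic approximant inside the Bethe--Sommerfeld regime for its own period.
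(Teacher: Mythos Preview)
The paper states this corollary without proof, so there is no argument to compare against; your strategy---apply Theorem~\ref{t:discBSC} to each partial sum $W_N$ and pass to the Hausdorff limit using Lemma~\ref{l:spec:pert} together with the lower semicontinuity of the component count---is the natural one, and the periodicity bookkeeping and the topological lemma are both correct.

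There is, however, a genuine arithmetic error in your choice of $\delta_n$. With $\delta_n = 2^{-n}\min_{1 \le m \le n} C_{\fp_m}$, the minimum is taken over \emph{past} indices, so the early $\delta_n$ can be far too large relative to later constraints. Concretely, take $C_{\fp_1} = 100$ and $C_{\fp_2} = 1$: then $\delta_1 = 50$, and already $\delta_1 + \delta_2 > 1 = C_{\fp_2}$. Your claimed inequality $\sum_{n=1}^N \delta_n < \min_{m \le N} C_{\fp_m}$ is therefore false in general. The structural issue is that $\sum_{n \le N}\delta_n$ is nondecreasing in $N$, so the requirement $\sum_{n \le N}\delta_n \le C_{\fp_N}$ for every $N$ forces $\sum_n \delta_n \le \inf_{N} C_{\fp_N}$. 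The correct (and simpler) choice is $\delta_n = 2^{-n}\inf_{m \ge 1} C_{\fp_m}$, or any summable positive sequence whose total sum lies below that infimum; then $\|W_N\|_\infty \le \sum_n \delta_n \le C_{\fp_N}$ holds for all $N$ and the rest of your argument goes through unchanged. Note that this requires $\inf_m C_{\fp_m} > 0$, which is not asserted in the survey's statement of Theorem~\ref{t:discBSC}; you should either extract this from the underlying references \cite{EmbFil2018,HJ} or flag it as an implicit hypothesis.
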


In particular, it is substantially harder to produce Cantor spectrum in higher dimensions than in dimension one.

\section{Open Problems} \label{sec:problems}

In this section we describe a number of open problems that are suggested by the existing results. Our first set of questions can be summarized by asking whether or not any limit-periodic operator ever experiences any sort of phase transition. Concretely, whenever a result is known about a limit-periodic potential $V$, it is known that the spectral type of $H_V$ is pure. Thus far, any results that give information about the hull of $V$ show that the spectral type is constant on the hull of $V$. Finally, whenever one is able to prove a result about the one-parameter family $\{\Delta + \lambda V\}_{\lambda > 0}$, the spectral type of $H_{\lambda V}$ does not vary with $\lambda > 0$. This leads us to our first three questions:

\begin{question}
Is the spectral type of a limit-periodic operator always pure?
\end{question}

\begin{question}
Does the spectral type ever change as one passes to other elements of the hull?
\end{question}

\begin{question}
Considering a limit-periodic Schr\"odinger operator and replacing the potential by a non-trivial multiple of it (i.e., by varying the coupling constant), can the spectral type ever change?
\end{question}

\begin{question}
We know that the occurrence of Cantor spectra is a generic phenomenon in the limit-periodic universe. What about the failure of Cantor spectrum? We know that it is possible (certainly for periodic cases, but also for non-periodic cases due to P\"oschel's work \cite{P83}). Is it a dense phenomenon among the non-periodic limit-periodic cases?
\end{question}

Let us discuss the general questions above, and others, in specific settings:

\begin{question}
Suppose $V$ is limit-periodic and $\sigma_{\ac}(H_V) \neq \emptyset$.
\begin{enumerate}
\item Is it true that $H_V$ has purely a.c.\ spectrum?
\item Is it true that $H_{\lambda V}$ has purely a.c.\ spectrum for all $\lambda$?
\item Is it true that $H_W$ has purely a.c.\ spectrum for all $W \in \hull(V)$?
\end{enumerate}
\end{question}

\begin{question}
Consider a limit-periodic Schr\"odinger operator with pure point spectrum that arises via the generalized Kunz-Souillard approach presented in Section~\ref{s.pp}. Pass to a different element of the hull and/or vary the coupling constant. Does the spectral type remain pure point?
\end{question}

\begin{question}
In the setting of the previous problem (limit-periodic operators obtained via the generalized Kunz-Souillard approach), can one show that the eigenfunctions \emph{cannot} decay exponentially? In other words, can one show that the Lyapunov exponent vanishes throughout the spectrum? This would be a new phenomenon within the class of ergodic Schr\"odinger operators: eigenvalues inside the set of energies where the Lyapunov exponent vanishes.
\end{question}

\begin{question}
Consider a limit-periodic Schr\"odinger operator with pure point spectrum that arises via the P\"oschel approach presented in Section~\ref{s.pp}. Recall that this puts us in the regime of large potentials. Vary the coupling constant, and consider in particular the case where it is chosen to be small. Does the spectral type remain pure point? Does is remain constant across the hull? If so, is this due to the persistence of uniform localization?
\end{question}

\begin{question}
In the general P\"oschel approach, can one find additional examples that display a variety of features? P\"oschel provided two examples in \cite{P83}, showing that the spectrum can be a Cantor set or it can have no gaps at all. What about other sets (sets with finitely many gaps; sets that are non-trivial unions of Cantor sets and finite unions of non-degenerate intervals; sets that are Cantorvals)?
\end{question}

Speaking of the P\"oschel and Kunz-Souillard approaches, it would be interesting to work out their analogs for CMV matrices.

\begin{question}
Can the approach to limit-periodic Schr\"odinger operators from P\"oschel's paper \cite{P83} be carried over to CMV matrices?
\end{question}

\begin{question}
Can the Kunz-Souillard approach to random Schr\"odinger operators from \cite{KS80} be carried over to CMV matrices? If so, is there an extension of it analogous to \cite{DG16}, which then has similar consequences for almost-periodic CMV matrices?
\end{question}

\begin{appendix}

\section{Profinite Groups}\label{s.profinite}

Since the hull of a limit-periodic potential is a totally disconnected group, we will discuss some characteristics of totally disconnected groups. The goal of the appendix is to provide a short, self-contained proof of two results: that compact totally disconnected groups are profinite and that monothetic Cantor groups are procyclic. Let us recall that a \emph{topological group} is a Hausdorff topological space $G$ that is endowed with a group structure in such a way that the group operations (composition and inversion) are both continuous. For thorough treatments of totally disconnected groups, see \cite{ribezal, wilson}.

\subsection{Inverse Limits}

\begin{definition}
A \emph{partial order} on a set $I$ is a binary relation $\preceq$ with the following properties:
\begin{enumerate}
\item $i \preceq i$ for every $i \in I$.
\item $i = j$ whenever $i \preceq j$ and $j \preceq i$.
\item $i \preceq k$ whenever $i \preceq j$ and $j \preceq k$.
\end{enumerate}

\noindent Given a nonempty set $I$ equipped with a partial order $\preceq$, an \emph{inverse system} of topological groups over $(I,\preceq)$ is an ordered pair $( (G_i),(\varphi_i^j) )$, in which $G_i$ is a topological group for each $i \in I$ and $\varphi_i^j:G_j \to G_i$ is a continuous homomorphism whenever $i \preceq j$ such that
\begin{enumerate}
\item  $\varphi_i^i = \mathrm{id}_{G_i}$ for all $i \in I$
\item The $\varphi$'s are compatible in the sense that $\varphi_i^j \circ \varphi_j^k = \varphi_i^k$ whenever $i \preceq j \preceq k$. Equivalently, the following diagram commutes for every triple $i,j,k \in I$ for which $i \preceq j \preceq k$:
$$
\xymatrix{
G_k \ar[rd]_{\varphi_j^k} \ar[rr]^{\varphi_i^k}
&& G_i \\
& G_j \ar[ru]_{\varphi_i^j} &
}
$$
\end{enumerate}
An \emph{inverse limit} (or \emph{projective limit}) of an inverse system is a group $G$ together with continuous homomorphisms $\psi_i:G \to G_i$ with the following properties:
\begin{enumerate}
\item The maps $\psi_i$ are compatible with the $\varphi$'s in the sense that $\varphi_i^j \circ \psi_j = \psi_i$ whenever $i \preceq j$, i.e., the following diagram commutes:
$$
\xymatrix{
G \ar[rd]_{\psi_j} \ar[rrd]^{\psi_i}  & &\\
& G_j \ar[r]_{\varphi_i^j} & G_i
}
$$
\item The pair $(G,(\psi_i))$ is minimal with respect to property (1) in the following sense: if $G'$ and $\psi_i'$ are a group and a family of morphisms with $\varphi_i^j \circ \psi_j' = \psi_i'$ whenever $i \preceq j$, then there exists a unique continuous homomorphism $\theta:G' \to G$ such that $\psi_i' = \psi_i \circ \theta$ for every $i \in I$. Pictorially, the induced map $\theta$ is such that the following diagram commutes:
$$
\xymatrix{
G'
\ar@{-->}[rd]^{\theta}
\ar@/^2pc/[rrrdd]^{\psi_i'}
\ar@/_2pc/[rrdd]_{\psi_j'}
&&& \\
&
G \ar[rrd]^{\psi_i} \ar[rd]_{\psi_j}
&& \\
&&
G_j
\ar[r]_{\varphi_i^j}
&
G_i
}
$$
\end{enumerate}
\end{definition}
\begin{theorem}
Any inverse system of topological groups has an inverse limit which is unique up to isomorphism.
\end{theorem}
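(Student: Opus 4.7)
The plan is to build the inverse limit by hand as a closed subgroup of $\prod_{i \in I} G_i$ and then verify the universal property by two coordinatewise calculations. Uniqueness up to isomorphism will follow by the standard categorical argument.

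First, I would define
$$
G \defeq \set{(g_i)_{i \in I} \in \prod_{i \in I} G_i : \varphi_i^j(g_j) = g_i \text{ whenever } i \preceq j}
$$
with the subspace topology inherited from the product topology, and take each $\psi_i$ to be the restriction of the $i$th coordinate projection $\pi_i$. The compatibility relations $\varphi_i^j \circ \psi_j = \psi_i$ are then built into the definition of $G$.

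Second, I would check that $G$ is in fact a topological group and that the $\psi_i$ are continuous homomorphisms. The crucial observation is that for each pair $i \preceq j$, the set
$$
E_{i,j} = \set{g \in \prod_{k} G_k : \varphi_i^j(\pi_j(g)) = \pi_i(g)}
$$
is closed, being the preimage of the diagonal of $G_i \times G_i$ under a continuous map into a Hausdorff space. Since $G = \bigcap_{i \preceq j} E_{i,j}$, it is a closed subgroup of the topological group $\prod_k G_k$, and hence is itself a topological group in the subspace topology, with continuous projections $\psi_i$.

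Third, for the universal property, given $(G', (\psi_i'))$ satisfying the stated compatibility, I would define the induced map coordinatewise by $\theta(g') = (\psi_i'(g'))_{i \in I}$. The relations $\varphi_i^j \circ \psi_j' = \psi_i'$ immediately put $\theta(g')$ into $G$, and the universal property of the product topology delivers continuity of $\theta$ for free (each component $\pi_i \circ \theta = \psi_i'$ is continuous). The homomorphism property is componentwise, and the identities $\psi_i \circ \theta = \psi_i'$ are tautological. Uniqueness of $\theta$ is forced because $(\psi_i)$ separates points of $G$: any competitor $\theta'$ with $\psi_i \circ \theta' = \psi_i'$ must agree with $\theta$ in every coordinate. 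The uniqueness-up-to-isomorphism clause then follows from the usual diagram chase: given two inverse limits, the universal property applied in each direction yields mutually inverse continuous homomorphisms, and the uniqueness of the mediating arrow (applied to the identity maps of each limit) identifies the two compositions with the appropriate identities.

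The only step requiring genuine care is the closedness of $G$ inside $\prod_k G_k$, which is what promotes the construction from a mere group-theoretic inverse limit to a limit in the category of topological groups. Everything else is a routine verification, and the same construction carries over to any sub-category stable under products and closed subgroups (e.g.\ compact groups, abelian groups, profinite groups), which is why the inverse limit of finite groups producing the $p$-adic integers $\bbJ_p$ and the Cantor groups $\bbJ_{\mathfrak{q}}$ from the body of the paper is a compact topological group.
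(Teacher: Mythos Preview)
Your proof is correct and follows essentially the same construction as the paper: define $G$ as the compatible tuples inside $\prod_i G_i$, take the $\psi_i$ to be coordinate projections, and obtain uniqueness from the universal property. The paper compresses your steps~2--4 into the phrase ``a simple diagram chase,'' so you have simply unpacked what the paper leaves implicit. One small expository point: closedness of $G$ is not what makes $G$ a topological group (any subgroup with the subspace topology is one); closedness matters later, in Proposition~\ref{p:profinitegrps:td}, for inheriting compactness.
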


\begin{proof}
Put
\begin{equation} \label{eq:invlim:stdconstr}
G
=
\left\{
(g_i)_{i \in I} : \varphi_i^j(g_j) = g_i \text{ whenever } i \preceq j
\right\}
\subseteq
\prod_{i \in I} G_i,
\end{equation}
and let $\psi_i:G \to G_i$ denote projection onto the $i$th coordinate. A simple diagram chase shows that $(G,\psi_i)$ satisfies the definition of an inverse limit. If $G_0$ is another inverse limit, then the $\theta$'s guaranteed by the universal mapping property furnish isomorphisms between $G$ and $G_0$.
\end{proof}

We denote the inverse limit of the inverse system $( (G_i), (\varphi_i^j) )$ by
$$
G = \varprojlim G_i =  \varprojlim(G_i,\varphi_{i}^j),
$$
and we typically use the first notation, even though $G$ clearly depends on the maps $\varphi_i^j$.

\begin{remark}
Of course, we have described the construction of the categorical inverse limit in the category whose objects are topological groups and whose morphisms are continuous homomorphisms. One could just as well speak of the inverse limit of rings, modules over a PID, or topological spaces, for example, but we will not have any need for more general inverse limits.
\end{remark}

\begin{example}
Let $I$ be any nonempty set, and equip $I$ with the trivial partial order where $\alpha \prec \alpha$ for any $\alpha \in I$ and $\alpha \not\prec \beta$ for every $\alpha \neq \beta$. Then, one can verify that
\[
\varprojlim G_i
=
\prod_{i \in I} G_i;
\]
note that there is no need to specify maps $\varphi_i^j$ in this case.
\medskip

Given a prime $p \in \bbZ_+$, the additive group of $p$-adic integers is defined to be the set of formal sums:
\[
\bbJ_p
=
\set{\sum_{j=0}^\infty a_j p^j : a_j \in \{0,1,\ldots,p-1\}},
\]
where the group operation is given by ``adding with carrying.'' One can realize $\bbJ_p$ as an inverse limit of cyclic groups. To see this, let $I = \bbZ_+$ with the usual order, and let $G_k = \bbZ_{p^k}$ for $k \in \bbZ_+$. Define
$$
\varphi_k^\ell\left(n + p^\ell \bbZ \right) = n + p^k \bbZ
$$
whenever $k \leq \ell$. The inverse limit of this inverse system is canonically isomorphic to $\bbJ_p$. Specifically, if we define $\psi_k: \bbJ_p \to \bbZ_{p^k}$ by
$$
\psi_k \left( \sum_{j = 0}^\infty a_j p^j \right)
=
\left(\sum_{j=0}^{k-1} a_j p^j \right) + p^k \bbZ,
$$
one can check that $\bbJ_p$ together with this family of maps satisfies the definition of an inverse limit.
\end{example}

\begin{definition}
We say that a group $G$ is \emph{profinite} if it can be written as the inverse limit of an inverse system of finite groups and \emph{procyclic} if it can be written as the inverse limit of an inverse system of cyclic groups.
\end{definition}

\begin{prop} \label{p:profinitegrps:td}
The inverse limit of a family of compact totally disconnected groups is itself compact and totally disconnected. In particular, profinite groups are compact, Hausdorff, and totally disconnected.
\end{prop}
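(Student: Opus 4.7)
The plan is to work with the explicit construction of the inverse limit given in \eqref{eq:invlim:stdconstr}, realizing $G = \varprojlim G_i$ as a subset of the product $P = \prod_{i \in I} G_i$ equipped with the product topology, and then argue that $G$ inherits the desired properties from $P$. This reduces the problem to two tasks: showing that the product $P$ of compact totally disconnected groups is compact and totally disconnected, and showing that $G$ is a closed subgroup of $P$.

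For the first task, Tychonoff's theorem immediately yields compactness of $P$. For total disconnectedness of $P$, I would verify directly that any two distinct points can be separated by a clopen set: if $(x_i) \neq (y_i)$, pick $i_0$ with $x_{i_0} \neq y_{i_0}$, use total disconnectedness of $G_{i_0}$ to produce a clopen $U \subset G_{i_0}$ with $x_{i_0} \in U$ and $y_{i_0} \notin U$, and observe that $\pi_{i_0}^{-1}(U)$ is a clopen subset of $P$ separating the two points. This forces every connected component of $P$ to be a singleton. Since each $G_i$ is Hausdorff, so is $P$.

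For the second task, fix $i \preceq j$ and consider
\[
F_{i,j} = \{(g_k) \in P : \varphi_i^j(g_j) = g_i\}.
\]
This is the preimage of the diagonal $\Delta_{G_i} \subset G_i \times G_i$ under the continuous map $(g_k) \mapsto (\varphi_i^j(g_j), g_i)$; since $G_i$ is Hausdorff, $\Delta_{G_i}$ is closed, so $F_{i,j}$ is closed. Thus $G = \bigcap_{i \preceq j} F_{i,j}$ is closed in $P$. Closed subsets of compact spaces are compact, and subspaces of totally disconnected spaces are totally disconnected (a connected subset of $G$ is connected in $P$ and hence a singleton). That $G$ is a subgroup follows from the fact that each $\varphi_i^j$ is a homomorphism. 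This proves the first claim.

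For the \emph{in particular} statement, recall that a profinite group is by definition an inverse limit of finite groups, which we endow with the discrete topology. Finite discrete groups are obviously compact, Hausdorff, and totally disconnected, so the first part of the proposition applies. The main (very mild) obstacle in the whole argument is simply being careful that the various topological properties transfer correctly from factors to product to closed subgroup; there are no genuinely hard steps.
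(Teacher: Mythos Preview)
Your proof is correct and follows essentially the same approach as the paper's: realize the inverse limit inside the product via \eqref{eq:invlim:stdconstr}, invoke Tychonoff for compactness, and observe that total disconnectedness and Hausdorffness pass to products and subspaces while $G$ is closed in the product. The paper's proof is simply a terser version that asserts these steps without spelling out the clopen-separation argument or the diagonal-preimage description of $G$.
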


\begin{proof}
The product of compact spaces is compact by Tychonoff's theorem. It is straightforward to check that $G$ defined in \eqref{eq:invlim:stdconstr} is closed with respect to the product topology, and hence compact. Since products and subspaces of totally disconnected (respectively Hausdorff) spaces are totally disconnected (respectively Hausdorff),  the proposition follows.
\end{proof}

The remainder of the appendix will focus on the converse of the preceding proposition; that is, any totally disconnected compact group is profinite. Indeed, such a group may be naturally identified with its so-called \emph{profinite completion}.

\begin{definition}
Let $G$ be a topological group, and consider the index set
$$
I
=
\mathcal N
:=
\{ N \subseteq G : N \text{ is an open normal subgroup of } G \},
$$
partially ordered by reverse inclusion (i.e., $H \preceq K$ if and only if $H \supseteq K$). For $H \supseteq K$, define $\varphi_H^K: G/K \to G/H$ denote the natural projection, i.e.,
$$
\varphi_H^K (gK) = gH.
$$
The inverse limit of this system is called the \emph{profinite completion} of $G$, denoted
$$
\hat G
=
\varprojlim G/H.
$$
There is a canonical map $\rho: G \to \hat G$ given by
\begin{equation} \label{eq:profcomp:imbed}
\rho(g)
=
(gN)_{N \in \mathcal N} \in \prod_{N \in \mathcal N} G/N.
\end{equation}
Notice that $\hat G$ may be trivial, and hence, $\rho$ need not be injective. For example, the only open subgroup of $\bbT$ is $\bbT$ itself, so the profinite completion of $\bbT$ is trivial.
\end{definition}

\begin{theorem} \label{t:profgrap:char}
A compact topological group $G$ is profinite if and only if it is totally disconnected. Indeed, any compact totally disconnected group is isomorphic to its profinite completion.
\end{theorem}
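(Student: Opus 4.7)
The plan is to dispatch the ``only if'' direction immediately by citing Proposition~\ref{p:profinitegrps:td}, which already shows that every profinite group is compact and totally disconnected. All of the real work goes into the converse: given a compact totally disconnected group $G$, I will show that the canonical map $\rho: G \to \hat G$ defined in \eqref{eq:profcomp:imbed} is an isomorphism of topological groups. Once this is shown, $G$ is exhibited as a profinite group (the inverse limit $\hat G = \varprojlim G/N$), and the stronger ``isomorphic to its profinite completion'' statement is immediate.

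The decisive ingredient is the fact (already established in Proposition~\ref{p.small.open.subgroups} / Proposition~\ref{p:tdgroup:zd}) that in a compact totally disconnected group the collection $\mathcal N$ of open normal subgroups forms a neighborhood basis at the identity. First, I would use this to verify that $\rho$ is injective: if $\rho(g) = e$, then $g \in N$ for every $N \in \mathcal N$, but $\bigcap_{N \in \mathcal N} N = \{e\}$ by the neighborhood-basis property together with the Hausdorff axiom. Continuity of $\rho$ is automatic, since each coordinate $g \mapsto gN$ is continuous (its fibers are the cosets of the open subgroup $N$), and continuity into the product topology is tested coordinatewise.

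Next I would show that $\rho(G)$ is dense in $\hat G$. A basic open set in $\hat G$ has the form $U = \hat G \cap \prod_{N \in \mathcal N} U_N$, where $U_N = G/N$ for all but finitely many indices $N_1, \ldots, N_k$, and $U_{N_j}$ is a single coset $g_j N_j$. Given any such $U$ containing at least one element of $\hat G$, the compatibility condition $\varphi_{N_i}^{N}(gN) = gN_i$ with $N := N_1 \cap \cdots \cap N_k$ (which is again an open normal subgroup) shows that prescribing the coset of $g$ modulo $N$ forces all the $g_j N_j$. Choosing any representative $g$ of that coset gives $\rho(g) \in U$, so $\rho(G)$ meets every nonempty basic open set. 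Combined with compactness of $\rho(G)$ (continuous image of the compact group $G$) and the fact that $\hat G$ is Hausdorff, $\rho(G)$ is closed and dense, hence equal to $\hat G$.

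At this point $\rho$ is a continuous bijective homomorphism from the compact space $G$ onto the Hausdorff space $\hat G$, and such maps are automatically homeomorphisms, so $\rho$ is a topological group isomorphism. The main obstacle in the argument is the neighborhood-basis lemma, but this has been arranged to be available before the theorem; conditional on it, the rest is routine topology (continuity, density, compact-to-Hausdorff bijection). I would also note that the same scheme immediately specializes to show that a monothetic Cantor group is procyclic: if $\alpha \in G$ generates a dense cyclic subgroup, then each quotient $G/N$ is a finite cyclic group generated by $\alpha N$, so $G \cong \varprojlim G/N$ is procyclic.
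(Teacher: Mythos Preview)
Your proposal is correct and follows essentially the same route as the paper's proof: establish injectivity from $\bigcap_{N \in \mathcal N} N = \{e\}$, density of $\rho(G)$ via the finite-intersection argument on basic open sets, surjectivity from density plus compactness, and then upgrade the continuous bijection to a homeomorphism. The only cosmetic difference is that you invoke the ``continuous bijection from compact to Hausdorff is a homeomorphism'' fact directly, whereas the paper cites the Open Mapping Theorem for topological groups; these are equivalent here.
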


To prove this, we need a pair of preliminary results.

\begin{prop} \label{p:tdhausd:zd}
Any totally disconnected compact Hausdorff space is of topological dimension zero in the sense that its topology enjoys a neighborhood basis consisting of compact open sets.
\end{prop}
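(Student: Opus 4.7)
The plan is to construct, for each $x \in X$ and each open neighborhood $U$ of $x$, a clopen (closed and open) set $V$ with $x \in V \subseteq U$. Since closed subsets of the compact space $X$ are automatically compact, such a $V$ is a compact open neighborhood, and varying $U$ then yields the desired basis. The argument has two layers: a structural lemma identifying the intersection of clopen neighborhoods of $x$, and a short compactness reduction.

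The centerpiece is the \emph{quasi-component} of $x$,
\[
Q(x) \defeq \bigcap \{C \subseteq X : C \text{ clopen}, \; x \in C\}.
\]
Granting $Q(x) = \{x\}$, the construction of $V$ is immediate: the complement $X \setminus U$ is closed and hence compact, and for each $y \in X \setminus U$ there is a clopen $C_y \ni x$ with $y \notin C_y$, so the open sets $\{X \setminus C_y\}_{y \in X \setminus U}$ cover $X \setminus U$. Passing to a finite subcover $\{X \setminus C_{y_k}\}_{k=1}^n$ and setting $V = \bigcap_{k=1}^n C_{y_k}$ produces a clopen set with $x \in V \subseteq U$, as required.

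The substantive step is therefore showing $Q(x) = \{x\}$. Since $Q(x)$ always contains the connected component of $x$ and $X$ is totally disconnected, it suffices to prove that $Q(x)$ is itself connected. I would argue by contradiction: suppose $Q(x) = A \sqcup B$ with $A, B$ disjoint nonempty subsets closed in $Q(x)$, and $x \in A$. Since $Q(x)$ is closed in $X$, both $A$ and $B$ are closed in $X$, and normality of the compact Hausdorff space $X$ supplies disjoint open sets $U_A \supseteq A$ and $U_B \supseteq B$. The compact set $X \setminus (U_A \cup U_B)$ is disjoint from $Q(x)$, so the same finite-subcover maneuver as in the previous paragraph produces a single clopen $C \ni x$ with $C \subseteq U_A \cup U_B$. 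Then
\[
C \cap U_A \;=\; C \setminus U_B
\]
is simultaneously open (intersection of two open sets) and closed (intersection of the closed sets $C$ and $X \setminus U_B$), hence clopen; it contains $x$ and is disjoint from $B$, contradicting $B \subseteq Q(x)$.

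The only place any real work is done is in the connectedness of $Q(x)$, where one uses both normality and compactness together; the two other appearances of compactness (to produce $V$, and to produce the auxiliary clopen $C$) are instances of the same elementary pattern and should not present any obstacle.
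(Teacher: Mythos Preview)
Your proof is correct and essentially identical to the paper's: both define the intersection of clopen (equivalently, in a compact Hausdorff space, compact open) sets containing $x$, show it equals $\{x\}$ via the same normality-plus-finite-subcover contradiction, and then extract a clopen neighborhood inside any given open set by a second compactness reduction. The only cosmetic difference is that you name this intersection the quasi-component and phrase the key step as ``$Q(x)$ is connected,'' whereas the paper argues directly that a non-singleton $C(x)$ would be disconnected; the underlying argument is the same.
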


\begin{proof}
Suppose $X$ is a totally disconnected compact Hausdorff space, let $x \in X$ be given, and denote by $C = C(x)$ the intersection of all compact open sets containing $x$; notice that $C$ is necessarily closed, but it need not be open. First, we claim that $C = \{x\}$. If not, then total disconnectedness of $X$ implies that $C$ is disconnected, so it can be written as a disjoint union
$$
C = A \cup B,
$$
where $A$ and $B$ are disjoint, nonempty, closed subsets of $C$. Since $C$ is a closed subset of $X$, both $A$ and $B$ are closed subsets of $X$ as well. Since $X$ is compact and Hausdorff, we may choose disjoint open sets $U, V \subseteq X$ with $A \subseteq U$ and $B \subseteq V$. Since $C \subseteq U \cup V$, the \emph{complements} of the compact open sets containing $x$ comprise an open cover of $X \setminus (U \cup V)$, so, by using compactness to reduce to a finite subcover, we see that there are finitely many compact open sets $C_1,\ldots,C_n$ containing $x$ such that
$$
P
:=
\bigcap_{j=1}^n C_j
\subseteq
U \cup V.
$$
Of course, $P$ is itself compact and open. Notice that
$$
\overline{P \cap U}
\subseteq
P \cap \overline{U}
=
(U \cup V) \cap P \cap \overline{U}
=
P \cap U.
$$
Consequently, $P \cap U$ is compact, open, and contains no points of $B$. However, this contradicts the definition of $C$. Thus, $C(x) = \{x\}$.
\newline

Now, given an open set $V \subseteq X$ containing $x$, $C = \{x\}$ implies that the complements of the compact open sets containing $x$ comprise an open cover of $X \setminus V$. In particular, we may choose a finite collection $K_1, \cdots, K_n$ of compact open subsets of $X$ with
$$
x \in \bigcap_{j=1}^n K_j \subseteq V.
$$
As the intersection of finitely many compact open sets is itself compact and open, the proposition follows.
\end{proof}

\begin{prop} \label{p:tdgroup:zd} \label{p.small.open.subgroups}
Suppose $G$ is a totally disconnected compact group. Then the collection
$$
\mathcal N
=
\{N \subseteq G : N \text{ is an open normal subgroup of } G\}
$$
is a neighborhood basis for the topology of $G$ at the identity, $e$. In particular,
\begin{equation} \label{eq:openSubgrpIntersection}
\bigcap_{N \in \mathcal{N}}N
=
\{e\}.
\end{equation}
\end{prop}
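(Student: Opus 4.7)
The plan is to construct, around any open neighborhood $U$ of the identity $e$, an open \emph{normal} subgroup $N \subseteq U$. I would proceed in three stages: first produce a compact open neighborhood of $e$ inside $U$, then upgrade it to an open subgroup, and finally replace that subgroup by an open normal subgroup.

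First, by Proposition~\ref{p:tdhausd:zd} applied to the compact Hausdorff space $G$, I can fix a compact open neighborhood $K$ of $e$ with $K \subseteq U$. Next I want a symmetric open neighborhood $W$ of $e$ with $W \subseteq K$ and $KW \subseteq K$. To get this, for each $x \in K$ I use continuity of multiplication at $(x,e)$ to choose open neighborhoods $A_x \ni x$ and $B_x \ni e$ with $A_x B_x \subseteq K$; compactness of $K$ lets me extract a finite subcover $A_{x_1},\ldots,A_{x_n}$, and then $V := \bigcap_i B_{x_i}$ satisfies $KV \subseteq K$. Replacing $V$ with $W := V \cap V^{-1} \cap K$ yields the desired symmetric $W$. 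Setting $H := \bigcup_{n \ge 1} W^n$ gives a subgroup that is open (being a union of translates of $W$) and that is contained in $K$ by induction on $n$: $W \subseteq K$ and $W^{n+1} \subseteq KW \subseteq K$. Hence $H$ is an open subgroup with $H \subseteq U$.

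To finish the first assertion, I need to make $H$ normal. The decisive observation, and the main obstacle to a naive argument, is that in an arbitrary topological group the core $\bigcap_{g \in G} gHg^{-1}$ need not be open. Here compactness saves us: since $H$ is open its cosets form an open partition of the compact group $G$, so $[G:H]$ is finite. The distinct conjugates $gHg^{-1}$ are in bijection with cosets of the normalizer $N_G(H)$, so there are only finitely many of them, and
\[
N \;:=\; \bigcap_{g \in G} gHg^{-1}
\]
is therefore a finite intersection of open subgroups. Thus $N$ is open, normal by construction, and satisfies $N \subseteq H \subseteq U$. This shows $\mathcal{N}$ is a neighborhood basis at $e$.

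For the final claim \eqref{eq:openSubgrpIntersection}, suppose $g \in G$ with $g \neq e$. Since $G$ is Hausdorff, there is an open neighborhood $U$ of $e$ with $g \notin U$; by what was just proved we can find $N \in \mathcal{N}$ with $N \subseteq U$, so $g \notin N$. Therefore $\bigcap_{N \in \mathcal{N}} N = \{e\}$, completing the proof.
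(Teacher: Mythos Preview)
Your proof is correct and follows essentially the same route as the paper: use Proposition~\ref{p:tdhausd:zd} to get a compact open $K \subseteq U$, construct a symmetric neighborhood of $e$ that $K$ absorbs under multiplication, take the subgroup it generates, and then intersect the finitely many conjugates (finite because $[G:H]<\infty$). The only cosmetic differences are that you build the absorbing neighborhood via $A_xB_x \subseteq K$ at each $x \in K$ rather than the paper's $U_g^2 \subseteq g^{-1}K$, and you spell out the Hausdorff argument for \eqref{eq:openSubgrpIntersection} explicitly, whereas the paper leaves it as an immediate consequence of having a neighborhood basis at $e$.
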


\begin{proof}
By Proposition~\ref{p:tdhausd:zd}, it suffices to prove that any compact open set containing $e$ contains an open normal subgroup of $G$. To that end, suppose that $K \ni e$ is compact and open.
\newline

\noindent \textbf{Claim.} There is a symmetric neighborhood $V$ of $e$ for which $K \cdot V \subseteq K$.
\newline

For each $g \in K$, openness of $K$ and continuity of multiplication in $G$ implies that there exists an open set $U = U_g$ containing $e$ with $U_g^2 \subseteq g^{-1} \cdot K$. Here, we denote
\[
U^2
=
\set{uv : u \in U \text{ and } v \in U}.
\]
By replacing $U$ with $U \cap U^{-1}$ we may assume without loss of generality that $U_g = U_g^{-1}$ for each $g$. Since $K$ is compact, one has
$$
K \subseteq \bigcup_{j=1}^m (g_j \cdot U_{g_j})
$$
for some finite collection $g_1,\ldots,g_m \in K$. The open set $V = \bigcap_{j=1}^m U_{g_j}$ is a symmetric neighborhood of the identity with $VK \subseteq K$.
\newline

Let us see how the proposition follows from the claim. Since $V$ is symmetric, the subgroup it generates is given by
$$
N
=
\bigcup_{n = 1}^\infty V^n.
$$
Since $V$ is open, $N$ is clearly open. Moreover, $K \cdot V \subseteq K$ implies that $N \subseteq K$. By compactness, $N$ has finite index and hence there are only finitely many subgroups of $G$ conjugate to $N$. The intersection of these (finitely many) conjugate subgroups is an open normal subgroup of $G$ contained in $K$.

\end{proof}

\begin{proof}[Sketch of Proof of Theorem~\ref{t:profgrap:char}]
Let $G$ be a totally disconnected compact group and let $\rho:G \to \hat{G}$ be the homomorphism defined in \eqref{eq:profcomp:imbed}.
\medskip

\noindent \textbf{Step 1.} The image of $\rho$ is dense in $\hat{G}$.

Suppose $U \subseteq \hat G$ is open and nonempty. Without loss, we may assume that $U$ is of the form
$$
U
=
\hat G \cap \left( \prod_{N \in \mathcal N} U_N \right),
$$
where $U_N = G/N$ for all but finitely many $N \in \mathcal N$. If $N_1,\ldots,N_k$ denote the exceptional elements of $\mathcal N$, put
$$
\widetilde N
=
\bigcap_{j=1}^k N_j
$$
and choose $(g_N \cdot N)_{N \in \mathcal N} \in U$. By the compatibility conditions, $g_{\widetilde N} N_j = g_{N_j} N_j$ for each $1 \leq j \leq k$, whence $\rho(g_{\widetilde N}) \in U$.
\medskip

\noindent \textbf{Step 2.} $\rho$ is surjective.

By Step~1, the image of $\rho$ is dense. On the other hand, it must be closed by compactness of $G$ and continuity of $\rho$.
\medskip

\noindent \textbf{Step 3.}  $\rho$ is injective. This is immediate from \eqref{eq:openSubgrpIntersection}.

Thus, $\rho$ is a continuous bijective homomorphism. To see that $\rho^{-1}$ is continuous, simply apply the Open Mapping Theorem (see, e.g., \cite[Theorem~3, Chapter~1]{Morris1977:TopGrps})
\end{proof}

\begin{coro}
A compact monothetic totally disconnected group is procyclic.
\end{coro}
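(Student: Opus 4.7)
The plan is to combine Theorem~\ref{t:profgrap:char} with the monothetic hypothesis: once we know $G \cong \hat G = \varprojlim_{N \in \mathcal N} G/N$, it will suffice to show that every quotient $G/N$ appearing in this inverse system is cyclic.

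First, fix a topological generator $\alpha \in G$, so $\{\alpha^n : n \in \bbZ\}$ is dense in $G$. Let $N \in \mathcal N$ be any open normal subgroup. Since $G$ is compact and $N$ is open, the quotient $G/N$ is finite (it is both compact and discrete). The canonical projection $\pi_N : G \to G/N$ is continuous and surjective, so the image $\pi_N(\{\alpha^n : n \in \bbZ\}) = \{(\alpha N)^n : n \in \bbZ\}$ is dense in $G/N$. But $G/N$ is finite and Hausdorff, hence discrete, so this image must equal all of $G/N$. Thus $G/N$ is a cyclic group generated by $\alpha N$.

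By Theorem~\ref{t:profgrap:char}, $G$ is isomorphic to its profinite completion $\hat G = \varprojlim(G/N,\varphi_{N}^{N'})$ where $N$ ranges over $\mathcal N$ with the reverse inclusion order. Since every $G/N$ in this inverse system is cyclic, this exhibits $G$ as the inverse limit of an inverse system of cyclic groups, which is precisely the definition of procyclic.

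There is no real obstacle here beyond assembling Theorem~\ref{t:profgrap:char} and the simple observation that a dense subset of a finite discrete group is the whole group; the monothetic hypothesis is used only at this one point to ensure cyclicity of each finite quotient.
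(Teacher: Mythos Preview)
Your proof is correct and follows essentially the same approach as the paper's: both invoke Theorem~\ref{t:profgrap:char} to write $G$ as the inverse limit of its finite quotients $G/N$, and both use the topological generator $\alpha$ to show each $G/N$ is cyclic. Your argument that the image of a dense cyclic subgroup under the quotient map must be all of the finite discrete group $G/N$ is exactly what the paper means when it says $\alpha + N$ generates $G/N$ ``by minimality.''
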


\begin{proof}
Suppose $G$ is compact, monothetic and totally disconnected.  Let $\alpha$ denote a generator of a dense cyclic subgroup. If $N$ is any open subgroup of $G$, then $G/N$ is cyclic -- indeed, the coset $\alpha + N$ can be seen to generate $G/N$ by minimality. Since $G$ is isomorphic to the inverse limit of such quotients by Theorem~\ref{t:profgrap:char}, $G$ is procyclic.
\end{proof}

\end{appendix}

\end{document}